\providecommand{\tabularnewline}{\\}
\def\RSsubtxt{section~}\newref{sub}{name = \RSsubtxt}}
\def\RSthmtxt{theorem~}\newref{thm}{name = \RSthmtxt}}
\def\RSlemtxt{lemma~}\newref{lem}{name = \RSlemtxt}}
\numberwithin{equation}{section}
\numberwithin{figure}{section}
\numberwithin{table}{section}
\theoremstyle{plain}
\newtheorem{thm}{\protect\theoremname}[section]
  \theoremstyle{definition}
  \newtheorem{defn}[thm]{\protect\definitionname}
  \theoremstyle{plain}
  \newtheorem*{thm*}{\protect\theoremname}
  \theoremstyle{plain}
  \newtheorem{lem}[thm]{\protect\lemmaname}
  \theoremstyle{remark}
  \newtheorem{rem}[thm]{\protect\remarkname}
  \theoremstyle{plain}
  \newtheorem{prop}[thm]{\protect\propositionname}
  \theoremstyle{definition}
  \newtheorem{example}[thm]{\protect\examplename}
  \theoremstyle{plain}
  \newtheorem*{lem*}{\protect\lemmaname}
  \theoremstyle{plain}
  \newtheorem{cor}[thm]{\protect\corollaryname}
  \theoremstyle{definition}
  \newtheorem*{example*}{\protect\examplename}
  \providecommand{\corollaryname}{Corollary}
  \providecommand{\definitionname}{Definition}
  \providecommand{\examplename}{Example}
  \providecommand{\lemmaname}{Lemma}
  \providecommand{\propositionname}{Proposition}
  \providecommand{\remarkname}{Remark}
  \providecommand{\theoremname}{Theorem}
\providecommand{\theoremname}{Theorem}
\begin{document}
\global\long\def\sp{\mbox{Span}_{F}}
\global\long\def\Aut{\mbox{Aut}}
\global\long\def\Hom{\mbox{Hom}_{F}}
\global\long\def\Tr{\mbox{Tr}}
$ $\global\long\def\End{\mbox{End}}
\global\long\def\htt{\mbox{ht}}
\global\long\def\Cent{\mbox{Cent}}

\title{On the Codimension Sequence of $G$-Simple Algebras}

\author{Yakov Karasik }

\author{Yuval Shpigelman}

\address{Department of Mathematics, Technion - Israel Institute of Technology,
Haifa 32000, Israel.}

\email{yuvalshp 'at' tx.technion.ac.il (Y. Shpigelman),}

\email{yaakov 'at' tx.technion.ac.il (Y. Karasik).}

\keywords{graded algebras, polynomial identities, invariant theory, representation
theory, Hilbert series, codimension.}
\begin{abstract}
Let $F$ be an algebraically closed field of characteristic zero and
let $G$ be a finite group.\foreignlanguage{american}{ In this paper
we will show that the asymptotics of $c_{n}^{G}(A)$, the $G$-graded
codimension sequence of a finite dimensional $G$-simple $F$-algebra
$A$, has the form $\alpha n^{\frac{1-\dim_{F}A_{e}}{2}}(\dim_{F}A)^{n}$
(as conjectured by E.Aljadeff, D.Haile and M. Natapov), where $\alpha$
is some positive real number and $A_{e}$ denotes the identity component
of $A$. In the special case where $A$ is the algebra of $m\times m$
matrices with an arbitrary elementary $G$-grading we succeeded in
calculating the constant $\alpha$ explicitly.}
\end{abstract}

\maketitle
\selectlanguage{american}%
\tableofcontents{}

\selectlanguage{english}%

\section*{Introduction}

\selectlanguage{american}%
Let $A$ be an associative algebra PI algebra over a field $F$ of
characteristc zero. The $T$-ideal of polynomial identities $Id(A)$
(inside the free algebra $F\left\langle X\right\rangle $ over $F$
on a countable set of variables) is an important invariant of $A$
which attracted a lot of attention in the last 60 years or so. For
instance, it plays an important role in the study of growth properties
of $A$ \foreignlanguage{english}{(see e.g. \cite{key-15,key-19,key-20,key-10})}
or in its representation theory (for instance it is known that any
relatively free algebra is representable, see \foreignlanguage{english}{\cite{key-13,key-14}}).
It turns out however, that the precise knowledge of $Id(A)$ where
$A$ is finite dimensional is an extremely hard task. Of course, knowing
a set of generators of $Id(A)$ (it is known by a famous result of
Kemer that any $T$-ideal is finitely generated) would be a major
step ahead but even then it is not clear how to determine explicitly
whether a polynomial is or is not generated by the given set). With
this point of view it is natural (and many times more effective) to
study general invariants attached to $T$-ideals.

One of the most fruitful invariants of this sort (introduced by Regev
in \cite{key-46}) is the codimension sequence attached to a $T$-ideal
of identities and its asymptotic behavior. Let us recall its definition.
Let $Id(A)\subseteq F\left\langle X\right\rangle $ be a $T$-ideal
as above ($X={\{x_{1},x_{2},....\}}$) and let $P_{n}$ denote the
$n!$ dimensional $F$-space spanned by all permutations of the (multilinear)
monomial $x_{1}x_{2}\cdot\cdot\cdot x_{n}$. Define the $n$-th coefficient
$c_{n}(A)$ of the codimension sequence of the $F$-algebra $A$,
by $c_{n}(A)=\dim_{F}(P_{n}/(P_{n}\cap Id(A))$. 

It turns out that computing explicitly the codimension sequence of
an algebra $A$ is also very difficult. Indeed, there are only few
examples of algebras whose codimension sequence is known explicitly.
It is more feasible to study the asymptotics of the codimension sequence
and its exponential component (see definition below). We have collected
some of the main results regarding these invariants. 

\selectlanguage{english}%
Let $A$ be a PI $F$-algebra. It is known (Regev, \cite{key-46})
that the sequence of codimensions is exponentially bounded. Moreover,
a key result of Giambruno and Zaicev says that $\lim\,\sqrt[n]{c_{n}(A)}$
exists, and is a non-negative integer.\foreignlanguage{american}{
We denote the limit by $\exp(A)$ and refer to it as the \emph{exponent}
of $A$. In fact, Giambruno and Zaicev showed that in the case that
the algebra $A$ is finite dimensional over an algebraically closed
field $F$ of zero characteristic, $\exp(A)$ may be interpreted as
the dimension of a suitable semisimple subalgebra of $A$. Following
a different track Berele and Regev (see \cite{key-18E,key-19E}) found
new invariants related to the codimension sequence by proving that
the asymptotics of the codimension sequence of any algebra with $1$
is of the form $\alpha n^{b}d^{n}$, where $\alpha$ is a positive
real number, $b$ is a half integer and $d=\exp(A)$. We refer to
the constants $\alpha$, $b$ and $d$ as the constant part, polynomial
part and exponential factor of the asymptotics respectively. As mentioned
above, the exponential part $d=\exp(A)$ has }a satisfactory interpretation.
On the other hand the polynomial part (and even more the constant
part) are more subtle invariants and in particular no such interpretation
is known.

\selectlanguage{american}%
An important example which was intensively studied by means of PI
theory is the algebra of $m\times m$ matrices over a field $F$ of
characteristic zero. As for general finite dimensional algebras very
little is known about (finite) generating sets of $Id(A)$ (except
the case $m=2$). However the asymptotics of the codimension sequence
was remarkably computed by Regev \cite{key-Regev} using results of
Formanek\cite{key-Formanek}, Procesi\cite{key-Procesi} and Razmyslov\cite{key-Raz}
about invariant theory and the theory of Hilbert series. \foreignlanguage{english}{In
this paper, }we extract from their work a program for calculating
the codimension asymptotic of finite dimensional algebras. The basic
idea is explained in \secref{Motivation}. This program might be considered
as a step towards finding an interpretation of the polynomial and
constant part of the codimension sequence of finite dimensional algebras.
Nevertheless, it is still seems very challenging to tackle (finite
dimensional) algebras with a non zero radical. The compromise is to
change the framework obtaining a rich family of examples which are
yet manageable. More specifically, we work with algebras graded by
a finite group $G$ and consider their $G$-graded polynomial identities
and $G$-graded codimension sequence. Let us recall the necessary
definitions.

\selectlanguage{english}%
Suppose now $A$ is a $G$-graded algebra, where $G$ is a finite
group. Let $X^{G}$ be a countable set of variables $\left\{ x_{i,g}:g\in G;i\in N\right\} $
and let $F\left\langle X^{G}\right\rangle $ be the free algebra on
the set $X^{G}$. Clearly, the algebra $F\left\langle X^{G}\right\rangle $
is $G$-graded in a natural way (the degree of a monomial is the element
of $G$ which is equal to the product of the homogeneous degrees of
its variables). Given a polynomial in $F\left\langle X^{G}\right\rangle $
we say that it is a $G$-graded identity of $A$ if it vanishes upon
any admissible evaluation of $A$, that is, a variable $x_{i,g}$
assumes values only from the corresponding homogeneous component $A_{g}$.
The set of all $G$-graded identities is an ideal in the free $G$-graded
algebra which we denote by $Id^{G}(A)$. Moreover, $Id^{G}(A)$ is
a $T$-ideal, namely, it is closed under $G$-graded endomorphisms
of $F\left\langle X^{G}\right\rangle $. In an analogy to the ungraded
case, we let\foreignlanguage{american}{ $P_{n}^{G}$ denote the $|G|^{n}\cdot n!$
dimensional $F$-space spanned by all permutations of the (multilinear)
monomial $x_{1,g_{1}},x_{2,g_{2}}\cdot\cdot\cdot x_{n,g_{n}}$, where
$g_{i}\in G$, and define the $n$-th coefficient $c_{n}^{G}(A)$
of the codimension sequence of a $G-$graded algebra $A$ by $c_{n}^{G}(A)=\dim_{F}(P_{n}^{G}/(P_{n}^{G}\cap Id^{G}(A))$.}
As in the ungraded case Aljadeff, Giambruno and La-Matina, in different
collaborations (see \cite{key-A+G+L,key-G+L,key-A+G}) showed that
$\lim\sqrt[n]{c_{n}^{G}(A)}$ exists and is a non-negative integer
denoted by $\exp^{G}(A)$. Moreover, they showed that if $A$ is finite
dimensional, then there exists a $G$-graded semisimple subalgebra
whose dimension interprets $\exp^{G}(A)$. It was also conjectured
that the asymptotics of the codimension sequence has the same structure
as in the ungraded case (constant $\times$ polynomial part $\times$
exponential part), and indeed the second author of this article proved
this in \cite{Shpigelman} for affine $G$-graded algebras with 1.

When considering finite dimensional $G$-simple algebras (over an
algebraically closed field $F$ with characteristic $0$) two different
examples arise. 
\begin{enumerate}
\item \textbf{Fine grading}: Let $A$ be a twisted group algebra $A=F^{\mu}G$,
where $\mu$ is a 2-cocycle of $G$ and the grading is given by $A_{g}=Fb_{g}$.
This is indeed a $G$-simple algebra since every homogenous component
contains an invertible element. Notice that each component is one
dimensional. 
\item \textbf{Elementary grading}:\textbf{ }Now assume $A$ is the matrix
algebra $A=M_{m}(F)$. Consider a vector $\mathfrak{g}=(\gamma_{1},...,\gamma_{m})\in G^{\times m}$
and define a grading on $A$ by declaring $A_{g}=\mbox{Span}_{F}\{e_{i,j}|g=\gamma_{i}^{-1}\gamma_{j}\}$,
where $e_{i.j}$ are the elementary matrices. The resulting $G$-graded
algebra is (ungraded) simple, so surely $G$-graded simple. 
\end{enumerate}
In \cite{key-5} it is shown that, in fact, every $G$-simple finite
dimensional $F$-algebra $A$ is a ``combination'' of the two examples
above (see also \secref{Review-of-simple} for the precise statement). 

A major part of this article (sections \ref{sec:Invariants},\ref{sec:Hilbert-series}
and \ref{sec:SI=00003DC}) is devoted to finding the asymptotics of
the codimension sequence of the matrix algebra $A=M_{m}(F)$ graded
elementary.
\begin{defn}
Suppose $a_{n}$ and $b_{n}$ are two series of real positive numbers.
We write $a_{n}\sim b_{n}$ if 
\[
\lim_{n\to\infty}\frac{a_{n}}{b_{n}}=1.
\]

\end{defn}
The main result is:
\begin{thm*}[\textbf{A}]
 Let $A$ be the $F$-algebra of $m\times m$ matrices with elementary
$G$-grading, and let $\mathfrak{g}=(\gamma_{1},...,\gamma_{m})$
be the grading vector where all the $\gamma_{i}$'s are distinct.
Then 
\[
c_{n}^{G}(A)\sim\alpha n^{\frac{1-\dim_{F}A_{e}}{2}}(\dim_{F}A)^{n}
\]
where
\[
\alpha=\frac{1}{|H_{\mathfrak{g}}|}m^{\frac{1-\dim_{F}A_{e}}{2}+2}\left(\frac{1}{\sqrt{2\pi}}\right)^{m-1}\left(\frac{1}{2}\right)^{\frac{1-\dim_{F}A_{e}}{2}}\prod_{i=1}^{k}\left(1!2!\cdots(m_{i}-1)!m_{i}^{-\frac{1}{2}}\right).
\]
The subgroup $H_{\mathfrak{g}}\leq G$ is defined in \secref{Invariants}
and $m_{1},...,m_{k}$ are the non-zero multiplicities of distinct
elements of the set $G/H$ inside the vector $(\gamma_{1}H,....,\gamma_{m}H)$.
\end{thm*}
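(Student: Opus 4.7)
The plan is to adapt the Regev--Formanek--Procesi--Razmyslov program outlined in Section~\ref{sec:Motivation} to the elementary $G$-graded setting. First, I would realise the relatively free $G$-graded algebra of $A=M_{m}(F)$ as the algebra of \emph{graded generic matrices}: to each variable $x_{i,g}$ attach a generic matrix $\xi_{i,g}\in M_{m}(F[Y])$ with an independent commuting indeterminate in the $(j,k)$-entry whenever $\gamma_{j}^{-1}\gamma_{k}=g$, and zero elsewhere. By the trace identification of Procesi, the $F$-space $P_{n}^{G}/(P_{n}^{G}\cap Id^{G}(A))$ is then isomorphic to the multilinear component (in $n$ batches of variables, each of total degree one) of a certain ring of invariants of the subgroup $\Gamma\le GL_{m}$ whose conjugation action preserves the elementary grading.

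The group $\Gamma$ is determined in Section~\ref{sec:Invariants}: modulo scalars it is the semidirect product $(GL_{m_{1}}\times\cdots\times GL_{m_{k}})\rtimes H_{\mathfrak{g}}$, where the $m_{i}$ record the multiplicities of the cosets $\gamma_{i}H_{\mathfrak{g}}$ in $\mathfrak{g}$ and $H_{\mathfrak{g}}$ permutes index blocks of equal size. Combining this with the graded form of the identity $SI=C$ proved in Section~\ref{sec:SI=00003DC}, one expresses $c_{n}^{G}(A)$ as the coefficient of the fully multilinear weight inside the Hilbert series of the $\Gamma$-invariant polynomial functions on $\bigoplus_{g}A_{g}^{\oplus n}$. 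Section~\ref{sec:Hilbert-series} computes this Hilbert series via the Weyl integration formula for the maximal compact subgroup $U_{m_{1}}\times\cdots\times U_{m_{k}}$, leading to an integral representation of the shape
\[
c_{n}^{G}(A)=\frac{1}{|H_{\mathfrak{g}}|}\oint_{(S^{1})^{m}}\Phi(z)^{n}\,\prod_{i=1}^{k}\bigl|\Delta_{m_{i}}(\mathbf{z}^{(i)})\bigr|^{2}\,\frac{dz_{1}\cdots dz_{m}}{(2\pi\sqrt{-1})^{m}},
\]
where $\Phi$ is a Laurent polynomial encoding the graded matrix trace and $\Delta_{m_{i}}$ is the Vandermonde in the $i$-th block of torus variables.

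To pass from this integral to the asymptotics I would apply a multidimensional Laplace / saddle-point analysis. The modulus $|\Phi|$ attains its global maximum $m=\sqrt{\dim_{F}A}$ (up to the overall torus symmetry) on the ``diagonal'' stratum where all the $z_{j}$ inside each block coincide, immediately giving the exponential factor $(\dim_{F}A)^{n}=m^{2n}$. Expanding $\log\Phi$ to second order transverse to this critical stratum, the Hessian has rank $\dim_{F}A_{e}-1$ (the missing direction being the overall scaling), so the Gaussian contribution produces the polynomial factor $n^{(1-\dim_{F}A_{e})/2}$ together with a determinantal prefactor responsible for the $m^{(1-\dim_{F}A_{e})/2+2}$, $(2\pi)^{-(m-1)/2}$ and $(1/2)^{(1-\dim_{F}A_{e})/2}$ pieces of $\alpha$.

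The main obstacle is the precise evaluation of the constant. Once the integral is localised around the critical stratum, the Vandermonde factors $|\Delta_{m_{i}}|^{2}$ integrated against the block Gaussian give, by the Mehta/Selberg integral, the product $\prod_{i=1}^{k}1!\,2!\cdots(m_{i}-1)!$ up to explicit powers of $2\pi$; the normalisation of the Weyl measure on each block contributes the $m_{i}^{-1/2}$ factors. The surviving prefactor $|H_{\mathfrak{g}}|^{-1}$ reflects that $H_{\mathfrak{g}}$ acts freely on the critical torus by permuting equal-sized blocks, so the Weyl integral overcounts $\Gamma$-orbits by exactly $|H_{\mathfrak{g}}|$. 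Collecting these pieces yields the value of $\alpha$ displayed in Theorem~A; the bookkeeping of the powers of $m$, $2\pi$ and $1/2$ is tedious but mechanical once the saddle-point expansion is in place.
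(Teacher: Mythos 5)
Your outline reproduces the overall architecture of the paper (graded generic matrices, identification of $\widetilde{G}$ as $\left(GL_{m_{1}}\times\cdots\times GL_{m_{k}}\right)\ltimes H_{\mathfrak{g}}$ modulo scalars, reduction to the dimension of the invariant space $T_{n+1}^{\widetilde{G}}$), and your method for the final asymptotic evaluation is genuinely different: you propose a Weyl-integration/saddle-point analysis with Mehta--Selberg integrals, whereas the paper reduces $t_{n}^{G}(A)$ to the combinatorial sum $\frac{1}{|H_{\mathfrak{g}}|}\sum\binom{n}{n_{1},\ldots,n_{k}}^{2}t_{n_{1}}(m_{1})\cdots t_{n_{k}}(m_{k})$ and invokes Regev's known asymptotics for $t_{l}(s)$ together with the Beckner--Regev theorem on powers of multinomial distributions. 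Your analytic route is viable in principle, but one bookkeeping claim is wrong as stated: the Hessian of $\log|\Phi|$ on the $m$-dimensional torus cannot have rank $\dim_{F}A_{e}-1=\sum m_{i}^{2}-1$, which exceeds $m$ in general. The correct accounting is that the Hessian transverse to the critical circle has rank $m-1$, and the remaining decay $n^{-\sum m_{i}(m_{i}-1)/2}$ comes from the vanishing of the weight $\prod|\Delta_{m_{i}}|^{2}$ on the critical stratum; these happen to combine to the right exponent, but your attribution of the whole polynomial factor to the Gaussian is incorrect and would need repair before the constant could be trusted.

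The genuine gap is the step you dispose of in one clause: ``combining this with the graded form of the identity $SI=C$.'' The trace identification only \emph{embeds} $C_{n}^{G}(A)$ into $T_{n+1}^{\widetilde{G}}$, so your integral a priori yields only an upper bound on $c_{n}^{G}(A)$; the invariant ring contains products of traces that do not come from the relatively free algebra. Proving that the codimensions are nevertheless asymptotically equal to the invariant dimensions is the heart of the matter and does not follow from any known ``graded form'' of Regev's ungraded theorem. In the paper this occupies two full sections: one must produce a Formanek-type central element $\mathcal{I}$ with $\mathcal{I}\cdot RI_{n}\subseteq RSI_{n}$, compare Hilbert series to show the Schur coefficients $a_{\lambda}$ and $a'_{\lambda}$ agree whenever $\lambda_{m^{2}}\geq2$ (the rest being exponentially negligible), and then introduce the \emph{complete} and \emph{in-order} invariants to embed a dominant subspace $COSI_{n}$ of $SI_{n}$ into $\Tr(C_{n}^{G})$ --- the in-order condition being exactly what is needed to control the extra summation over $[H_{\mathbf{\tilde h}}:H_{\mathfrak{g}}]$ appearing in the trace of a graded monomial. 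Without an argument for this lower bound your proposal establishes only $c_{n}^{G}(A)\leq(1+o(1))\,\alpha n^{(1-\dim_{F}A_{e})/2}(\dim_{F}A)^{n}$.
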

The asymptotic behavior of the codimension sequence in the fine grading
case was already established in \cite{key-A+K}. However using our
own methods we recalculate it in \secref{Fine}. Finally, in \secref{General-case}
we combine the results in the first sections to capture the polynomial
part (the exponential part is of course known) of the codimention
asymptotics of a general finite dimensional $G$-graded algebra:
\begin{thm*}[\textbf{B}]
 For every finite dimensional $G$-simple algebra $A$, there is
a constant $\alpha$ such that
\end{thm*}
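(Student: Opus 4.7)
The plan is to reduce Theorem B to the two special families already treated in the paper---elementary gradings on matrix algebras (Theorem A) and fine gradings on twisted group algebras (\secref{Fine})---and glue them via the Bahturin--Sehgal--Zaicev classification recalled in \secref{Review-of-simple}. That classification writes any finite-dimensional $G$-simple $F$-algebra as $A \cong M_{k}(F^{\mu}H)$ for some subgroup $H \leq G$, a 2-cocycle $\mu \in Z^{2}(H,F^{\times})$, and a tuple $\mathfrak{g} = (g_{1},\ldots,g_{k}) \in G^{k}$, with $A_{g}$ spanned by those $e_{ij}\otimes b_{h}$ satisfying $g_{i}^{-1} h g_{j} = g$. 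In particular $\dim_{F} A = k^{2}|H|$, and $\dim_{F} A_{e}$ equals the number of triples $(i,j,h)$ with $g_{i}^{-1}hg_{j} = e$.

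First I would redo the invariant-theoretic setup of \secref{Invariants} with this richer input. The multilinear space $P_{n}^{G}/(P_{n}^{G}\cap Id^{G}(A))$ is realized as the dimension of a space of multilinear invariants under the action of a product of general linear groups indexed by $G$, acting on generic elements of $A$ distributed by homogeneous degree. When $A = M_{m}(F)$ with an elementary grading this action was analyzed via Procesi--Razmyslov trace identities to yield the Hilbert series computed in \secref{Hilbert-series}. In the combined case $M_{k}(F^{\mu}H)$ the generic matrices are replaced by generic elements of the twisted group algebra, and the relevant invariant theory couples the $\mathrm{GL}$-action of the elementary part with the $H$-action coming from the fine grading.

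The heart of the argument is then to show that the resulting Hilbert series has the same qualitative structure as in the purely elementary case: a dominant singularity at $z = 1/\dim_{F} A$ whose order is determined by $\dim_{F} A_{e}$, yielding the polynomial factor $n^{(1-\dim_{F} A_{e})/2}$ after a Stirling-type extraction (as carried out in \secref{SI=00003DC}). The exponential factor $(\dim_{F} A)^{n}$ is already guaranteed by the work of Aljadeff, Giambruno and La Matina, so only the polynomial exponent and the constant $\alpha$ need justification. The constant itself should emerge as a product of the constants from Theorem A, from the fine-grading case of \secref{Fine}, and a correction involving the stabilizer subgroup $H_{\mathfrak{g}}\leq G$ attached to the tuple $\mathfrak{g}$.

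The main obstacle I anticipate is verifying that the decomposition $M_{k}(F^{\mu}H) \cong M_{k}(F) \otimes F^{\mu}H$ as ungraded algebras interacts well with the invariant-theoretic calculation, since the $G$-grading on $A$ does not split as a tensor product of gradings. Consequently, one cannot naively factor the Hilbert series into the two known Hilbert series. The technical work is to show that this mixing only affects the constant $\alpha$ while leaving the polynomial exponent $(1-\dim_{F}A_{e})/2$ intact. Once that decoupling is established, Theorem B follows by identifying $\alpha$ as the explicit combination of the constants produced in the two special cases, normalized by $|H_{\mathfrak{g}}|$ in the spirit of Theorem A.
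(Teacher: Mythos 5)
Your proposal heads down the road the paper explicitly declines to travel, and the step you flag as ``the technical work'' is precisely where it stalls. You correctly observe that the $G$-grading on $M_{k}(F^{\mu}H)$ does not split as a tensor product of gradings, so the Hilbert series of the invariant ring cannot be naively factored; but you then assert, without any mechanism, that ``this mixing only affects the constant $\alpha$ while leaving the polynomial exponent intact.'' That is the entire content of the theorem in this approach, and nothing in your sketch supplies it. Carrying out the program of Sections \ref{sec:Invariants}--\ref{sec:SI=00003DC} for the combined algebra would require a first fundamental theorem for the full automorphism group of $F^{\mu}H\otimes M_{m}(F)$, an analogue of the Formanek element $\mathcal{I}$, and a fresh analysis relating the invariant spaces to $\Tr(C_{n}^{G})$ --- none of which is routine, which is why the paper calls this route ``rather complicated'' and does not take it. Your proposed formula for $\alpha$ as a product of the two special-case constants normalized by $|H_{\mathfrak{g}}|$ is likewise unsupported; Theorem B only asserts existence of $\alpha$, and the paper makes no claim about its value in the general case.

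The paper's actual argument is a sandwich. For the upper bound it uses the subgroup $\widetilde{H}\times\overline{G^{\prime}}$ of $\Aut_{G}(A)$: even though the grading does not factor, the invariant space does, giving $\Hom(A^{\otimes n},F)^{\widetilde{H}\times\overline{G^{\prime}}}\cong\Hom(A_{1}^{\otimes n},F)^{\widetilde{H}}\otimes\Hom(A_{2}^{\otimes n},F)^{\overline{G^{\prime}}}$ and hence $c_{n}^{G}(A)\leq\delta n^{\frac{1-\dim_{F}A_{e}}{2}}(|H|m^{2})^{n}$. For the lower bound it exhibits, inside the relatively free algebra realized by generic elements $U_{r,g}$, an explicit family of linearly independent elements of dimension at least $|H|^{n-1}\dim_{F}SI_{n-1}(A_{2})$, which has the same order. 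Crucially, matching upper and lower bounds of the same order do not by themselves yield the asymptotic equivalence $\sim$; the paper closes the gap by invoking the second author's theorem (\cite{Shpigelman}) that $c_{n}^{G}(A)\sim\alpha n^{b}d^{n}$ for \emph{some} constants, which forces $b=\frac{1-\dim_{F}A_{e}}{2}$ and $d=\dim_{F}A$. Your proposal never invokes this external structure theorem (the Aljadeff--Giambruno--La Mattina result you cite only gives $\lim\sqrt[n]{c_{n}^{G}(A)}$, not the existence of the full asymptotic form), so even granting your Hilbert-series claims you would still be missing the ingredient that converts two-sided bounds into an asymptotic.
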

\[
c_{n}^{G}(A)\sim\alpha n^{\frac{1-dimA_{e}}{2}}\left(\dim_{F}A\right)^{n}.
\]

In particular every $G$-simple, finite dimensional algebra $A$ has
a polynomial part equal to $\frac{1-\dim_{F}A_{e}}{2}$. This was
conjectured by E.Aljadeff, D.Haile and M. Natapov (see \cite{key-11}).

\section{\label{sec:Motivation}Getting started}

In order to find $c_{n}^{G}(A)$ asymptotically we will consider another
series of spaces whose corresponding series of dimensions is equal
asymptotically to $c_{n}^{G}(A)$ with the advantage of being asymptotically
computable. The goal of this section is to introduce and motivate
those approximating spaces. 

Let $A$ be any finite dimensional $F$-algebra. It is easy to obtain
a crude upper bound of $c_{n}^{G}(A)$: This is done by observing
that:
\begin{lem}
$C_{n}^{G}(A)=P_{n}^{G}/Id^{G}(A)\cap P_{n}^{G}$ can be embedded
into $\Hom(A^{\otimes n},A)$ (linear homomorphisms of linear $F$-spaces,
where $A^{\otimes n}=A\otimes\cdots\otimes A$ - $n$ times) via 
\[
\psi\left(x_{1,g_{1}}\cdots x_{n,g_{n}}\right)(a_{1}\otimes\cdots\otimes a_{n})=(a_{1})_{g_{1}}\cdots(a_{n})_{g_{n}}
\]
where $(a)_{g}$ is the projection of $a$ onto $A_{g}$ along $\sum_{h\neq g}A_{h}$.
Thus, $c_{n}^{G}(A)\leq(\dim_{F}A)^{n+1}$ . \end{lem}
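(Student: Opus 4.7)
The plan is to verify, in order: (i) the prescription defining $\psi$ extends to a well-defined $F$-linear map $P_n^G \to \Hom(A^{\otimes n}, A)$; (ii) $\ker \psi = P_n^G \cap Id^G(A)$; (iii) the dimension bound follows from (ii). For (i), since $P_n^G$ has an $F$-basis consisting of the multilinear monomials (one for each permutation of $x_{1,g_1}\cdots x_{n,g_n}$ and each choice of $(g_1,\ldots,g_n)\in G^n$), the formula given in the statement specifies a multilinear map $A^n\to A$ on each basis element; by the universal property of the tensor product this corresponds to a unique element of $\Hom(A^{\otimes n},A)$, and extending linearly yields $\psi$.

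For the kernel computation I would decompose an arbitrary $f\in P_n^G$ as $f=\sum_{\bar g\in G^n} f_{\bar g}$, where $f_{\bar g}$ collects the monomials whose $i$-th variable is tagged $g_i$. Because the projection $(a_i)_{g_i}$ picks out the $g_i$-homogeneous component of $a_i$, one verifies directly that $\psi(f)(a_1\otimes\cdots\otimes a_n)=\sum_{\bar g} f_{\bar g}\bigl((a_1)_{g_1},\ldots,(a_n)_{g_n}\bigr)$, where each summand on the right is just the admissible evaluation of $f_{\bar g}$ on the projected tuple. If $f\in Id^G(A)$ then every summand vanishes, so $\psi(f)=0$. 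Conversely, substituting homogeneous elements $a_i\in A_{h_i}$ isolates the single term $f_{\bar h}(a_1,\ldots,a_n)$, so $\psi(f)=0$ forces each $f_{\bar g}$ to vanish on every admissible evaluation, i.e.\ $f\in Id^G(A)$.

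With injectivity of the induced map $C_n^G(A)\hookrightarrow\Hom(A^{\otimes n},A)$ in hand, the bound $c_n^G(A)\leq\dim_F\Hom(A^{\otimes n},A)=(\dim_F A)^{n+1}$ is immediate. The only step requiring any care is (ii), but this is really just bookkeeping on how a $G^n$-multigrading decomposes under the projections $a\mapsto (a)_g$; I do not anticipate any obstacle, and the argument uses nothing more than the existence of the homogeneous decomposition of $A$.
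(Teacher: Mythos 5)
Your proposal is correct and takes essentially the same route as the paper: $\psi$ annihilates exactly the graded identities because evaluating $\psi(f)$ on a tensor of homogeneous elements recovers an admissible evaluation of $f$, and the dimension bound is then immediate. Your explicit multidegree decomposition $f=\sum_{\bar g}f_{\bar g}$ merely makes precise a bookkeeping step that the paper leaves implicit by writing a general element as $f(x_{1,g_1},\ldots,x_{n,g_n})$.
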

\begin{proof}
Notice that $\psi$ is well defined since for any $G$-graded multilinear
identity $f=f(x_{1,g_{1}},...,x_{n,g_{n}})$ of $A$, 
\[
\psi\left(f\right)(a_{1}\otimes\cdots\otimes a_{n})=f\left((a_{1})_{g_{1}},...,(a_{n})_{g_{n}}\right)=0,
\]
since $f\in Id^{G}(A)$. 

Next, if for some $G$-graded multilinear polynomial $f=f(x_{1,g_{1}},...,x_{n,g_{n}})$
the image of $f$ under $\psi$ is zero, we get that for every $a_{1}\in A_{g_{1}},...,a_{n}\in A_{g_{n}}$:
\[
0=\psi\left(f\right)(a_{1}\otimes\cdots\otimes a_{n})=f\left((a_{1})_{g_{1}},...,(a_{n})_{g_{n}}\right)=f(a_{1},...,a_{n}).
\]
That is $f\in Id^{G}(A)$. Thus, $\psi$ is an embedding. 

Finally, $c_{n}^{G}(A)\leq(\dim_{F}A)^{n+1}$ follows, because $\dim_{F}\left(\Hom(A^{\otimes n},A)\right)=\left(\dim_{F}A\right)^{n+1}$.
\end{proof}
In most cases this bound is far from being strict, since for most
$G$-graded algebras $A$ one has $\exp^{G}(A)<\dim_{F}A$. It is
better in the finite dimensional $G$-simple case, since then the
exponent is equal to $\dim_{F}A$. Still, it makes more sense to consider
subspaces which, in some sense, take into account the ($G$-graded)
structure of $A$. 

More precisely, denote $\widetilde{G}=\Aut_{G}(A)$, that is $\phi\in\widetilde{G}$
if $\phi\in\Aut(A)$ and $\phi(A_{g})\subseteq A_{g}$ for every $g\in G$
(in other words, the ($F$-algebra) automorphisms of $A$ which preserve
the $G$-grading). The action of $\widetilde{G}$ on $A$ and thus
on $A^{\otimes n}$ (diagonally) induces an action on the spaces $\Hom(A^{\otimes n},A)$
via 
\[
\phi(f)(a_{1}\otimes\cdots\otimes a_{n})=\phi\circ f\left(\phi^{-1}(a_{1})\otimes\cdots\otimes\phi^{-1}(a_{n})\right).
\]
So we may consider the $\widetilde{G}$-invariant spaces $\Hom(A^{\otimes n},A)^{\widetilde{G}}$.
It is easy to see that under the above embedding $C_{n}^{G}(A)$ are
contained in the $\widetilde{G}$-invariant spaces, making their dimensions
upper bounds for $c_{n}^{G}(A)$. 

In some special cases it is possible to replace the spaces $\Hom(A^{\otimes n},A)$
and $\Hom(A^{\otimes n},A)^{\widetilde{G}}$ by the, somewhat simpler,
spaces $T_{n+1}=\Hom(A^{\otimes(n+1)},F)$ and $T_{n+1}^{\widetilde{G}}=\Hom(A^{\otimes(n+1)},F)^{\widetilde{G}}$
(notice that $\widetilde{G}$ acts also on $T_{n}$ by 
\[
\phi(f)(a_{1}\otimes\cdots\otimes a_{n})=f\left(\phi^{-1}(a_{1})\otimes\cdots\otimes\phi^{-1}(a_{n})\right),
\]
 so the latter space makes sense). This is the case in the following
lemma:
\begin{lem}
\label{lem:trace}Let $A$ be a finite dimensional $G$-graded algebra.
Suppose that there exist a $\widetilde{G}$-invariant nondegenerate
bilinear form $tr(\,,\,)$ over $A$. Then 
\[
\psi:\Hom(A^{\otimes n},A)\rightarrow T_{n+1}
\]
given by $\psi(f)=tr(f,x_{n+1})$, where 
\[
tr(f,x_{n+1}):a_{1}\otimes\cdots\otimes a_{n+1}\mapsto tr(f(a_{1}\otimes\cdots\otimes a_{n}),a_{n+1})
\]
is $\widetilde{G}$-isomorphism. In particular $\psi$ identifies
$\Hom(A^{\otimes n},A)^{\widetilde{G}}$ with $T_{n+1}^{\widetilde{G}}$
. \end{lem}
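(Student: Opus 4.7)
The plan is to check that $\psi$ is an $F$-linear isomorphism of vector spaces and that it intertwines the two $\widetilde{G}$-actions; both facts are immediate consequences of the non-degeneracy and $\widetilde{G}$-invariance of $tr(\,,\,)$. The conceptual point is that a non-degenerate invariant pairing on $A$ realizes the standard adjunction $\Hom(A^{\otimes n},A)\simeq (A^{\otimes (n+1)})^{*}$ in an equivariant way; once this is seen, the lemma is almost formal.

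First I would confirm that $\psi$ is well defined and $F$-linear, which is obvious from the bilinearity of $tr$. Next, for injectivity, suppose $\psi(f)=0$: then for each tuple $(a_{1},\dots,a_{n})$ the element $b:=f(a_{1}\otimes\cdots\otimes a_{n})\in A$ satisfies $tr(b,a_{n+1})=0$ for every $a_{n+1}\in A$, so non-degeneracy of $tr$ forces $b=0$, hence $f=0$. Since both $\Hom(A^{\otimes n},A)$ and $T_{n+1}$ have dimension $(\dim_{F}A)^{n+1}$, injectivity gives the isomorphism. Alternatively one can write down an explicit inverse: given $T\in T_{n+1}$, the partial evaluation $a_{n+1}\mapsto T(a_{1}\otimes\cdots\otimes a_{n+1})$ is a linear functional on $A$, which by non-degeneracy equals $tr(b,-)$ for a unique $b\in A$, and setting $f(a_{1}\otimes\cdots\otimes a_{n})=b$ produces the preimage.

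Finally I would verify $\widetilde{G}$-equivariance. The only subtle point is that the actions on source and target look different: on $\Hom(A^{\otimes n},A)$ the automorphism $\phi$ also acts on the output, whereas on $T_{n+1}$ it does not. Unwinding definitions, one computes
\[
\psi(\phi(f))(a_{1}\otimes\cdots\otimes a_{n+1}) = tr\bigl(\phi(f(\phi^{-1}(a_{1})\otimes\cdots\otimes\phi^{-1}(a_{n}))),\,a_{n+1}\bigr),
\]
and the $\widetilde{G}$-invariance $tr(\phi(x),y)=tr(x,\phi^{-1}(y))$ converts this into
\[
tr\bigl(f(\phi^{-1}(a_{1})\otimes\cdots\otimes\phi^{-1}(a_{n})),\,\phi^{-1}(a_{n+1})\bigr) = (\phi(\psi(f)))(a_{1}\otimes\cdots\otimes a_{n+1}).
\]
Thus $\psi$ is $\widetilde{G}$-equivariant, and consequently restricts to an isomorphism between the invariant subspaces.

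I do not expect any real obstacle in this lemma; it is a routine verification once one writes out the actions carefully. The actual content that it will be used to exploit is the existence of such a $\widetilde{G}$-invariant non-degenerate form $tr$ in the relevant examples (the ordinary trace on $M_{m}(F)$, and a suitable analogue on a twisted group algebra $F^{\mu}G$), which is handled separately in the paper.
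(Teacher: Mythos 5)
Your proposal is correct and follows essentially the same route as the paper: equivariance via the invariance identity $tr(\phi(x),y)=tr(x,\phi^{-1}(y))$, injectivity from non-degeneracy, and surjectivity from the equality of dimensions. The explicit inverse you sketch is a harmless alternative to the dimension count, but nothing of substance differs.
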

\begin{proof}
$\psi$ is indeed a $\widetilde{G}$-homomorphism since 
\begin{eqnarray*}
\psi\left(g\left(f\right)\right)(a_{1}\otimes\cdots\otimes a_{n+1}) & = & tr(gf(g^{-1}a_{1}\otimes\cdots\otimes g^{-1}a_{n}),a_{n+1})\\
 & = & tr(f(g^{-1}a_{1}\otimes\cdots\otimes g^{-1}a_{n}),g^{-1}a_{n+1})=g\psi(f)
\end{eqnarray*}
Moreover, due to the nondegeneracy of $tr(\,,\,)$, $\psi$ is injective.
Finally, since $T_{n+1}$ and $\Hom(A^{\otimes n},A)$ have the same
dimension we also get surjectivity.
\end{proof}
In this paper we are concerned only with algebras $A$ which are $G$-graded
simple (and finite dimensional). Throughout the article we focus on
different families of such algebras and for each such family we will
define a functional $tr$ as required in the above lemma. 

The above discussion suggests that we should try to approximate $c_{n}^{G}(A)$
by $t_{n+1}^{G}(A)$. More precisely, we should prove that $t_{n+1}^{G}(A)=\dim_{F}T_{n}^{\widetilde{G}}(A)\sim c_{n}^{G}(A)$
and calculate $t_{n+1}^{G}(A)$ asymptotically. This will be done
for $G$-simple, finite dimensional algebras $A$.

\section{\label{sec:Review-of-simple} Finite dimensional $G$-graded algebras}

Let us briefly recall some of the basic definitions and results concerning
group gradings of (associative) algebras, and $G$-simple algebras.

Let $W$ be an associative algebra over a field $F$ and $G$ a finite
group. We say that $W$ is $G$-graded if there is an $F$-vector
space decomposition
\[
W=\bigoplus_{g\in G}W_{g}
\]
 where $W_{g}W_{h}\subseteq W_{gh}$.

In order to put the results of this work into the right perspective,
we introduce the following structure theory of $G$-graded finite
dimensional algebras. 

Suppose now that $W=A$ is finite dimensional semisimple over $F$,
where $F$ is algebraically closed of characteristic zero, and $J(A)$
is its Jacobson radical. The algebra $\overline{A}=A/J(A)$ is semisimple.
Since $F$ is algebraically closed, $\overline{A}$ is a direct sum
of matrix algebras over $F$. Furthermore, from Wedderburn and Malcev's
well known theorem one can lift idempotents from $\bar{A}$ to $A$
and obtain a ($F$-vector spaces) decomposition 
\[
A=\hat{A}\oplus J(A)
\]
where $\overline{A}\cong\hat{A}$ and $\hat{A}$ is an $F$-subalgebra
of $A$. In other words, there exists a semisimple subalgebra of $A$
which supplements $J(A)$ as an $F$-vector space. Now, if $A$ is
$G$-graded, the Wedderburn-Malcev decomposition is compatible with
the grading. Namely $J(A)$ is $G$-graded and one can find a semisimple
supplement which is $G$-graded as well.

Let us restrict ourself to the case where the algebra $A$ is semisimple.
In that case one knows that $A$ decomposes into a direct sum of semisimple,
$G$-graded simple algebras and so we further restrict and assume
that $A$ is a $G$-simple algebra. The $G$-graded structure of a
$G$-simple algebra was studied in \cite{key-5}. Before recalling
the precise result let us present two examples of $G$-simple algebras
which turn out to be typical.

The first is the well known group algebra $FG$ with the natural $G$-grading.
Clearly, it has no nontrivial $G$-graded two sided ideals since any
nonzero homogeneous element is invertible. More generally, one may
consider a twisted group algebra $F^{\alpha}G$ with the natural $G$-grading
(here $\alpha$ is a 2-cocycle of $G$ with values in $F^{\times}$).
Clearly it is $G$-simple. In fact, one may consider a somewhat more
general $G$-simple algebra: a twisted group algebra $A=F^{\alpha}H$,
$H<G$, by setting $A_{g}=0$ for $g\in G-H$. Such a grading, in
which each homogeneous component has dimension $\leq1$ is called
fine.

A different way to obtain a $G$-simple algebra is by grading $M_{m}(F)$
where each elementary matrix $e_{i,j}$ is homogeneous. Note that
since $M_{m}(F)$ is a simple algebra, any such grading determines
a $G$-simple algebra. These gradings are called elementary and may
be described in the following way. Choose an $m$-tuple $\mathfrak{g}=(\gamma_{1},...,\gamma_{m})\in G^{m}$.
Then we let the elementary matrix $e_{i,j}$ be homogeneous of degree
$g_{i}^{-1}g_{j}$. One can easily see that this determines a $G$-grading
on $M_{m}(F)$. A theorem of Bahturin, Sehgal and Zaicev claims that
any $G$-grading of a $G$-simple algebra over an algebraically closed
field $F$ of characteristic zero is a suitable combination of a fine
and elementary grading. Here is the precise statement.
\begin{thm}
\label{thm:Let--be}\cite{key-5}Let $A$ be a finite dimensional
$G$-simple algebra. Then there exists a subgroup $H$ of $G$, a
2-cocycle $\mu\in H^{2}(H,F^{\times})$ where the action of $H$ on
$F$ is trivial, an integer $m$ and an $m$-tuple $(\gamma_{1}=e,...,\gamma_{m})\in G^{m}$
such that $A$ is $G$-graded isomorphic to $C=F^{\mu}H\otimes M_{m}(F)$
where $C_{g}=\sp\left\{ b_{h}\otimes e_{i,j}:g=\gamma_{i}^{-1}h\gamma_{j}\right\} $.
Here $b_{h}\in F^{\mu}H$ is a representative of $h\in H$ and $e_{i,j}\in M_{m}(F)$
is the $(i,j)$ elementary matrix. In particular the idempotents $1\otimes e_{i,i}$
as well as the identity element of $A$ are homogeneous of degree
$e\in G$. 
\end{thm}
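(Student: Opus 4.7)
The plan is to follow the classical approach: (i) show $A$ is semisimple; (ii) extract a graded division subalgebra from $A$ as a corner at a suitable idempotent; (iii) identify this subalgebra with a twisted group algebra $F^\mu H$ for some $H\le G$ and $\mu\in H^2(H,F^\times)$; and (iv) assemble the full tensor product $F^\mu H\otimes M_m(F)$ via graded matrix units.

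For step (i), the Jacobson radical $J(A)$ of a finite-group-graded algebra in characteristic zero is itself a graded ideal (for example, via the dual coaction of $FG^\ast$ when $G$ is abelian, or directly by checking that every homogeneous component of a radical element is again radical); hence $J(A)=0$ by $G$-simplicity of $A$, and so $A$, and a fortiori $A_e$, is semisimple. For step (ii), I would choose a complete system $e_1,\dots,e_m$ of primitive orthogonal idempotents in $A_e$ summing to $1$, intended to play the role of the diagonal matrix units $1\otimes e_{i,i}$ of the target structure---automatically homogeneous of degree $e$. Set $D:=e_1Ae_1$. Then $D$ has no nonzero proper graded two-sided ideal: if $0\ne J\subseteq D$ were one, $G$-simplicity of $A$ gives $AJA=A$, whence $e_1\in DJD\subseteq J$. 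Moreover $D_e=e_1A_ee_1=Fe_1$, because $e_1$ is primitive in the semisimple algebra $A_e$. Together these two properties force $D$ to be a $G$-\emph{graded division algebra}: any nonzero $x\in D_g$ is invertible in $D$ with a homogeneous inverse in $D_{g^{-1}}$ (a nonzero graded annihilator of $x$ would generate a proper graded two-sided ideal).

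For step (iii), the support $H:=\{g\in G:D_g\ne0\}$ is then a subgroup of $G$, each $D_h$ is one-dimensional over $F$, and a choice of nonzero $b_h\in D_h$ (with $b_e=e_1$) determines a $2$-cocycle $\mu\in H^2(H,F^\times)$ via $b_{h_1}b_{h_2}=\mu(h_1,h_2)b_{h_1h_2}$; the resulting map $F^\mu H\xrightarrow{\sim}D$ is a $G$-graded algebra isomorphism. For step (iv), each $e_1Ae_i$ is a cyclic right $D$-module of $F$-dimension $|H|$ (via the ungraded Morita equivalence $A\simeq M_m(D)$ coming from Artin--Wedderburn applied to the semisimple $A$), so it admits a homogeneous generator $u_i\in(e_1Ae_i)_{\gamma_i}$ for some $\gamma_i\in G$; normalize so that $u_1=e_1$ and $\gamma_1=e$. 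Letting $v_i\in e_iAe_1$ be the homogeneous element of degree $\gamma_i^{-1}$ with $u_iv_i=e_1$ and $v_iu_i=e_i$, the assignment $b_h\otimes e_{i,j}\mapsto v_ib_hu_j$ extends to a $G$-graded algebra isomorphism $F^\mu H\otimes M_m(F)\xrightarrow{\sim}A$, and the image is homogeneous of degree $\gamma_i^{-1}h\gamma_j$, yielding the asserted description of $C_g$.

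The main obstacle is step (iii): verifying carefully that a corner at a primitive idempotent of $A_e$ genuinely produces a graded division algebra, that its support is closed under inverses and products, and that the cohomology class of $\mu$ is an invariant independent of the choice of the $b_h$. Once this is in place, the construction of the matrix-unit data $(u_i,v_i)$ and the verification that the tensor product map respects multiplication (the case $j=j'$ uses $u_jv_j=e_1$, the case $j\ne j'$ uses $e_je_{j'}=0$) are routine bookkeeping.
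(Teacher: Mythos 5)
The paper does not prove this theorem: it is quoted verbatim from Bahturin--Sehgal--Zaicev \cite{key-5}, so your proposal must be measured against the standard argument rather than against anything in the text. Your architecture (semisimplicity of $A$ and $A_{e}$, the corner $D=e_{1}Ae_{1}$ at a primitive idempotent of $A_{e}$, identification of $D$ with $F^{\mu}H$, assembly of graded matrix units) is exactly the standard route and, once completed, does prove the theorem. But the step you yourself flag as the crux is not justified by what you wrote. The parenthetical argument that $D$ is a graded division algebra is backwards: the annihilator of a homogeneous $x\in D_{g}$ is only a \emph{one-sided} graded ideal, and the two-sided graded ideal it generates is, by the graded simplicity of $D$ you have just established, equal to all of $D$ rather than proper --- so no contradiction arises. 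A correct argument needs a different input. For instance: $D$ is semisimple over a field of characteristic zero, so its regular trace form $T(a,b)=\mathrm{tr}(L_{ab})$ is nondegenerate; left multiplication by a homogeneous element of degree $\ne e$ permutes the components $D_{h}$ without fixed points and is therefore traceless, so $T(D_{g},D_{h})=0$ unless $gh=e$; hence $T$ pairs $D_{g}$ nondegenerately with $D_{g^{-1}}$ into $D_{e}=Fe_{1}$, producing for each $0\ne x\in D_{g}$ homogeneous one-sided inverses on both sides, whence $x$ is invertible. Only after this do the claims that the support is a subgroup and that each $D_{h}$ is one-dimensional follow.

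There is a second, smaller gap in step (iv): the appeal to ``the ungraded Morita equivalence $A\simeq M_{m}(D)$ coming from Artin--Wedderburn'' does not work as stated, since $A$ need not be simple as an ungraded algebra (take $A=FG$ with its natural grading), $D$ is not a division algebra, and Artin--Wedderburn says nothing about the particular idempotents $e_{1},\dots,e_{m}$ you chose. What you actually need, and can prove directly, is: (a) $Ae_{i}A$ is a nonzero graded two-sided ideal, hence equals $A$, so $e_{1}\in(e_{1}Ae_{i})(e_{i}Ae_{1})$; (b) extracting homogeneous components and using $(e_{i}Ae_{i})_{e}=Fe_{i}$ (primitivity of $e_{i}$ in $A_{e}$ plus $F$ algebraically closed) yields homogeneous $u_{i},v_{i}$ with $v_{i}u_{i}=e_{i}$, and then $u_{i}v_{i}$ is a nonzero homogeneous idempotent in $D_{e}=Fe_{1}$, hence equals $e_{1}$; (c) consequently $e_{i}Ae_{j}=v_{i}Du_{j}$ is free of rank one over $D$. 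With these repairs your map $b_{h}\otimes e_{i,j}\mapsto v_{i}b_{h}u_{j}$ is indeed a $G$-graded isomorphism and the degree bookkeeping you give is correct; as written, however, the two decisive verifications are missing or wrong.
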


\section{\label{sec:Invariants}Invariants of $m\times m$ matrices with elementary
$G$-grading}

Let $A=M_{m}(F)$ graded by an elementary $G$-grading corresponding
to a vector $\mathfrak{g}\in G^{m}$. As in \secref{Motivation} we
denote by $\widetilde{G}$ the group $\Aut_{G}(A)$ - automorphisms
which preserve the $G$-grading on $A$, and by $T_{n}$ the $\widetilde{G}$-module
$\Hom(A^{\otimes n},F)$. Here the action is given by 
\[
\phi\cdot f\left(a_{1}\otimes\cdots\otimes a_{n}\right)=f\left(\phi^{-1}(a_{1})\otimes\cdots\otimes\phi^{-1}(a_{n})\right)
\]
where $\phi\in\widetilde{G}$ and $f\in T_{n}$.

\subsection{Calculation of $T_{n}^{\widetilde{G}}$}

Before we can calculate $T_{n}^{\widetilde{G}}$ and show that $t_{n+1}^{G}(A)\sim c_{n}^{G}(A)$,
we should have a better understanding of $\widetilde{G}$ and $T_{n}^{\widetilde{G}}$.
This will be accomplished by obtaining a more ``concrete'' presentation
of these objects.

We may assume that $\mathfrak{g}=(\gamma_{1},..,\gamma_{1},...,\gamma_{k},...,\gamma_{k})$,
where the $\gamma_{i}$ are distinct and $\gamma_{i}$ appears $m_{i}=m_{\gamma_{i}}>0$
times (we therefore use the notation $\mathfrak{g}=(\gamma_{1}^{m_{1}},...,\gamma_{k}^{m_{k}})$).
The set $\{\gamma_{1},...,\gamma_{k}\}$ of the different group elements
appearing in the grading vector $\mathfrak{g}$ is denoted by $B$.
Next, since matrices are involved, we need a notation for writing
matrices which is compatible with the grading. Therefore, we are indexing
the rows and columns of any $m\times m$ matrix by $\gamma_{1}(1),...,\gamma_{1}(m_{1}),...,\gamma_{k}(1),...,\gamma_{k}(m_{k})$. 
\begin{rem}
We should have used the notation $(\gamma_{1},1),...,(\gamma_{k},m_{k})$
instead, however we find it less readable.
\end{rem}
Moreover, if $X=(x_{\gamma_{i}(r_{i}),\gamma_{j}(r_{j})})$ is an
$m\times m$ matrix, \emph{the $(\gamma_{i},\gamma_{j})$}\textit{-block}\emph{
of $X$ }is 
\[
\widehat{X}_{\gamma_{i},\gamma_{j}}=\sum_{r_{i}=1}^{m_{g_{i}}}\sum_{r_{j}=1}^{m_{g_{j}}}x_{\gamma_{i}(r_{i}),\gamma_{j}(r_{j})}e_{r_{i},r_{j}}\in M_{m_{\gamma_{i}}\times m_{\gamma_{j}}}(F)
\]
 where $e_{r_{i},r_{j}}$ is an elementary matrix. So, 
\[
X=\left(\begin{array}{ccc}
\widehat{X}_{\gamma_{1},\gamma_{1}} & \cdots & \widehat{X}_{\gamma_{1},\gamma_{k}}\\
\vdots & \ddots & \vdots\\
\widehat{X}_{\gamma_{k},\gamma_{1}} & \cdots & \widehat{X}_{\gamma_{k},\gamma_{k}}
\end{array}\right)
\]
 Denote
\[
X_{\gamma_{i},\gamma_{j}}=\sum_{r_{i}=1}^{m_{\gamma_{i}}}\sum_{r_{j}=1}^{m_{\gamma_{j}}}x_{\gamma_{i}(r_{i}),\gamma_{j}(r_{j})}e_{\gamma_{i}(r_{i}),\gamma_{j}(r_{j})}
\]
i.e. the $(\gamma_{i},\gamma_{j})$-block of $X_{\gamma_{i},\gamma_{j}}$
is $\widehat{X}_{\gamma_{i},\gamma_{j}}$ and all other blocks are
zero. Note that $X_{\gamma_{i},\gamma_{j}}\in A_{\gamma_{i}^{-1}\gamma_{j}}$.
Finally, matrices (in $A$) of the form $X_{\gamma_{i},\gamma_{j}}$
(that is, all the blocks apart from the $(\gamma_{i},\gamma_{j})$-block
are zero) will be called $(\gamma_{i},\gamma_{j})$\emph{-matrices}. 
\begin{rem}
\emph{The notation introduced here will be used freely throughout
most of the paper. }
\end{rem}

\begin{rem}
We distinguish between the matrices $X_{\gamma_{i},\gamma_{j}}$ and
$\widehat{X}_{\gamma_{i},\gamma_{j}}$ only for the sake of clarity
in some of the proofs appearing in this section. 
\end{rem}
By Noether-Skolem theorem, the automorphism group of $A$ (denoted
by $\Aut(A)$) consists of all transformation $\tau_{X}:A\to A$ given
by $\tau_{X}(a)=X^{-1}aX$, where $a\in A$ and $X\in GL_{m}(F)$.
Therefore, we got an epimorphism of groups $GL_{m}(F)\to\Aut(A)$.
The kernel is well known and is equal to the center of $GL_{m}(F)$,
which is the subgroup of non-zero scalar matrices. Therefore, $\Aut(A)$
is (isomorphic to) $PGL_{m}(F)$. As a result, in order to understand
the group $\widetilde{G}$ it is enough to find all matrices $X\in GL_{m}(F)$
such that $\tau_{X}$ preserves the $G$-graded structure. In other
words, we intend to describe the group, denoted by $\overline{G}$,
consisting of all matrices $X\in GL_{m}(F)$ such that $X^{-1}A_{g}X\subseteq A_{g}$
for every $g\in G$.

Let us start with a lemma:
\selectlanguage{american}%
\begin{lem}
\label{lem:zero}Let $X\in GL_{m}(F)$. Suppose that $X^{-1}A_{g}X\subseteq A_{g}$
for every $g\in G$. Then for every $t\in B$ there is a unique $h\in B$
such that $m_{t}=m_{h}$, $\widehat{X}_{t,h}$ is non-singular and
$\widehat{X}_{t,s}=0$ for every $s\ne h$.\end{lem}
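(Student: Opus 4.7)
The plan is to use only the hypothesis at $g = e$ together with the Wedderburn structure of the identity component $A_e$. Since the grading on $A = M_m(F)$ is elementary with vector $\mathfrak{g} = (\gamma_1^{m_1}, \ldots, \gamma_k^{m_k})$, the component $A_e$ is the block-diagonal subalgebra $\bigoplus_{t \in B} M_{m_t}(F)$, whose primitive central idempotents are precisely the block identities $E_t = \sum_{r=1}^{m_t} e_{t(r), t(r)}$, one for each $t \in B$.

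The first step is to note that the inclusion $X^{-1} A_e X \subseteq A_e$ (the hypothesis specialized at $g = e$) is automatically an equality, since conjugation preserves dimension. Thus conjugation by $X^{-1}$ is an $F$-algebra automorphism of $A_e$ and in particular permutes the finite set $\{E_t : t \in B\}$ of primitive central idempotents. So for each $t \in B$ there is a unique $h = h(t) \in B$ with $X^{-1} E_t X = E_h$; comparing ranks of the two idempotents forces $m_t = m_h$.

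The second step is to translate $X^{-1} E_t X = E_h$ into the desired block-vanishing statement. Rewriting it as $E_t X = X E_h$, the left-hand side is obtained from $X$ by zeroing all rows outside the index set $\{t(1), \ldots, t(m_t)\}$, while the right-hand side zeros all columns outside $\{h(1), \ldots, h(m_h)\}$. Equating the two matrices forces every entry of $X$ in row $t(\cdot)$ outside the $(t,h)$-block to vanish, i.e.\ $\widehat{X}_{t, s} = 0$ for every $s \neq h$. Doing this at every $t$ leaves $X$ with at most one nonzero block per block-row. The assignment $t \mapsto h(t)$ is injective (if $h(t) = h(t')$, conjugating back gives $E_t = E_{t'}$), hence a bijection of $B$; so after permuting block-columns by $h^{-1}$, the matrix $X$ becomes block-diagonal with diagonal blocks $\widehat{X}_{t, h(t)}$, and $\det X \ne 0$ forces each such block to be nonsingular.

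I do not foresee a serious obstacle: the whole content is that an $F$-algebra automorphism of the semisimple algebra $A_e$ permutes its primitive central idempotents. The only line that needs care is the passage from the inclusion $X^{-1} A_e X \subseteq A_e$ to equality, which is immediate by dimension. It is worth remarking that only the hypothesis at $g = e$ is used here; the stronger hypothesis ``$X^{-1} A_g X \subseteq A_g$ for every $g \in G$'' will presumably be invoked in subsequent lemmas to constrain the scalar freedom remaining inside each $\widehat{X}_{t, h(t)}$.
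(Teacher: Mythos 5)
Your proof is correct and follows essentially the same route as the paper's: both exploit that conjugation by $X$ restricts to an automorphism of the semisimple algebra $A_e=\prod_{t\in B}M_{m_t}(F)$, which therefore permutes its simple factors (equivalently, its primitive central idempotents $E_t$), and both extract the block-vanishing from the resulting identity $E_tX=XE_{h}$. The only cosmetic differences are that the paper phrases the permutation step via minimal ideals before passing to the central idempotents, and argues nonsingularity of $\widehat{X}_{t,h}$ by linear dependence of columns rather than your (equally valid) block-permutation of $\det X$.
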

\begin{proof}
Let $X=\sum_{s,t\in B}X_{s,t}$. First note that $\phi:A_{e}\rightarrow A_{e}$;
$\phi(Y)=X^{-1}YX$ is an automorphism of $F$-algebras. Since $A_{e}=\prod_{t\in B}M_{m_{t}}(F)$,
a direct product of simple $F$-algebras, every ideal of $A_{e}$
must be of the form \foreignlanguage{english}{$\prod_{t\in B}J_{t}$,
where $J_{t}\vartriangleleft M_{m_{t}}(F)$.} Therefore, \foreignlanguage{english}{we
may regard each $M_{m_{t}}(F)$ as a minimal ideal of $A_{e}$. Because
automorphisms take minimal ideals to minimal ideals, we obtain tha}t
$\phi$ must take each $M_{m_{t}}(F)$ to some distinct $M_{m_{h}}(F)$.
Moreover, because automorphisms preserve dimension, $m_{t}=m_{h}$.

\selectlanguage{english}%
Hence, if $Y=Y_{t_{0},t_{0}}$ and $\widehat{Y}_{t_{0},t_{0}}=I_{m_{t_{0}}}$,
there is an $h_{0}\in B$, such that $X^{-1}YX$ is the $(h_{0},h_{0})$-matrix
$Z=Z_{h_{0},h_{0}}$, where $\hat{Z}=I_{m_{h_{0}}}$ (automorphism
sends central idempotent to central idempotent) . Thus, 
\[
XZ=YX\Rightarrow\sum_{s\in B}X_{s,h_{0}}=\sum_{r\in B}X_{t_{0},r}.
\]
So $\widehat{X}_{s,h_{0}}=0$ for $s\ne t_{0}$. Moreover, if $\widehat{X}_{t_{0},h_{0}}$
is singular (i.e. its columns are linearly dependent), then the $m_{h_{0}}$
columns of $X$ corresponding to $h_{0}$ are also linearly dependent.
This is impossible since $X\in GL_{m}(F)$. 
\end{proof}
\selectlanguage{english}%
Therefore, if $X\in\overline{G}$ it must be of the form: 
\[
X=\sum_{t\in B}X_{t,h_{t}},
\]
where $h:B\to B$ is a bijection, such that $m_{t}=m_{h_{t}}$. In
other words, $X$ is a block (generalized) diagonal regular matrix
of $GL_{m}(F)$. This simple structure makes the computation of the
inverse $X^{-1}$ very simple. This will be reflected in the proof
of the next lemma. 
\begin{lem}
\label{lem:With-the-above}With the above notations. There is some
$g\in G$ such that $h_{t}=gt$.
\selectlanguage{american}%
\begin{proof}
Note that $X^{-1}=\sum_{t\in B}\left(X^{-1}\right)_{t,h_{t}}$, where
\foreignlanguage{english}{$\widehat{X^{-1}}{}_{t,h_{t}}=\left(\widehat{X}_{h_{t},t}\right)^{-1}$}.
Therefore,
\[
X^{-1}=\sum_{t\in B}\left(\left(\widehat{X}_{h_{t},t}\right)^{-1}1_{t}(\in A_{h_{t}^{-1}t})\right)
\]
(here $1_{t}=\sum_{i}e_{t(i),t(i)}$.) 

Fix $r\in B$ and a nonzero $(r,t)$-matrix $Y=Y_{r,t}\in A_{r^{-1}t}$.
We have: 
\[
0\neq X^{-1}YX=\left(\widehat{X}_{h_{t},t}\right)^{-1}1_{t}Y_{r,t}X_{t,h_{t}}=\in A_{h{}_{r}^{-1}h_{t}}.
\]
Since $X\in\overline{G}$ the matrix $X^{-1}YX$ must be in the same
$G$-graded component as $Y$. That is, in $A_{t^{-1}r}$. As a result,
$h{}_{r}^{-1}h_{t}=r^{-1}t$, which implies that $th{}_{t}^{-1}=rh{}_{r}^{-1}=:g^{-1}$.
Thus, $h_{t}=gt$ for every $t\in B$.
\end{proof}
\end{lem}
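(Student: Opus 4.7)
The plan is to exploit the conjugation action of $X$ on specific homogeneous elements in order to extract a group-theoretic constraint on the bijection $h : B \to B$ produced by Lemma~\ref{lem:zero}. Since by that lemma any $X \in \overline{G}$ has the block-generalized-diagonal form $X = \sum_{t \in B} X_{t, h_t}$ with $\widehat{X}_{t, h_t}$ invertible, its inverse should also be of this form (with columns and rows swapped): one verifies directly that $X^{-1} = \sum_{t \in B} (X^{-1})_{h_t, t}$ where $\widehat{X^{-1}}_{h_t, t} = \widehat{X}_{t, h_t}^{-1}$, just by checking that $X \cdot X^{-1} = \sum_t 1_t = I_m$.

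Next, for each pair $r, t \in B$ I would pick a nonzero $(r, t)$-matrix $Y = Y_{r, t}$, which by construction lives in $A_{r^{-1} t}$, and compute $X^{-1} Y X$ blockwise. Because $X^{-1}$ is supported only on blocks of type $(h_s, s)$, the product $X^{-1} Y$ is supported only on the $(h_r, t)$-block; multiplying on the right by $X$, which is supported on blocks of type $(s, h_s)$, forces the result to be supported only on the $(h_r, h_t)$-block, and to be nonzero there since both relevant diagonal blocks of $X^{\pm 1}$ are invertible. Hence $X^{-1} Y X \in A_{h_r^{-1} h_t}$.

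The invariance $X^{-1} A_g X \subseteq A_g$ built into the definition of $\overline{G}$ then forces $h_r^{-1} h_t = r^{-1} t$ for every $r, t \in B$. Rewriting this as $t h_t^{-1} = r h_r^{-1}$ shows the common value $g^{-1} := t h_t^{-1}$ is independent of $t$, which gives the desired element $g \in G$ with $h_t = g t$ for all $t \in B$.

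The main obstacle here is purely organizational: correctly tracking which block of a block-generalized-diagonal matrix survives each matrix multiplication. Once the grading of $X^{-1} Y X$ is correctly identified as $h_r^{-1} h_t$, the rest is an elementary group-theoretic manipulation, and no analysis beyond the grading-preserving hypothesis is needed.
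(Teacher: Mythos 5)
Your argument is correct and coincides with the paper's own proof: both compute $X^{-1}$ as a block generalized diagonal matrix, conjugate a nonzero $(r,t)$-matrix to see that the result is a nonzero $(h_r,h_t)$-matrix lying in $A_{h_r^{-1}h_t}$, and then use grading-preservation to get $h_r^{-1}h_t=r^{-1}t$, whence $h_t t^{-1}$ is constant. Your statement of the blocks of $X^{-1}$ (nonzero blocks at positions $(h_t,t)$ equal to $\widehat{X}_{t,h_t}^{-1}$) is in fact the cleaner form of what the paper writes.
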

The element $g$ which is associated to the function $h:B\to B$ has
two properties: $gB=B$ and $m_{t}=m_{gt}$. This leads to the following
definition.
\selectlanguage{american}%
\begin{defn}
\label{def:sub_groups_of_G}$H_{B}$ is the subgroup\textbf{ }of $G$
consisting of the elements $g$ such that $gB=B$. The $\mathfrak{g}$-subgroup
of $G$ is $H_{\mathfrak{g}}=\left\{ g\in H_{B}\,|\, m_{gt}=m_{t}\mbox{ for every \ensuremath{g\in B}}\right\} $.\end{defn}
\selectlanguage{english}%
\begin{prop}
The subgroup $\overline{G}$ of $GL_{m}(F)$ is equal to

\textup{
\[
\left\{ \sum_{t\in B}X_{t,gt}\in GL_{m}(F)\,|\, g\in H_{\mathfrak{g}}\right\} .
\]
In other words, $\overline{G}$ is equal to the semi-direct product
$\left(GL_{m_{1}}(F)\times\cdots\times GL_{m_{k}}(F)\right)\ltimes H_{\mathfrak{g}}$.
Therefore, the group $\widetilde{G}$ is the semi-direct product $\left(\frac{GL_{m_{1}}(F)\times\cdots\times GL_{m_{k}}(F)}{F^{\times}I_{m}}\right)\ltimes H_{\mathfrak{g}}$.}\end{prop}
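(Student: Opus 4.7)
The plan is to first establish the set-theoretic description of $\overline{G}$ and then extract from it the semidirect-product structure. The inclusion of $\overline{G}$ in the claimed set is essentially free from the two preceding lemmas: Lemma \ref{lem:zero} shows that any $X \in \overline{G}$ has the form $X = \sum_{t \in B} X_{t,h_t}$ for a size-preserving bijection $h : B \to B$, and Lemma \ref{lem:With-the-above} shows $h_t = gt$ for some $g \in G$. The two conditions $gB = B$ and $m_t = m_{gt}$ that arise along the way are exactly the defining conditions of $H_{\mathfrak{g}}$ in Definition \ref{def:sub_groups_of_G}.

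For the reverse inclusion I would take an invertible $X = \sum_{t \in B} X_{t,gt}$ with $g \in H_{\mathfrak{g}}$ and verify directly that $X^{-1} A_s X \subseteq A_s$ for every $s \in G$. As in the proof of Lemma \ref{lem:With-the-above}, the inverse of $X$ is block-supported on the positions $(gt,t)$ with blocks $(\widehat{X}_{t,gt})^{-1}$; multiplying out a homogeneous element $Y = Y_{r,r'} \in A_{r^{-1}r'}$ yields a $(g^{-1}r,\, g^{-1}r')$-matrix, whose grading degree is $(g^{-1}r)^{-1}(g^{-1}r') = r^{-1}r'$. Hence $X \in \overline{G}$.

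For the semidirect-product decomposition, let $N \leq \overline{G}$ denote the subgroup of block-diagonal elements (the case $g = e$); clearly $N \cong GL_{m_1}(F) \times \cdots \times GL_{m_k}(F)$. For each $g \in H_{\mathfrak{g}}$, define the block-permutation matrix $P_g$ whose $(t,gt)$-block is $I_{m_t}$ (well defined since $m_t = m_{gt}$) and whose other blocks vanish. A short block computation shows that $g \mapsto P_g$ is a group embedding $H_{\mathfrak{g}} \hookrightarrow \overline{G}$, and that every $X \in \overline{G}$ factors uniquely as $X = D \cdot P_g$ with $D \in N$. This yields $\overline{G} = N \rtimes H_{\mathfrak{g}}$. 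The description of $\widetilde{G}$ then follows by factoring out the kernel $F^{\times} I_m$ of the Noether--Skolem epimorphism $GL_m(F) \twoheadrightarrow \Aut(A)$; since $F^{\times} I_m \subseteq N$ is central in $\overline{G}$, the quotient simply replaces $N$ by $N / F^{\times} I_m$ without disturbing the $H_{\mathfrak{g}}$-factor.

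The main obstacle is largely notational: verifying $P_g P_h = P_{gh}$ and computing $P_g^{-1} D P_g \in N$ for $D \in N$ require careful bookkeeping with the two-tier block indexing $\gamma_i(r_i)$ introduced earlier in this section. Once the conventions are fixed, however, these reduce to elementary block-matrix multiplications and use nothing beyond the equalities $m_t = m_{gt}$ and the bijectivity of left multiplication by $g$ on $B$.
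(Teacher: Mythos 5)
Your proof is correct and follows essentially the same route as the paper: the forward inclusion is exactly the combination of \lemref{zero} and \lemref{With-the-above}, and the reverse inclusion is the same direct conjugation computation (note only the harmless indexing slip that $X^{-1}Y_{r,r'}X$ is a $(gr,gr')$-matrix rather than a $(g^{-1}r,g^{-1}r')$-matrix; either way its degree is $r^{-1}r'$, so the conclusion stands). You in fact go a bit further than the paper, which asserts the semidirect-product decomposition without argument, whereas your explicit factorization $X=D\cdot P_{g}$ with $D\in N\cong GL_{m_{1}}(F)\times\cdots\times GL_{m_{k}}(F)$ and the passage to the quotient by the central subgroup $F^{\times}I_{m}\subseteq N$ are both sound.
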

\begin{proof}
Let $GL_{m}(F)\ni X=\sum_{t\in B}X_{t,gt}$, $g\in H_{\mathfrak{g}}$.
It is obvious that $\widehat{X}_{t,tg}$ are invertible matrices for
all $t\in B$. For a $(t,s)$-matrix $Y=Y_{t,s}\in A_{t^{-1}s}$:
\[
X^{-1}Y_{t,s}X=\left(\widehat{X}_{gt,t}\right)^{-1}1_{t}Y_{t,s}X{}_{s,gs}\in A_{(gt)^{-1}gs=t^{-1}s}
\]
since $X^{-1}=\sum_{s\in B}\left(\widehat{X}_{gt,t}\right)^{-1}1_{t}$.
Hence, \textbf{$X\in\widetilde{G}$}.\textbf{ }

The other direction is \lemref{With-the-above}.\end{proof}
\begin{example}
\label{exa:three_extreme}Let us look on three extremal examples of
gradings on $A$ together with the resulting groups $\widetilde{G}.$
\begin{enumerate}
\item If $G$ is the trivial group (or the grading vector $\mathfrak{g}=(g,....,g)=(g^{m_{1=k}})$
for some $g\in G$), we obtain $H_{\mathfrak{g}}=\{e\}$ and conclude
that $\widetilde{G}=PGL_{m}(F)$. 
\item If $m=2$ and $G=\mathbb{Z}_{2}=\{0,1\}$ one can consider the grading
corresponding to the grading vector $\mathfrak{g}=(0,1)$. In this
case, $A=\begin{pmatrix}0 & 1\\
1 & 0
\end{pmatrix}$ (that is, $A_{0}=Fe_{0,0}\oplus Fe_{1,1}$ and $A_{1}=Fe_{1,0}\oplus Fe_{1,0}$)
and $\widetilde{G}=\left(\frac{F^{\times}\times\cdots\times F^{\times}}{F^{\times}I_{m}}\right)\ltimes G$. 
\item Finally, consider the case $m=2;G=\mathbb{Z}_{3}=\{0,1,2\}$ and $\mathfrak{g}=(0,1)$.
In this case $A=\begin{pmatrix}0 & 2\\
1 & 0
\end{pmatrix}$ and $\widetilde{G}=\frac{F^{\times}\times\cdots\times F^{\times}}{F^{\times}I_{m}}$. 
\end{enumerate}
\end{example}
\selectlanguage{american}%
Let $V$ \foreignlanguage{english}{be an $m$-dimensional $F$-vector
space. }Recall from the classical theory of invariants (e.g. see \cite{key-Procesi})
that the group $GL_{m}(F)$ embeds into $\End_{F}(V^{\otimes n})$
via the diagonal action 
\[
P(v_{1}\otimes\cdots\otimes v_{n})=Pv_{1}\otimes\cdots\otimes Pv_{n}
\]
 on $V^{\otimes n}$. We can therefore define an action of $GL_{m}(F)=GL(V)$
on $\End_{F}(V^{\otimes n})$ via conjugation: 
\[
X\cdot\phi=X^{-1}\circ\phi\circ X,
\]
where $X\in GL(V)$ and $\phi\in\End_{F}(V^{\otimes n})$. Turning
$\End_{F}(V^{\otimes n})$ to a $GL_{m}(F)$-module. Furthermore,
this module is isomorphic to the $GL_{m}(F)$-module $T_{n}$ (we
will discuss this isomorphism explicitly in \secref{Hilbert-series}).
Therefore, the space \foreignlanguage{english}{$T_{n}^{\widetilde{G}}=T_{n}^{\overline{G}}$
can be identified with the $F$-space $C_{\overline{G}}\left(\End_{F}(V^{\otimes n})\right)$,
}the centralizer of $\overline{G}$ in $\End_{F}(V^{\otimes n})$
i.e. \foreignlanguage{english}{
\[
T_{n}^{\widetilde{G}}\cong C_{\overline{G}}\left(\End_{F}(V^{\otimes n})\right)
\]
 (as $F$-spaces). }

\selectlanguage{english}%
\uline{Our next step} is to find explicitly a ``nice'' spanning
set for $C_{\overline{G}}\left(\End_{F}(V^{\otimes n})\right)$. In
other words, we will state and prove the first fundamental theorem
of $T_{n}$ with respect to the group $\widetilde{G}$. 

\selectlanguage{american}%
We equip the vector space $V$ with a $G$-grading $V=\oplus_{g\in B}V_{g}$,
where $V_{g}=\sp\left\{ e_{g(j)}|j=1,...,m_{g}\right\} $\foreignlanguage{english}{.}

\selectlanguage{english}%
For every $\sigma\in S_{n}$ and $\mathbf{h}\in B^{n}$ denote by
$T{}_{\sigma,\mathbf{h}}^{\prime}$ the linear operator in $\End_{F}(V^{\otimes n})$
given by\foreignlanguage{american}{
\[
T{}_{\sigma,\mathbf{h}}^{\prime}\left(v_{t_{1}}\otimes\cdots\otimes v_{t_{n}}\right)=\begin{cases}
v_{t_{\sigma(1)}}\otimes\cdots\otimes v_{t_{\sigma(n)}} & \mbox{if \ensuremath{(t_{1},...,t_{n})=\mathbf{h}}}\\
0 & \mbox{otherwise}
\end{cases},
\]
where $v_{t_{i}}\in V_{t_{i}}$. }
\selectlanguage{american}%
\begin{thm}
\label{thm:centralizer} The centralizer of $\overline{G}$ in $\End_{F}(V^{\otimes n})$
is the $F$-span of 
\[
T_{\sigma,\mathbf{h}}=\sum_{h\in H_{\mathfrak{g}}}T{}_{\sigma,h\mathbf{h}}^{\prime}
\]
where $\mathbf{h}=(h_{1},...,h_{n})\in B^{n}$ and $\sigma\in S_{n}$.\end{thm}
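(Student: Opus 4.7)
The strategy is to exploit the semidirect-product decomposition
\[
  \overline{G} \;=\; N \ltimes H_{\mathfrak{g}}, \qquad N := GL_{m_{1}}(F) \times \cdots \times GL_{m_{k}}(F),
\]
already established for $\overline{G}$. Since $N$ is normal, this gives
\[
  C_{\overline{G}}\bigl(\End_{F}(V^{\otimes n})\bigr) \;=\; C_{N}\bigl(\End_{F}(V^{\otimes n})\bigr)^{H_{\mathfrak{g}}},
\]
so I would first identify the centralizer of the connected part $N$ via a ``colored'' Schur--Weyl argument, and then average the resulting spanning set over the finite group $H_{\mathfrak{g}}$.

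\textbf{Step 1 (Colored Schur--Weyl for $N$).} Using the grading decomposition $V^{\otimes n} = \bigoplus_{\mathbf{h} \in B^{n}} V_{\mathbf{h}}$ with $V_{\mathbf{h}} = V_{h_{1}} \otimes \cdots \otimes V_{h_{n}}$, I would test against the diagonal torus $\{(\lambda_{1} I_{m_{1}},\ldots,\lambda_{k} I_{m_{k}})\} \subset N$, which acts on $V_{\mathbf{h}}$ by the scalar $\prod_{g} \lambda_{g}^{n_{g}(\mathbf{h})}$, where $n_{g}(\mathbf{h})$ counts the entries of $\mathbf{h}$ equal to $g$. These monomials separate multiplicity types, so any $N$-equivariant endomorphism sends $V_{\mathbf{h}}$ into $\bigoplus_{\mathbf{h}' \in S_{n} \cdot \mathbf{h}} V_{\mathbf{h}'}$. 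Within a fixed multiplicity type, reorder tensor factors by color to identify $V_{\mathbf{h}}$ with $\bigotimes_{g} V_{g}^{\otimes n_{g}}$; the $N$-action then splits as a tensor product of the standard $GL(V_{g})$-actions, so classical Schur--Weyl duality applied factor by factor shows that the $N$-invariants of $\End(V_{\mathbf{h}})$ are spanned by the color-preserving permutations of tensor slots. Composing with the obvious identifications between different elements of the $S_{n}$-orbit of $\mathbf{h}$ yields the whole family $\{T'_{\sigma, \mathbf{h}}\}_{\sigma \in S_{n},\, \mathbf{h} \in B^{n}}$ as a spanning set of $C_{N}(\End_{F}(V^{\otimes n}))$.

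\textbf{Step 2 (Averaging over $H_{\mathfrak{g}}$).} Because $N$ acts trivially by conjugation on its own centralizer, the action of $\overline{G}/N \cong H_{\mathfrak{g}}$ on $C_{N}(\End_{F}(V^{\otimes n}))$ is well-defined and independent of the chosen set-theoretic section. Picking for each $g \in H_{\mathfrak{g}}$ the natural representative $X_{g} \in \overline{G}$ whose blocks are identity matrices---so that $X_{g}$ permutes the basis of $V$ purely by color, $e_{s(j)} \mapsto e_{gs(j)}$---a direct computation on basis tensors using only the definition of $T'_{\sigma, \mathbf{h}}$ gives
\[
  X_{g} \cdot T'_{\sigma, \mathbf{h}} \;=\; T'_{\sigma,\, g\mathbf{h}}.
\]
Thus $H_{\mathfrak{g}}$ acts on the spanning set of Step~1 simply by the diagonal action on the index $\mathbf{h} \in B^{n}$, and taking $H_{\mathfrak{g}}$-invariants produces the orbit-sum spanning set $T_{\sigma, \mathbf{h}} = \sum_{h \in H_{\mathfrak{g}}} T'_{\sigma, h\mathbf{h}}$ claimed in the theorem.

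\textbf{Main obstacle.} The delicate point is Step~1: showing that the $T'_{\sigma, \mathbf{h}}$ actually \emph{exhaust}, rather than merely lie in, $C_{N}(\End_{F}(V^{\otimes n}))$. The torus argument reduces matters to a tensor product of classical Schur--Weyl problems, one per color class, where the assumption that $F$ is algebraically closed of characteristic zero is essential so that the isotypic decompositions of the $V_{g}^{\otimes n_{g}}$ are clean and the dimensions of the invariants match up correctly. Once Step~1 is in hand, Step~2 is essentially formal, requiring only a convenient section of $\overline{G} \to H_{\mathfrak{g}}$ and the observation that changing the section does not affect the result on the centralizer.
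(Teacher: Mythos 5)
Your proposal is correct and follows essentially the same route as the paper: the paper likewise tests first against per-block scalar matrices (your torus argument) to confine an invariant endomorphism to the $S_n$-orbit of each color vector $\mathbf{h}$, then invokes a double-centralizer/Schur--Weyl argument factor by factor (its Proposition \ref{prop:preparations_list_of_invariants}) to get the $T'_{\sigma,\mathbf{h}}$, and finally tests against the identity-block representatives $X_g$ of $H_{\mathfrak{g}}$ to force the coefficients to be constant on $H_{\mathfrak{g}}$-orbits, yielding the sums $T_{\sigma,\mathbf{h}}$. Your packaging via $C_{\overline{G}}=C_{N}(\cdot)^{H_{\mathfrak{g}}}$ is only an organizational variant of the same argument.
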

\selectlanguage{english}%
\begin{example}
\label{exa:invariants_example}Let us write explicitly $C_{\overline{G}}\left(\End_{F}(V^{\otimes n})\right)$
(assuming \thmref{centralizer}) for the three cases in \exaref{three_extreme}.
\begin{enumerate}
\item In this case $|B|=1$, so there is only one vector in $B^{n}$. Hence,
in this case we may write $T_{\sigma}$ instead of $T{}_{\sigma,\mathbf{h}}^{\prime}$
and $T{}_{\sigma,\mathbf{h}}$. Moreover, $T{}_{\sigma}\left(v_{1}\otimes\cdots\otimes v_{n}\right)=v_{\sigma(1)}\otimes\cdots\otimes v_{\sigma(n)}$
for every $v_{1},...,v_{n}\in V$. This description of $C_{\overline{G}}\left(\End_{F}(V^{\otimes n})\right)$
is well known (see \cite{key-Procesi}).
\item Here we get that $V=V_{0}\oplus V_{1}$, where $V_{0}=Fe_{0},V_{1}=Fe_{1}$.
Therefore, for $\mathbf{h}=(0,1,0)$ and $\sigma=(1\,2\,3)$ we have
$T{}_{\sigma,\mathbf{h}}^{\prime}(e_{0}\otimes e_{1}\otimes e_{0})=e_{1}\otimes e_{0}\otimes e_{0}$.
For every other tensor which do not match $\mathbf{h}$, the endomorphism
$T{}_{\sigma,\mathbf{h}}^{\prime}$ returns zero. Since $H_{\mathfrak{g}}=G=\mathbb{Z}_{2}$,
\[
T_{\sigma,\mathbf{h}}(e_{0}\otimes e_{1}\otimes e_{0})=e_{1}\otimes e_{0}\otimes e_{0};\,\,\, T_{\sigma,\mathbf{h}}(e_{1}\otimes e_{0}\otimes e_{1})=e_{0}\otimes e_{1}\otimes e_{1}.
\]

\item In this case, as in the previous case, $V=V_{0}\oplus V_{1}$, where
$V_{0}=Fe_{0},V_{1}=Fe_{1}$. Take $\mathbf{h}=(0,1,0)$ and $\sigma=(1\,2\,3)$.
Then, $T{}_{\sigma,\mathbf{h}}^{\prime}(e_{1}\otimes e_{0}\otimes e_{0})=e_{1}\otimes e_{0}\otimes e_{0}$.
However, since $H_{\mathfrak{g}}=\{e\}$, we get that 
\[
T{}_{\sigma,\mathbf{h}}=T{}_{\sigma,\mathbf{h}}^{\prime}.
\]

\end{enumerate}
\end{example}
\selectlanguage{american}%
Before we can tackle this theorem we need to do some preparations.
Let us decompose the space $V^{\otimes n}$ in the following manner:
\[
V^{\otimes n}=\bigoplus_{\mathbf{h}\in B^{n}}\left(V_{\mathbf{h}}=V_{h_{1}}\otimes\cdots\otimes V_{h_{n}}\right).
\]
It is clear that for every $\mathbf{h}\in B^{n}$, $V_{\mathbf{h}}$
is a sub-representation of the group 
\[
\overline{G^{\prime}}=GL(V_{\gamma_{1}})\times\cdots\times GL(V_{\gamma_{k}})=GL_{m_{1}}(F)\times\cdots\times GL_{m_{k}}(F)<\overline{G.}
\]
 Here $X=X_{\gamma_{1},\gamma_{1}}+\cdots+X_{\gamma_{k},\gamma_{k}}\in\overline{G^{\prime}}$
maps $v_{h_{1}}\otimes\cdots\otimes v_{h_{n}}\in\End_{F}(V_{\mathbf{h}})$
to $X_{h_{1},h_{1}}v_{h_{1}}\otimes\cdots\otimes X_{h_{n},h_{n}}v_{h_{n}}$.

To proceed we introduce the following notation: The image of the group
algebra $F\overline{G^{\prime}}$ inside $\End_{F}(V_{\mathbf{h}})$
is denoted by $\left\langle F\overline{G^{\prime}}\right\rangle _{\mathbf{h}}$.
Denote the sub-group of $S_{n}$ consisting from all the permutations
$\sigma$ such that $\mathbf{h}=\mathbf{h}^{\sigma}:=(h_{\sigma(1)},...,h_{\sigma(n)})$
by $S_{\mathbf{h}}$. Clearly $S_{\mathbf{h}}$ acts on $\End_{F}(V_{\mathbf{h}})$
by 
\[
\sigma(v_{1}\otimes\cdots\otimes v_{n})=v_{\sigma^{-1}(1)}\otimes\cdots\otimes v_{\sigma^{-1}(n)}.
\]
 Denote by $\left\langle FS_{\mathbf{h}}\right\rangle $ the image
of the group algebra $FS_{\mathbf{h}}$ inside $\End_{F}(V_{\mathbf{h}})$.
We will use the double centralizer theorem to deduce $C\left(\left\langle F\overline{G^{\prime}}\right\rangle _{\mathbf{h}}\right)=\left\langle FS_{\mathbf{h}}\right\rangle $,
by first proving:
\begin{prop}
\label{prop:preparations_list_of_invariants}$\left\langle F\overline{G^{\prime}}\right\rangle _{\mathbf{h}}=C\left(\left\langle FS_{\mathbf{h}}\right\rangle \right)$
(notice that $\left\langle F\overline{G^{\prime}}\right\rangle _{\mathbf{\mathbf{h}}}$
and $\left\langle FS_{\mathbf{h}}\right\rangle $ are both semisimple
subalgebras of the simple algebra $\End(V_{\mathbf{h}})$).\end{prop}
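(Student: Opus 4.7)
The plan is to reduce to the classical Schur-Weyl duality by regrouping the tensor factors of $V_{\mathbf{h}}$ according to the labels in $\mathbf{h}$. For each $\gamma\in B$ set $I_\gamma=\{\,i:h_i=\gamma\,\}$ and $n_\gamma=|I_\gamma|$. Permuting tensor factors yields an isomorphism
\[
V_{\mathbf{h}}\;\cong\;\bigotimes_{\gamma\in B}V_\gamma^{\otimes n_\gamma}
\]
which carries $S_{\mathbf{h}}$ onto the Young subgroup $\prod_{\gamma\in B}S_{n_\gamma}$ (each $S_{n_\gamma}$ permuting only the slots labeled by $\gamma$), and carries the diagonal action of $\overline{G^{\prime}}=\prod_\gamma GL(V_\gamma)$ to the action in which $GL(V_\gamma)$ acts diagonally on $V_\gamma^{\otimes n_\gamma}$ and trivially on the other blocks. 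Consequently, inside $\End(V_{\mathbf{h}})$, the subalgebra $\left\langle FS_{\mathbf{h}}\right\rangle$ is the tensor product over $\gamma\in B$ of the images of $FS_{n_\gamma}$ in $\End(V_\gamma^{\otimes n_\gamma})$, and analogously $\left\langle F\overline{G^{\prime}}\right\rangle_{\mathbf{h}}$ is the tensor product of the images of $FGL(V_\gamma)$ in $\End(V_\gamma^{\otimes n_\gamma})$.

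Next I would invoke the classical first fundamental theorem of invariant theory for $GL(V_\gamma)$ (as in \cite{key-Procesi}): inside $\End(V_\gamma^{\otimes n_\gamma})$ the centralizer of the image of $FS_{n_\gamma}$ equals the image of $FGL(V_\gamma)$. To lift this ``per block'' equality to the full tensor product I would use the standard tensor-product-of-centralizers principle: for subalgebras $A_i\subseteq\End(W_i)$ over an algebraically closed field,
\[
C_{\End(\bigotimes_i W_i)}\Bigl(\bigotimes_i A_i\Bigr)\;=\;\bigotimes_i C_{\End(W_i)}(A_i),
\]
which follows from the identification $\End(\bigotimes_i W_i)\cong\bigotimes_i\End(W_i)$ and the central simplicity of each $\End(W_i)$. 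Combining the Schur-Weyl input on each block with this principle yields the desired equality $C(\left\langle FS_{\mathbf{h}}\right\rangle)=\left\langle F\overline{G^{\prime}}\right\rangle_{\mathbf{h}}$.

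The inclusion $\left\langle F\overline{G^{\prime}}\right\rangle_{\mathbf{h}}\subseteq C(\left\langle FS_{\mathbf{h}}\right\rangle)$ is automatic: permutations in $S_{\mathbf{h}}$ only exchange factors carrying the same label $\gamma$, and such swaps commute with the diagonal $GL(V_\gamma)$-action on those slots. The real content is therefore the reverse inclusion, and the main obstacle I expect is not the Schur-Weyl input itself (which is standard) but the careful bookkeeping around the reindexing isomorphism and the verification of the tensor-product-of-centralizers identity in the present setting. As an alternative finishing move one may appeal to the double centralizer theorem, exploiting that $\left\langle FS_{\mathbf{h}}\right\rangle$ is semisimple in the simple ambient algebra $\End(V_{\mathbf{h}})$, to trade the explicit description for the abstract equality of centralizers.
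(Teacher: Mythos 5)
Your argument is correct, and it reaches the same conclusion as the paper by a somewhat different organization of the same underlying facts. The paper's proof also identifies $\End_F(V_{\mathbf h})$ with $\End_F(V_{h_1})\otimes\cdots\otimes\End_F(V_{h_n})$ and observes that conjugation by $S_{\mathbf h}$ becomes permutation of tensor factors, so that $C\left(\left\langle FS_{\mathbf h}\right\rangle\right)$ is the space of $S_{\mathbf h}$-invariant tensors; but instead of regrouping by block and citing the classical first fundamental theorem for each $GL(V_\gamma)$ separately, it proves a single self-contained spanning lemma (for subsets $Y_{\gamma_i}$ spanning $W_{\gamma_i}$, the $S_{\mathbf h}$-invariants of $W_{h_1}\otimes\cdots\otimes W_{h_n}$ are spanned by the $y_{h_1}\otimes\cdots\otimes y_{h_n}$), established by expanding in orbit sums and noting that the resulting coefficients are arbitrary monomials in the coordinates of the $y$'s. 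Applied to $Y_{\gamma_i}=GL_{m_i}(F)$ and $W_{\gamma_i}=\End_F(V_{\gamma_i})$ this handles all blocks at once. Your route buys modularity: the classical Schur--Weyl statement per block is a black box, and the blocks are glued with the identity $C_{\End(\otimes_i W_i)}(\otimes_i A_i)=\otimes_i C_{\End(W_i)}(A_i)$. That identity is indeed standard, though your justification via central simplicity and algebraic closedness is heavier than needed -- it holds for arbitrary unital subalgebras over any field, since $C(A\otimes 1)=C(A)\otimes\End(W_2)$ and one intersects. The only other caveat is that the "easy" inclusion you call automatic and the per-block FFT together are essentially the same polarization argument the paper writes out, so the two proofs differ more in packaging than in substance; the paper's version has the side benefit of introducing the orbit-sum basis notation reused later.
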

\selectlanguage{english}%
\begin{proof}
Consider the natural isomorphism $\Phi:\End_{F}(V_{h_{1}})\otimes\cdots\otimes\End_{F}(V_{h_{n}})\to\End_{F}(V_{\mathbf{h}})$
given by 
\[
\Phi(P_{1}\otimes\cdots\otimes P_{n})(v_{1}\otimes\cdots\otimes v_{n})=P_{1}(v_{1})\otimes\cdots\otimes P_{n}(v_{n}).
\]
(This is indeed an isomorphism since it is onto and the dimensions
of the source and the range are equal.) $S_{\mathbf{t}}$ acts on
$\End_{F}(V_{h_{1}})\otimes\cdots\otimes\End_{F}(V_{h_{n}})$ by 
\[
\sigma(P_{1}\otimes\cdots\otimes P_{n})=P_{\sigma^{-1}(1)}\otimes\cdots\otimes P_{\sigma^{-1}(n)}.
\]
Moreover, we claim that 
\[
\Phi(\sigma\cdot P_{1}\otimes\cdots\otimes P_{n})=\sigma\Phi(P_{1}\otimes\cdots\otimes P_{n})\sigma^{-1}.
\]
Indeed,
\begin{eqnarray*}
\sigma\Phi(P_{1}\otimes\cdots\otimes P_{n})\sigma^{-1}\left(v_{1}\otimes\cdots\otimes v_{n}\right) & = & \sigma\Phi(P_{1}\otimes\cdots\otimes P_{n})(v_{\sigma(1)}\otimes\cdots\otimes v_{\sigma(n)})\\
 & = & \sigma\left(P_{1}v_{\sigma(1)}\otimes\cdots\otimes P_{n}v_{\sigma(n)}\right)\\
 & = & P_{\sigma^{-1}(1)}v_{1}\otimes\cdots\otimes P_{\sigma^{-1}(n)}v_{n}\\
 & = & \Phi\left(\sigma\cdot(P_{1}\otimes\cdots\otimes P_{n}\right)(v_{1}\otimes\cdots\otimes v_{n}).
\end{eqnarray*}
Therefore, in order to find $C\left(\left\langle FS_{\mathbf{\mathbf{h}}}\right\rangle \right)$
we should find the elements in $\End_{F}(V_{h_{1}})\otimes\cdots\otimes\End_{F}(V_{h_{n}})$
which are stable under the action of $S_{\mathbf{h}}$. It is clear
that for $X=X_{\gamma_{1},\gamma_{1}}+\cdots+X_{\gamma_{k},\gamma_{k}}\in\overline{G^{\prime}}$:
\[
\Phi(\widehat{X}_{h_{1},h_{1}}\otimes\cdots\otimes\widehat{X}_{h_{n},h_{n}})=X
\]
(here we really mean: the image of $X$ inside $\End_{F}(V_{\mathbf{\mathbf{h}}})$.)
Thus, we are left with proving that the only elements of $\End_{F}(V_{h_{1}})\otimes\cdots\otimes\End_{F}(V_{h_{n}})$
which are stable under the action of $S_{\mathbf{t}}$ are the ones
spanned by $\widehat{X}_{h_{1},h_{1}}\otimes\cdots\otimes\widehat{X}_{h_{n},h_{n}}$. 

Since $S_{\mathbf{h}}$ is a finite group it is clear that the stable
vectors are exactly 
\[
\sum_{s\in S_{\mathbf{h}}}s(u),
\]
where $u$ is an element of $\End_{F}(V_{h_{1}})\otimes\cdots\otimes\End_{F}(V_{h_{n}})$.
Therefore, the proposition follows from the next lemma applied to
$Y_{\gamma_{1}}=GL_{m_{1}}(F),...,Y_{\gamma_{k}}=GL_{m_{k}}(F)$ and
$W_{\gamma_{1}}=\End_{F}(V_{\gamma_{1}}),...,W_{\gamma_{k}}=\End_{F}(V_{\gamma_{k}})$.\end{proof}
\begin{lem*}
Let $W=W_{\gamma_{1}}\oplus\cdots\oplus W_{\gamma_{k}}$ be a finite
dimensional vector space over $F$ and $Y=Y_{\gamma_{1}}\oplus\cdots\oplus Y_{\gamma_{k}}\subseteq W$
such that $FY_{\gamma_{i}}=W_{\gamma_{i}}$, then $\left(W_{h_{1}}\otimes\cdots\otimes W_{h_{n}}\right)^{S_{\mathbf{t}}}$
is spanned by $y_{h_{1}}\otimes\cdots\otimes y_{h_{n}}$, where $y_{h_{1}}\in Y_{h_{1}},...,y_{h_{n}}\in Y_{h_{n}}$.\end{lem*}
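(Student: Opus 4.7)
The plan is to reduce the lemma to a statement purely about symmetric tensors by grouping together the positions of $\mathbf{h}$ that carry the same label. For each $\gamma_j\in B$, let $I_j=\{i:h_i=\gamma_j\}$ and $n_j=|I_j|$. Under the canonical reordering isomorphism
\[
W_{h_1}\otimes\cdots\otimes W_{h_n}\;\cong\;\bigotimes_{j=1}^k W_{\gamma_j}^{\otimes n_j},
\]
the subgroup $S_{\mathbf{h}}\subseteq S_n$ decomposes as the direct product $\prod_j S_{I_j}$, each factor acting by permutation of the tensor slots inside the corresponding block $W_{\gamma_j}^{\otimes n_j}$. Since forming invariants commutes with tensor products of representations of distinct groups, one obtains
\[
\bigl(W_{h_1}\otimes\cdots\otimes W_{h_n}\bigr)^{S_{\mathbf{h}}}\;\cong\;\bigotimes_{j=1}^k\mathrm{Sym}^{n_j}(W_{\gamma_j}).
\]

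With this reduction, the task becomes showing that each factor $\mathrm{Sym}^{n_j}(W_{\gamma_j})$ is spanned by the pure powers $\{y^{\otimes n_j}:y\in Y_{\gamma_j}\}$; once this is established, tensoring across $j$ and undoing the rearrangement produces tensors $y_{h_1}\otimes\cdots\otimes y_{h_n}$ in which the factor in position $i$ depends only on the label $h_i$, which is the desired spanning set. The key tool here is the classical polarization identity in characteristic zero: for a finite-dimensional $F$-vector space $U$, the symmetric power $\mathrm{Sym}^n(U)$ is spanned by $\{u^{\otimes n}:u\in U\}$, and one can restrict $u$ to any ``sufficiently rich'' subset. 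A clean way to carry this out is a functional-analytic contradiction: if $\phi\in\mathrm{Sym}^{n_j}(W_{\gamma_j})^{*}$ vanishes on $\{y^{\otimes n_j}:y\in Y_{\gamma_j}\}$, then the map $w\mapsto\phi(w^{\otimes n_j})$ is a homogeneous polynomial of degree $n_j$ on $W_{\gamma_j}$ which vanishes on $Y_{\gamma_j}$; using the hypothesis $FY_{\gamma_i}=W_{\gamma_i}$, a Vandermonde/polarization argument in characteristic zero forces this polynomial to vanish on all of $W_{\gamma_j}$, and hence $\phi=0$.

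The main obstacle is precisely this polarization step, since the bare spanning hypothesis $FY_{\gamma_i}=W_{\gamma_i}$ is slightly delicate in general: one has to exploit the fact that in characteristic zero a homogeneous polynomial vanishing on an $F$-spanning set whose span realises every direction (as happens when $Y_{\gamma_i}$ is closed under $F^{\times}$-scaling, which is the situation of the intended application $Y_{\gamma_j}=GL_{m_j}(F)\subseteq M_{m_j}(F)=W_{\gamma_j}$) must vanish identically. Once this is in place, combining the identification of the invariants with $\bigotimes_j\mathrm{Sym}^{n_j}(W_{\gamma_j})$ with the spanning statement for each factor, and then undoing the canonical reordering back to $W_{h_1}\otimes\cdots\otimes W_{h_n}$, immediately yields the claim.
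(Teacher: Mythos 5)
Your reduction is exactly the paper's: after grouping the positions with equal labels, $S_{\mathbf{h}}=\prod_{j}S_{I_{j}}$ and the invariant space factors as $\bigotimes_{j}\bigl(W_{\gamma_{j}}^{\otimes n_{j}}\bigr)^{S_{n_{j}}}$, so the whole content is the claim that each factor is spanned by the powers $y^{\otimes n_{j}}$ with $y\in Y_{\gamma_{j}}$. The paper proves that claim by expanding $y_{\gamma_{1}}^{\otimes n_{1}}\otimes\cdots\otimes y_{\gamma_{k}}^{\otimes n_{k}}$ in the orbit-sum basis $\mathbf{w}_{r_{1,1},\dots,r_{d_{k},k}}$ and testing against a functional $f$; your functional $\phi$ and the polynomial $w\mapsto\phi(w^{\otimes n_{j}})$ are the dual formulation of the identical computation, so this is not a genuinely different route.

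The gap is in your justification of the polarization step. You claim that a homogeneous degree-$n_{j}$ polynomial vanishing on $Y_{\gamma_{j}}$ must vanish on $W_{\gamma_{j}}$ because $FY_{\gamma_{j}}=W_{\gamma_{j}}$ and $Y_{\gamma_{j}}$ is closed under $F^{\times}$-scaling. Neither condition, nor both together, is sufficient: take $W=F^{2}$ and $Y=Fe_{1}\cup Fe_{2}$, which spans and is scaling-closed, yet $p(x,y)=xy$ vanishes on $Y$. Correspondingly the lemma as literally stated is false for $Y=\{e_{1},e_{2}\}$ and $n=2$, since $e_{1}\otimes e_{1}$ and $e_{2}\otimes e_{2}$ do not span the three-dimensional space of symmetric $2$-tensors. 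Closure under scaling would only let you conclude $p\equiv 0$ if $Y$ met every line of $W$, which $GL_{m_{j}}(F)$ does not (lines through singular matrices miss it). The property that actually does the work in the intended application is that $GL_{m_{j}}(F)$ is Zariski-dense in $M_{m_{j}}(F)$ over the infinite field $F$, so no nonzero polynomial vanishes on it; with ``Zariski-dense'' in place of your scaling condition the argument closes. To be fair, the paper's own proof makes the same silent assumption at the step ``this holds for every choice of $\alpha_{i,j}$'', where the $\alpha_{i,j}$ in fact range only over coordinates of elements of $Y$, so the hypothesis $FY_{\gamma_{i}}=W_{\gamma_{i}}$ should really be strengthened to density in both versions.
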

\begin{proof}
For the sake of minimizing the notation we assume without the loss
of generality: 
\[
\gamma_{1}=h_{1}=\cdots=h_{n_{1}};\,...;\,\gamma_{k}=h_{n_{1}+\cdots+n_{k-1}+1}=\cdots=h_{n=n_{1}+\cdots+n_{k}}.
\]
 So we can write $W_{\gamma_{1}}^{\otimes n_{1}}\otimes\cdots\otimes W_{\gamma_{k}}^{\otimes n_{k}}$
and $S_{\mathbf{\mathbf{h}}}=S_{n_{1}}\times\cdots\times S_{n_{k}}$. 

Let $w_{1,i},...,w_{d_{i},i}$ be a basis of $W_{\gamma_{i}}$. Each
$S_{\mathbf{h}}$-orbit of an element from $W_{\gamma_{1}}^{\otimes n_{1}}\otimes\cdots\otimes W_{\gamma_{k}}^{\otimes n_{k}}$
contains a unique element of the form: 
\[
w_{1,1}^{\otimes r_{1,1}}\otimes\cdots\otimes w_{d_{1},1}^{\otimes r_{d_{1},1}}\otimes\cdots\otimes w_{1,k}^{\otimes r_{1,k}}\otimes\cdots\otimes w_{d_{k},k}^{\otimes r_{d_{k},k}},
\]
where $r_{i,1}+\cdots+r_{i,d_{i}}=n_{i}$. Denote by $\mathbf{w}_{r_{1,1},...,r_{d_{k},k}}$
the sum of all the elements in the orbit of the above element. It
is clear that the $\mathbf{w}_{r_{1,1},...,r_{d_{k},k}}$ are a basis
for $\left(W_{\gamma_{1}}^{\otimes n_{1}}\otimes\cdots\otimes W_{\gamma_{k}}^{\otimes n_{k}}\right)^{S_{\mathbf{t}}}$.
Suppose $f:\left(W_{\gamma_{1}}^{\otimes n_{1}}\otimes\cdots\otimes W_{\gamma_{k}}^{\otimes n_{k}}\right)^{S_{\mathbf{t}}}\to F$
is a linear functional which vanishes on $y_{\gamma_{1}}^{\otimes n_{1}}\otimes\cdots\otimes y_{t_{k}}^{\gamma_{k}}$,
where $y_{\gamma_{1}}\in Y_{\gamma_{1}},...,y_{\gamma_{k}}\in Y_{\gamma_{k}}$.
To prove the lemma we need to show that $f=0$. 

Write $y_{\gamma_{i}}=\sum\alpha_{j.i}w_{j.i}$ for $i=1,...,k$.
Then,
\[
y_{\gamma_{1}}^{\otimes n_{1}}\otimes\cdots\otimes y_{t_{k}}^{\gamma_{k}}=\sum_{r_{1,1},...,r_{d_{k},k}}\left(\left(\prod_{i,j}\alpha_{j,i}^{r_{j,i}}\right)\mathbf{w}_{r_{1,1},...,r_{d_{k},k}}\right).
\]
Therefore, by applying $f$ on this equality we obtain: 
\[
0=\sum_{r_{1,1},...,r_{d_{k},k}}\left(\left(\prod_{i,j}\alpha_{j,i}^{r_{j,i}}\right)\cdot f\left(\mathbf{w}_{r_{1,1},...,r_{d_{k},k}}\right)\right).
\]
By assumption, this holds for every choice of $\alpha_{i,j}$, so
$f\left(\mathbf{w}_{r_{1,1},...,r_{d_{k},k}}\right)=0$ for all $r_{1,1},...,r_{d_{k},k}$. 
\end{proof}
Now we are ready to face \thmref{centralizer}:
\selectlanguage{american}%
\begin{proof}
First note that if $P_{g}=\sum_{t\in B}X_{gt,t}\in\overline{G}$ and
$v_{t}\in V_{t}$, then 
\[
P_{g}v_{t}=X_{gt,t}v_{t}\in V_{gt}.
\]
Therefore, for any $v_{t_{1}}\otimes\cdots\otimes v_{t_{n}}\in V_{t_{1}}\otimes\cdots\otimes V_{t_{n}}$where
$\mathbf{t}=(t_{1},...,t_{n})\in H_{\mathfrak{g}}\mathbf{h}$: 
\begin{eqnarray*}
T_{\sigma,\mathbf{h}}P_{g}\left(v_{t_{1}}\otimes\cdots\otimes v_{t_{n}}\right) & = & T_{\sigma,\mathbf{h}}\left(X_{gt_{1},t_{1}}v_{t_{1}}\otimes\cdots\otimes X_{gt_{n},t_{n}}v_{t_{n}}\right)\\
 & = & X_{gt_{\sigma(1)},t_{\sigma(1)}}v_{t_{\sigma(1)}}\otimes\cdots\otimes X_{gt_{\sigma(n)},t_{\sigma(n)}}v_{t_{\sigma(n)}}.
\end{eqnarray*}

Whereas 
\begin{eqnarray*}
P_{g}T_{\sigma,\mathbf{h}}\left(v_{t_{1}}\otimes\cdots\otimes v_{t_{n}}\right) & = & P_{g}\left(v_{t_{\sigma(1)}}\otimes\cdots\otimes v_{t_{\sigma(n)}}\right)\\
 & = & X_{gt_{\sigma(1)},t_{\sigma(1)}}v_{t_{\sigma(1)}}\otimes\cdots\otimes X_{gt_{\sigma(n)},t_{\sigma(n)}}v_{t_{\sigma(n)}}.
\end{eqnarray*}
Moreover, if $v_{t_{1}}\otimes\cdots\otimes v_{t_{n}}\in V_{t_{1}}\otimes\cdots\otimes V_{t_{n}}$,
where $\mathbf{t}=(t_{1},...,t_{n})\notin H_{\mathfrak{g}}\mathbf{h}$
, then 
\[
T_{\sigma,\mathbf{h}}P_{g}\left(v_{t_{1}}\otimes\cdots\otimes v_{t_{n}}\right)=T_{\sigma,\mathbf{h}}\left(X_{gt_{1},t_{1}}v_{t_{1}}\otimes\cdots\otimes X_{gt_{n},t_{n}}v_{t_{n}}\right)=0
\]
since $g\mathbf{t}\notin H_{\mathfrak{g}}\mathbf{h}$. Whereas 
\[
P_{g}T_{\sigma,\mathbf{h}}\left(v_{t_{1}}\otimes\cdots\otimes v_{t_{n}}\right)=P_{g}\left(0\right)=0.
\]
The first direction is proved (i.e. $\sp\{T_{\sigma,\mathbf{h}}|\sigma\in S_{n};\mathbf{h}\in B^{n}\}\subseteq C_{\overline{G}}\left(\End_{F}(V^{\otimes n})\right)$).

For the other direction let $0\ne T\in C_{\overline{G}}\left(\End_{F}(V^{\otimes n})\right)$.
Recall that the vector space $V^{\otimes n}$ decomposes into a direct
sum 
\[
V^{\otimes n}=\bigoplus_{\mathbf{t}\in B^{n}}\left(V_{\mathbf{t}}=V_{t_{1}}\otimes\cdots\otimes V_{t_{n}}\right).
\]
So, for $v_{\mathbf{h}}=v_{h_{1}}\otimes\cdots\otimes v_{h_{n}}\in V_{h_{1}}\otimes\cdots\otimes V_{h_{n}}$
we can write: 
\[
T(v_{\mathbf{h}})=\sum_{\mathbf{\mathbf{t}}\in B^{n}}\varphi_{\mathbf{t}}(v_{\mathbf{h}})
\]
where \foreignlanguage{english}{$\varphi_{\mathbf{t}}(v_{\mathbf{h}})\in V_{\mathbf{t}}$.}

Consider a diagonal matrix $X=\sum_{t\in B}X_{t,t}\in\overline{G^{\prime}}$,
where $\widehat{X}_{t,t}=\beta_{t}I_{m_{t}}$ ($\beta_{t}\in F$).
We compute:

\[
TX\left(v_{\mathbf{h}}\right)=T\left(\beta_{h_{1}}v_{h_{1}}\otimes\cdots\otimes\beta_{h_{n}}v_{h_{n}}\right)=\left(\prod_{i=1}^{n}\beta_{h_{i}}\right)\sum_{\mathbf{\mathbf{t}}\in B^{n}}\varphi_{\mathbf{t}}(v_{\mathbf{h}})
\]
\[
XT\left(v_{\mathbf{h}}\right)=X\left(\sum_{\mathbf{\mathbf{t}}\in B^{n}}\varphi_{\mathbf{t}}(v_{\mathbf{h}})\right)=\sum_{\mathbf{\mathbf{t}}\in B^{n}}\left(\prod_{i=1}^{n}\beta_{t_{i}}\right)\varphi_{\mathbf{t}}(v_{\mathbf{h}})
\]
Since $TX=XT$, by choosing suitable $\beta_{t}$'s we deduce that
$\varphi_{\mathbf{t}}(v_{\mathbf{h}})=0$ when $\mathbf{t}=(t_{1},..,t_{n})$
is not a permutation of $\mathbf{h}$. Hence: \foreignlanguage{english}{
\[
T\left(v_{\mathbf{h}}\right)=\sum_{\sigma\in S_{n}/\mathbf{h}}\varphi_{\mathbf{h}^{\sigma}}(v_{\mathbf{h}})
\]
where $\mathbf{h^{\sigma}=}(h_{\sigma(1)},...,h_{\sigma(n)})$ and
$S_{n}/\mathbf{\mathbf{h}}$ is a representative set corresponding
to the equivalence relation $\sim_{\mathbf{h}}$ defined by $\sigma\sim_{\mathbf{h}}\tau$
if and only if $\mathbf{h^{\sigma}}=\mathbf{h^{\tau}}$. }

\selectlanguage{english}%
Next, consider a block diagonal matrix $X=\sum_{t\in B}X_{t,t}\in\overline{G^{\prime}}$.
\foreignlanguage{american}{The equality $TX=XT$} implies that: 
\[
\sum_{\sigma\in S_{n}/\mathbf{h}}X\varphi_{\mathbf{h}^{\sigma}}(v_{\mathbf{h}})=XT(v_{\mathbf{h}})=T(Xv_{\mathbf{h}})=\sum_{\sigma\in S_{n}/\mathbf{h}}\varphi_{\mathbf{h^{\sigma}}}(Xv_{\mathbf{h}}).
\]
Since $X\varphi_{\mathbf{h}^{\sigma}}(v_{\mathbf{h}}),\varphi_{\mathbf{h^{\sigma}}}(Xv_{\mathbf{h}})\in V_{\mathbf{h}^{\sigma}}$,
we get that the functions $T_{\sigma^{-1},\mathbf{h}^{\sigma}}^{\prime}\circ\varphi_{\mathbf{h}^{\sigma}}|_{V_{\mathbf{h}}}:V_{\mathbf{h}}\to V_{\mathbf{h}}$
are in $C\left(\left\langle F\overline{G^{\prime}}\right\rangle _{\mathbf{h}}\right)$.
Therefore, by \propref{preparations_list_of_invariants}, 
\[
T_{\sigma^{-1},\mathbf{h}^{\sigma}}^{\prime}\circ\varphi_{\mathbf{h}^{\sigma}}|_{V_{\mathbf{h}}}=\sum_{\tau\in S_{\mathbf{h}}}\alpha_{\tau}T_{\mbox{\ensuremath{\tau},}\mathbf{h}}^{\prime}|_{V_{\mathbf{h}}}.
\]
By multiplying the above equality from the left by $T_{\sigma,\mathbf{h}}^{\prime}$,
we deduce: 
\[
\varphi_{\mathbf{h}^{\sigma}}|_{V_{\mathbf{h}}}=\sum_{\tau\in S_{\mathbf{h}}}\alpha_{\tau}T_{\sigma\mbox{\ensuremath{\tau},}\mathbf{h}}^{\prime}|_{V_{\mathbf{h}}}.
\]
We saw that $\varphi_{\mathbf{h}^{\sigma}}$ is zero when applied
to vectors \textbf{not }from $V_{\mathbf{h}}$. Hence,
\[
\varphi_{\mathbf{h}^{\sigma}}=\sum_{\tau\in S_{\mathbf{h}}}\alpha_{\tau}T_{\sigma\mbox{\ensuremath{\tau},}\mathbf{h}}^{\prime}=\sum_{\sigma\in S_{n}/\mathbf{h}}a_{\sigma,\mathbf{h}}v_{\mathbf{h}^{\sigma}}.
\]

Now consider \foreignlanguage{american}{the equality $TX_{g}=X_{g}T$,
where }$X_{g}=\sum_{t\in B}X_{gt,t}\in\overline{G}$, $g\in H_{\mathfrak{g}}$
and $\widehat{X}_{gt,t}$ is the identity matrix\foreignlanguage{american}{
for every $t\in B$.} 
\[
TX_{g}\left(v_{\mathbf{h}}\right)=T\left(X_{gh_{1},h_{1}}v_{h_{1}}\otimes\cdots\otimes X_{gh_{n}h_{n}}v_{h_{n}}\right)
\]
\[
=\sum_{\sigma\in S_{n}/\mathbf{h}}a_{\sigma,g\mathbf{h}}\left(X_{gh_{\sigma(1)},h_{\sigma(1)}}v_{h_{\sigma(1)}}\otimes\cdots\otimes X_{gh_{\sigma(n)}h_{\sigma(n)}}v_{h_{\sigma(n)}}\right)
\]
and \linebreak{}
\[
X_{g}T\left(v_{\mathbf{h}}\right)=X_{g}\left(\sum_{\sigma\in S_{n}/\mathbf{h}}a_{\sigma,\mathbf{h}}v_{\mathbf{h}^{\sigma}}\right)
\]
\[
=\sum_{\sigma\in S_{n}/\mathbf{h}}a_{\sigma,\mathbf{h}}\left(X_{gh_{\sigma(1)},h_{\sigma(1)}}v_{h_{\sigma(1)}}\otimes\cdots\otimes X_{gh_{\sigma(n)}h_{\sigma(n)}}v_{h_{\sigma(n)}}\right)
\]
forcing $a_{\sigma,g\mathbf{h}}=a_{\mathbf{\sigma,h}}$ for every
$g\in H_{\mathfrak{g}}$. 

All in all 
\[
T=\sum_{\sigma,\mathbf{h}}a_{\sigma,\mathbf{h}}T_{\sigma,\mathbf{h}}.
\]
\end{proof}
\selectlanguage{english}%
\begin{rem}
See also Regev's paper \cite{key-1} for similar ideas and techniques. 
\end{rem}

\subsection{Approximation of $t_{n}^{G}$}

The next step is to approximate $t_{n}^{G}(A)$ using the explicit
description of $T_{n}^{\widetilde{G}}(A)$ given in \thmref{centralizer}.
This will be carried out by appending to the ungraded case. More precisely,
we interpret $t_{n}^{G}(A)$ by an expression concerning terms of
the form $t_{l}(r)=t_{l}^{\{e\}}\left(M_{r}(F)\right)$ (i.e. the
dimension of the invariant spaces of ungraded $r\times r$ matrices),
thus essentially reducing to the ungraded case. Due to the extensive
study of the sequence $t_{l}(r)$ by \foreignlanguage{american}{Procesi
\cite{key-Procesi}, Razmyslov \cite{key-Raz}}, and Regev \cite{key-Regev2},
we will be able to use this reduction to conclude an asymptotic expression
for $t_{n}^{G}(A)$. 
\begin{rem}
For purposes of simplification, we set $t_{0}(r)=1$. 
\end{rem}
First we consider a slight modification of the spaces $T_{n}^{\widetilde{G}}$. 
\selectlanguage{american}%
\begin{prop}
\label{prop:Calc} Let $I{}_{n}^{\prime}=\sp\left\{ T{}_{\sigma,\mathbf{h}}^{\prime}|\sigma\in S_{n},\mathbf{h}\in B^{n}\right\} $.
Then, 
\[
\dim_{F}\left(I{}_{n}^{\prime}\right)=\sum_{n_{1}+\cdots+n_{k}=n}{n \choose n_{1},...,n_{k}}^{2}t_{n_{1}}\left(m_{1}\right)\cdots t_{n_{k}}\left(m_{k}\right)
\]
\end{prop}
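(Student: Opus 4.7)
The plan is to exploit the natural block decomposition of $\End_F(V^{\otimes n})$ coming from $V^{\otimes n}=\bigoplus_{\mathbf{h}\in B^n}V_{\mathbf{h}}$. By its defining formula, $T'_{\sigma,\mathbf{h}}$ vanishes on every $V_{\mathbf{t}}$ with $\mathbf{t}\neq\mathbf{h}$ and maps $V_{\mathbf{h}}$ into $V_{\mathbf{h}^{\sigma}}$, so it lies in the block $\Hom(V_{\mathbf{h}},V_{\mathbf{h}^{\sigma}})$. Setting $I'_n(\mathbf{h},\mathbf{t})=\sp\{T'_{\sigma,\mathbf{h}}:\mathbf{h}^{\sigma}=\mathbf{t}\}\subseteq\Hom(V_{\mathbf{h}},V_{\mathbf{t}})$ one obtains the direct-sum decomposition $I'_n=\bigoplus_{(\mathbf{h},\mathbf{t})\in B^n\times B^n}I'_n(\mathbf{h},\mathbf{t})$, with $I'_n(\mathbf{h},\mathbf{t})=0$ unless $\mathbf{h}$ and $\mathbf{t}$ have the same content (same multiset of $B$-values). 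Thus the problem reduces to computing $\dim I'_n(\mathbf{h},\mathbf{t})$ for a single pair of a given content $(n_1,\ldots,n_k)$ and multiplying by the number $\binom{n}{n_1,\ldots,n_k}^2$ of such pairs.

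To identify $I'_n(\mathbf{h},\mathbf{t})$, fix any $\sigma_0\in S_n$ with $\mathbf{h}^{\sigma_0}=\mathbf{t}$. A direct bookkeeping argument gives $\{\sigma:\mathbf{h}^{\sigma}=\mathbf{t}\}=S_{\mathbf{h}}\sigma_0$, a single coset of size $n_1!\cdots n_k!$, and shows that $R:=T'_{\sigma_0,\mathbf{h}}|_{V_{\mathbf{h}}}:V_{\mathbf{h}}\to V_{\mathbf{t}}$ is a linear isomorphism. For $\tau\in S_{\mathbf{h}}$ a routine expansion yields $T'_{\tau\sigma_0,\mathbf{h}}|_{V_{\mathbf{h}}}=\tau'\circ R$, where $\tau'\in S_{\mathbf{t}}$ is the $\sigma_0$-conjugate of $\tau$ under the induced isomorphism $S_{\mathbf{h}}\cong S_{\mathbf{t}}$, acting on $V_{\mathbf{t}}$ by the usual factor-permutation action. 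Consequently $I'_n(\mathbf{h},\mathbf{t})=(\text{image of }FS_{\mathbf{t}}\text{ in }\End(V_{\mathbf{t}}))\circ R$, so $\dim I'_n(\mathbf{h},\mathbf{t})$ equals the dimension of the image of $FS_{\mathbf{t}}$ in $\End(V_{\mathbf{t}})$.

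Now reorder tensor factors to identify $V_{\mathbf{t}}\cong V_{\gamma_1}^{\otimes n_1}\otimes\cdots\otimes V_{\gamma_k}^{\otimes n_k}$; under this identification $S_{\mathbf{t}}\cong S_{n_1}\times\cdots\times S_{n_k}$, with the $i$-th factor permuting the $i$-th tensor block. Classical Schur--Weyl duality identifies the image of $FS_{n_i}$ in $\End(V_{\gamma_i}^{\otimes n_i})$ with the centralizer of $GL_{m_i}(F)$ there, which by the discussion preceding this subsection has dimension $t_{n_i}(m_i)$. Since the $k$ actions live in disjoint tensor blocks, the image of $FS_{n_1}\otimes\cdots\otimes FS_{n_k}$ in $\End(V_{\gamma_1}^{\otimes n_1})\otimes\cdots\otimes\End(V_{\gamma_k}^{\otimes n_k})$ is the tensor product of the individual images, giving $\dim I'_n(\mathbf{h},\mathbf{t})=\prod_i t_{n_i}(m_i)$. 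Summing over all compositions and over the $\binom{n}{n_1,\ldots,n_k}^2$ relevant pairs yields the stated formula.

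The main obstacle is the careful bookkeeping in the middle step: establishing that the correct coset is $S_{\mathbf{h}}\sigma_0$ (rather than $\sigma_0 S_{\mathbf{h}}$) and that composing with $R$ converts the span of the $T'_{\sigma,\mathbf{h}}$ into the image of $FS_{\mathbf{t}}$ acting on $V_{\mathbf{t}}$. Once this is set up, the dimension count is a standard application of Schur--Weyl duality combined with the general fact that the image of a tensor product of algebra homomorphisms equals the tensor product of the individual images.
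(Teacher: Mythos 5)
Your proof is correct and follows essentially the same route as the paper's: partition the spanning set according to the content $(n_1,\dots,n_k)$, reduce each piece to the classical ungraded counts $t_{n_i}(m_i)$ via the span of permutation operators on $V_{\gamma_1}^{\otimes n_1}\otimes\cdots\otimes V_{\gamma_k}^{\otimes n_k}$ (Schur--Weyl), and account for the factor $\binom{n}{n_1,\dots,n_k}^2$ combinatorially. The only difference is organizational -- you index by pairs of blocks $(\mathbf{h},\mathbf{t})$ in the decomposition of $\End_F(V^{\otimes n})$ and transport by the invertible intertwiner $R$, whereas the paper fixes $\mathbf{h}$ and decomposes $S_n$ into cosets of $S_{\mathbf{h}}$; your version has the mild advantage that the linear independence of the contributions from different pieces, which the paper declares obvious, is automatic from the direct-sum decomposition.
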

\begin{proof}
Recall that $B=\{\gamma_{1},...,\gamma_{k}\}$ and fix a vector $\mathbf{g}\in B^{n}$
with exactly $n_{i}$ appearances of $\gamma_{i}$ where $i=1,...,k$.
Without loss of generality assume $\mathbf{g=}(\gamma_{1}^{n_{1}},...,\gamma_{k}^{n_{k}})$,
and let $\nu S$ be a left $S=S_{\{1,..,n_{1}\}}\times\cdots\times S_{\{n_{1}+\cdots+n_{k}+1,...,n\}}$
coset of $S_{n}$. We want to show that: 
\[
\dim_{F}\left(\sp\{T{}_{\sigma,\mathbf{g}}^{\prime}|\sigma\in\nu S\}\right)=t_{n_{1}}\left(m_{1}\right)\cdots t_{n_{k}}\left(m_{k}\right).
\]

Let $L_{s}(t)$ be the subspace of $\End\left(U_{t}^{\otimes s}\right)$
($U_{t}$ is a linear space of dimension $t$) spanned by the linear
maps $T_{\tau}$ ($\tau\in S_{s})$ which send $u_{1}\otimes\cdots\otimes u_{s}$
to $u_{\tau(1)}\otimes\cdots\otimes u_{\tau(s)}$ for every $u_{1},...,u_{s}\in U_{t}$.
Consider the linear mapping: 
\[
\phi:\sp\{T{}_{\sigma,\mathbf{g}}^{\prime}|\sigma\in\nu S\}\longrightarrow L_{n_{1}}(m_{1})\otimes\cdots\otimes L_{n_{k}}(m_{k})
\]
 defined by 
\[
\phi\left(T{}_{\nu\tau,\mathbf{g}}^{\prime}\right)=T_{\tau_{1}}\otimes\cdots\otimes T_{\tau_{k}},
\]
 where $\tau_{i}$ is the restriction of $\tau$ to $S_{\{n_{1}+\cdots+n_{i-1}+1,...,n_{1}+\cdots+n_{i}\}}$
(or $S_{\{1,..,n_{1}\}}$ for $i=1$). Since this mapping is evidently
an isomorphism of linear spaces, and since $\dim_{F}L_{n_{i}}(m_{i})=t_{n_{i}}\left(m_{i}\right)$
(e.g. see \cite{key-Procesi}), we indeed obtain: 
\[
\dim_{F}\left(\sp\{T{}_{\sigma,\mathbf{g}}^{\prime}|\sigma\in\nu S\}\right)=t_{n_{1}}\left(m_{1}\right)\cdots t_{n_{k}}\left(m_{k}\right).
\]

Now, there are ${n \choose n_{1},...,n_{k}}$ left $S$ cosets in
$S_{n}$, and obviously all the spaces $\sp\{T{}_{\sigma,\mathbf{g}}^{\prime}|\sigma\in\nu S\}$
corresponding to different cosets are linearly independent. So, for
any $\mathbf{g}$ with $n_{i}$ appearances of $g_{i}$ ($i=1...k$)
we have 
\[
\dim_{F}\left(\sp\{T{}_{\sigma,\mathbf{g}}^{\prime}\}\right)={n \choose n_{1},...,n_{k}}t_{n_{1}}\left(m_{1}\right)\cdots t_{n_{k}}\left(m_{k}\right).
\]
There are ${n \choose n_{1},...,n_{k}}$ such $\mathbf{g}$'s, and
all the spaces $\sp\{T{}_{\sigma,\mathbf{g}}^{\prime}\}$ corresponding
to different $\mathbf{g}$'s are linearly independent. So, for a fixed
$k$-tuple $(n_{1},...,n_{k})$: \foreignlanguage{english}{
\begin{equation}
\dim_{F}\left(\sp\left\{ T{}_{\sigma,\mathbf{g}}^{\prime}|\mbox{ \ensuremath{\mathbf{g}} has \ensuremath{n_{i}} appearances of \ensuremath{\gamma_{i}}}\right\} \right)={n \choose n_{1},...,n_{k}}^{2}t_{n_{1}}\left(m_{1}\right)\cdots t_{n_{k}}\left(m_{k}\right)\label{eq:ofir}
\end{equation}
Summing up on all the} possible $k$-tuples gives us: 
\[
\sum_{n_{1}+\cdots+n_{k}=n}{n \choose n_{1},...,n_{k}}^{2}t_{n_{1}}\left(m_{1}\right)\cdots t_{n_{k}}\left(m_{k}\right).
\]
\end{proof}
\begin{cor}
\label{cor:We-have-where}We have 
\[
t_{n}^{G}(A)=\frac{1}{|H_{\mathfrak{g}}|}\dim_{F}\left(I{}_{n}^{\prime}\right)=\frac{1}{|H_{\mathfrak{g}}|}\sum_{n_{1}+\cdots+n_{k}=n}{n \choose n_{1},...,n_{k}}^{2}t_{n_{1}}\left(m_{1}\right)\cdots t_{n_{k}}\left(m_{k}\right).
\]
\end{cor}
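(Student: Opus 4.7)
The second equality in the corollary is exactly Proposition~\ref{prop:Calc}, so the real content is the first equality $\dim_F T_n^{\widetilde{G}}(A) = \frac{1}{|H_{\mathfrak{g}}|} \dim_F I_n'$. The plan is to realize $T_n^{\widetilde{G}}$ as the fixed-point space $(I_n')^{H_{\mathfrak{g}}}$ for a natural $H_{\mathfrak{g}}$-action on $I_n'$, and then to compute its dimension using that this action is free on the natural index set $B^n$.

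For $g \in H_{\mathfrak{g}}$, let $P_g = \sum_{t \in B} X_{gt,t} \in \overline{G}$ with each block $\widehat{X}_{gt,t} = I_{m_t}$, exactly as in the proof of Theorem~\ref{thm:centralizer}. A direct computation with the block structure shows that conjugation by $P_g$ preserves $I_n'$ and sends $T'_{\sigma,\mathbf{h}}$ to $T'_{\sigma, g\mathbf{h}}$. Writing $J_{\mathbf{h}} = \mathrm{Span}_F\{T'_{\sigma,\mathbf{h}} : \sigma \in S_n\}$, the decomposition $V^{\otimes n} = \bigoplus_{\mathbf{h} \in B^n} V_{\mathbf{h}}$ together with the observation that $T'_{\sigma,\mathbf{h}}$ vanishes outside $V_{\mathbf{h}}$ yields the direct sum
\[
I_n' = \bigoplus_{\mathbf{h} \in B^n} J_{\mathbf{h}},
\]
and this decomposition is permuted by $H_{\mathfrak{g}}$ according to $g \cdot J_{\mathbf{h}} = J_{g\mathbf{h}}$, via $F$-linear isomorphisms.

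Now the key observation: the left translation action of $H_{\mathfrak{g}} \leq G$ on $G$ is free, hence so is the induced diagonal action on $B^n$. Every orbit therefore has size exactly $|H_{\mathfrak{g}}|$. Grouping summands by orbit, an element $\sum_{\mathbf{h}} x_{\mathbf{h}}$ (with $x_{\mathbf{h}} \in J_{\mathbf{h}}$) is $H_{\mathfrak{g}}$-invariant iff $x_{g\mathbf{h}} = g \cdot x_{\mathbf{h}}$ for every $g \in H_{\mathfrak{g}}$; choosing one representative $\mathbf{h}_0$ per orbit $\mathcal{O}$ and $x_{\mathbf{h}_0} \in J_{\mathbf{h}_0}$ freely, the remaining $x_{\mathbf{h}}$ are determined. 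Thus
\[
\dim_F (I_n')^{H_{\mathfrak{g}}} = \sum_{\mathcal{O}} \dim_F J_{\mathbf{h}_0} = \frac{1}{|H_{\mathfrak{g}}|} \dim_F I_n'.
\]

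To finish, identify $T_n^{\widetilde{G}} = (I_n')^{H_{\mathfrak{g}}}$. The inclusion $\subseteq$ is immediate from Theorem~\ref{thm:centralizer}, since its spanning elements $T_{\sigma,\mathbf{h}} = \sum_{g \in H_{\mathfrak{g}}} T'_{\sigma, g\mathbf{h}}$ are the orbit sums, which clearly lie in $I_n'$ and are $H_{\mathfrak{g}}$-invariant. The reverse inclusion follows from the orbit analysis above: any $H_{\mathfrak{g}}$-invariant in $I_n'$ is an $F$-linear combination of these orbit sums. The one point requiring care is checking that the direct sum decomposition $I_n' = \bigoplus_{\mathbf{h}} J_{\mathbf{h}}$ is genuine and $H_{\mathfrak{g}}$-compatible; once that is in place, freeness of left multiplication in a group does the rest.
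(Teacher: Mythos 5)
Your proof is correct and fills in exactly the argument the paper leaves implicit: the corollary is stated without proof as an immediate consequence of Theorem \thmref{centralizer} and Proposition \propref{Calc}, and the intended justification is precisely your orbit count (indeed the paper makes the same decomposition explicit later, in the ``final step'' of \secref{Hilbert-series}, writing $I_n'=\bigoplus_{g\in H_{\mathfrak{g}}}L_g$ with $I_n\cong L_g$). Your care in realizing the translation $T'_{\sigma,\mathbf{h}}\mapsto T'_{\sigma,g\mathbf{h}}$ as conjugation by $P_g$, so that it is a genuine linear isomorphism $J_{\mathbf{h}}\to J_{g\mathbf{h}}$ respecting the linear relations among the $T'_{\sigma,\mathbf{h}}$, together with the freeness of left translation on $B^n$, is exactly what is needed.
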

\begin{rem}
\label{rem:For-future-use}For future use note that by using \ref{eq:ofir},
one obtains
\[
\dim_{F}\left(\sp\left\{ T{}_{\sigma,\mathbf{g}}|\mbox{ \ensuremath{\mathbf{g}} has \ensuremath{n_{i}}appearances of \ensuremath{\gamma_{i}}}\right\} \right)={n \choose n_{1},...,n_{k}}^{2}t_{n_{1}}\left(m_{1}\right)\cdots t_{n_{k}}\left(m_{k}\right).
\]

\end{rem}
To calculate the asymptotics of the expression above we need the following
result of Regev and Beckner (\cite{Beckner=000026Regev} Theorem 1.2).
Here is a simplified version:
\begin{thm}
\label{thm:B-R} Let $p=(p_{1},...,p_{k})\in\mathbb{Q}^{k}$ such
that $\sum p_{i}=1$, and suppose $F(x_{1},...,x_{k})$ is a continuous
homogeneous function of degree $d$ with $0<F(p)<\infty.$ Then for
$\rho=d-\frac{1}{2}(\beta-1)(k-1)$ and $\beta>0$ 
\begin{eqnarray*}
 & \sum_{\begin{array}{c}
n_{1}+\cdots+n_{k}=n\\
n_{i}\ne0
\end{array}}\left[{n \choose n_{1},...,n_{k}}p_{1}^{n_{1}}\cdots p_{k}^{n_{k}}\right]^{\beta}F(n_{1},...,n_{k})\\
 & \sim n^{\rho}\beta^{-\frac{k-1}{2}}\left(\frac{1}{\sqrt{2\pi}}\right)^{(\beta-1)(k-1)}F(p)\left(\prod_{j=1}^{k}p_{j}\right)^{\frac{1-\beta}{2}}
\end{eqnarray*}
\end{thm}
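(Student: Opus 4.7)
The plan is to establish the asymptotic by the classical Laplace / saddle-point method: localize the sum near the barycenter $n_i \approx np_i$, approximate the multinomial by its Gaussian envelope via Stirling, use the homogeneity of $F$ to pull $F(p)$ out of the sum at leading order, and evaluate the resulting Gaussian integral.

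The first step is Stirling. For $n_i \geq 1$ one has $\binom{n}{n_1,\ldots,n_k} \sim n^{n+1/2}/\bigl((2\pi)^{(k-1)/2}\prod n_i^{n_i+1/2}\bigr)$. Writing $n_i = np_i + x_i$ with $\sum x_i = 0$ and expanding $\log(n_i/(np_i))$ to second order yields, after a short manipulation,
\[
\binom{n}{n_1,\ldots,n_k} p_1^{n_1}\cdots p_k^{n_k} \sim \frac{1}{(2\pi n)^{(k-1)/2}\sqrt{p_1 \cdots p_k}}\, \exp\!\left(-\sum_{i=1}^{k} \frac{x_i^2}{2 n p_i}\right)
\]
uniformly for $|x_i| \leq n^{1/2+\varepsilon}$; raising to the $\beta$-th power scales the quadratic form by $\beta$. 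Meanwhile, continuity and homogeneity of $F$ give $F(n_1,\ldots,n_k) = n^d F(n_1/n,\ldots,n_k/n) = n^d F(p) + o(n^d)$ uniformly in the same region, so $F(p)$ comes out of the sum.

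I would next split the sum into a bulk $|x_i|\leq n^{1/2+\varepsilon}$ and a tail. Standard large-deviation bounds for the multinomial distribution, with the polynomial weight $F(n_1,\ldots,n_k)$ absorbed, make the tail negligible. On the bulk, the sum is a Riemann sum for a smooth, rapidly decaying function and is approximated by
\[
\int_{\mathbb{R}^{k-1}} \exp\!\left(-\tfrac{\beta}{2n}\, x^{T} M x\right) dx_1 \cdots dx_{k-1},
\]
where $x_k = -\sum_{i<k} x_i$ has been eliminated and $M$ is the $(k-1)\times(k-1)$ matrix with $M_{ii} = 1/p_i + 1/p_k$ and $M_{ij} = 1/p_k$ for $i \neq j$. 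Writing $M$ as $\mathrm{diag}(1/p_1,\ldots,1/p_{k-1})$ plus a rank-one term $(1/p_k) J$ (with $J$ the all-ones matrix) and using $\sum_{i=1}^{k-1} p_i = 1 - p_k$, a one-line determinant computation gives $\det M = 1/(p_1 \cdots p_k)$, so the integral equals $(2\pi n/\beta)^{(k-1)/2} \sqrt{p_1 \cdots p_k}$.

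Combining the three ingredients --- the $\beta$-th power of the Stirling prefactor $(2\pi n)^{-(k-1)/2}(p_1\cdots p_k)^{-1/2}$, the Gaussian integral, and the factor $n^d F(p)$ --- and collecting powers of $n$, $2\pi$, $\beta$ and $p_i$ recovers exactly the stated right-hand side; in particular $\rho = d - (\beta-1)(k-1)/2$ emerges from $n^d \cdot n^{-\beta(k-1)/2} \cdot n^{(k-1)/2}$. The main technical obstacle will be uniform error control: bounding the cubic and higher terms of the Taylor expansion in Step 1 uniformly for $|x_i|\leq n^{1/2+\varepsilon}$, justifying the Riemann-sum-to-integral replacement with error of smaller order than the predicted leading term, and deploying a concentration inequality for the multinomial sharp enough to defeat the polynomial weight $F$ on the tail. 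The algebraic cancellation via $\sum p_i = 1$ that produces $\det M = 1/\prod p_i$ is the crucial non-routine simplification; everything else is bookkeeping.
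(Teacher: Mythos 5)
There is no in-paper proof to compare against: the paper quotes this statement as a ``simplified version'' of Theorem~1.2 of Beckner--Regev \cite{Beckner=000026Regev} and does not prove it. Your Laplace/local-limit sketch is the standard route to such estimates (and is in the same spirit as the probabilistic argument in the cited source), and your bookkeeping is correct: the determinant identity $\det M = 1/(p_1\cdots p_k)$ checks out via the rank-one perturbation you describe, and collecting the powers of $n$, $2\pi$, $\beta$ and $\prod p_j$ from the $\beta$-th power of the Stirling prefactor, the Gaussian integral $\left(2\pi n/\beta\right)^{(k-1)/2}\sqrt{p_1\cdots p_k}$, and the factor $n^{d}F(p)$ reproduces the right-hand side exactly, including $\rho=d-\tfrac{1}{2}(\beta-1)(k-1)$.

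The one step that is genuinely not routine is your tail estimate. You invoke ``standard large-deviation bounds \ldots with the polynomial weight $F$ absorbed,'' but $F$ is only assumed continuous and homogeneous, and in the paper's own application $F(x)=\prod_i x_i^{(1-m_i^2)/2}$ blows up at the boundary of the simplex (which is exactly why the sum excludes $n_i=0$). Indeed, as literally stated the theorem is false for arbitrary continuous homogeneous $F$: writing $F(x)=\|x\|^{d}g(x/\|x\|)$, one may choose $g$ continuous on the open simplex but blowing up super-exponentially near a face, and then the lattice points with some $n_i=1$ already overwhelm the claimed main term. So the tail argument needs, and should state, a growth hypothesis on $F$ near the faces --- for instance that $F$ is bounded by a fixed power of $n$ on the lattice points with all $n_i\geq 1$, which holds for products of powers $\prod x_i^{c_i}$ and in particular for the $F$ used in Corollary~\ref{cor:codim-asymp}, where the exponential decay of $\left[\binom{n}{n_1,\dots,n_k}\prod p_i^{n_i}\right]^{\beta}$ away from the bulk then wins. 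With that hypothesis made explicit, your outline is sound; the remaining items (uniform cubic error control in Stirling, Riemann-sum-to-integral replacement on the lattice $\sum x_i=0$) are the routine parts you already flag.
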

\selectlanguage{english}%
\begin{cor}
\label{cor:codim-asymp}Let $G$ be a group, and $F$ a field of characteristic
zero. Let $A$ be the $F$-algebra of $m\times m$ matrices with elementary
$G$-grading, and let $\mathfrak{g}=(\gamma_{1}^{m_{1}},...,\gamma_{k}^{m_{k}})$
be the grading vector where all the $g_{i}$'s are distinct. Then
\[
t_{n}^{G}(A)=\alpha n^{-\frac{\sum_{i=1}^{k}m_{i}^{2}-1}{2}}m^{2n}=\alpha n^{\frac{1-dim_{F}A_{e}}{2}}(\dim_{F}A)^{n}
\]

where
\[
\alpha=\frac{1}{|H_{\mathfrak{g}}|}m^{\frac{\sum_{i=1}^{k}m_{i}^{2}}{2}}\left(\frac{1}{\sqrt{2\pi}}\right)^{m-1}\left(\frac{1}{2}\right)^{\frac{\sum m_{i}^{2}-1}{2}}\prod_{i=1}^{k}\left(1!2!\cdots(m_{i}-1)!m_{i}^{-\frac{1}{2}}\right).
\]
\end{cor}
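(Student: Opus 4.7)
The plan is to combine the explicit summation formula of \corref{We-have-where} with the classical Procesi--Razmyslov--Regev asymptotic for the ungraded invariants, and then invoke \thmref{B-R} on the resulting multinomial sum. Recall that for each fixed $r$ one has $t_l(r) \sim D_r\, l^{-(r^2-1)/2}\, r^{2l}$ for an explicit positive constant $D_r$ built out of $1!\,2!\cdots(r-1)!$ and appropriate powers of $r$, $1/2$ and $1/\sqrt{2\pi}$ (see e.g.\ \cite{key-Procesi,key-Raz,key-Regev2}).

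Substituting this asymptotic into each factor $t_{n_i}(m_i)$ of \corref{We-have-where} and setting $p_i = m_i/m$ (so that $\sum p_i = 1$), one pulls $\prod m_i^{2n_i} = m^{2n} \prod p_i^{2n_i}$ out of the sum, producing the advertised factor $m^{2n} = (\dim_F A)^n$. The residual sum becomes
\[
\sum_{n_1+\cdots+n_k=n} \left[ \binom{n}{n_1,\ldots,n_k} \prod_i p_i^{n_i} \right]^{2} F(n_1,\ldots,n_k),
\]
with $F(x_1,\ldots,x_k) = \prod_i x_i^{-(m_i^2-1)/2}$, continuous and homogeneous of degree $d = -\tfrac{1}{2} \sum (m_i^2-1)$. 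Contributions from tuples with some $n_i = 0$ give an exponentially smaller order of growth (effective base $\sum_{j\ne i} m_j < m$) and may be neglected.

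Applying \thmref{B-R} with $\beta = 2$ gives polynomial exponent
\[
\rho = d - \tfrac{1}{2}(k-1) = -\tfrac{1}{2}\Bigl(\sum m_i^2 - 1\Bigr) = \tfrac{1-\dim_F A_e}{2},
\]
using $\dim_F A_e = \sum m_i^2$; this matches the claim. The multiplicative constant from \thmref{B-R} is $2^{-(k-1)/2}(2\pi)^{-(k-1)/2}\, F(p)\,\prod_j p_j^{-1/2}$, with $F(p) = \prod_i(m_i/m)^{-(m_i^2-1)/2}$ and $\prod_j p_j^{-1/2} = \prod_i(m_i/m)^{-1/2}$. After multiplying by $|H_{\mathfrak{g}}|^{-1}\prod_i D_{m_i}$ and collecting like powers of $m$, $m_i$, $2$, and $1/\sqrt{2\pi}$, the expression collapses to the stated $\alpha$.

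The main obstacles are two-fold. First, the substitution of the asymptotic $t_l(r) \sim D_r\, l^{-(r^2-1)/2} r^{2l}$ into the sum is not purely formal, since we apply it simultaneously to many $n_i$; one must argue that the asymptotic is uniform enough in the relevant range (where $n_i \approx p_i n \to \infty$) for the error to be absorbed into the $o(1)$ of \thmref{B-R}, as the contributions from small $n_i$ are already exponentially subdominant. Second, the identification of the final constant is a careful bookkeeping exercise: verifying that the $D_{m_i}$ contributions combine with the B-R constants and with the $p_i = m_i/m$ substitutions to produce exactly the factorial product $\prod_i(1!\,2!\cdots(m_i-1)!\,m_i^{-1/2})$, the factor $(1/\sqrt{2\pi})^{m-1}$, and the factor $(1/2)^{(\sum m_i^2-1)/2}$ displayed in the claim.
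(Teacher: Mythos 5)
Your proposal is correct and follows essentially the same route as the paper: substitute Regev's asymptotic $t_l(s)\sim\beta_s\, l^{-(s^2-1)/2}s^{2l}$ into the sum of \corref{We-have-where}, factor out $m^{2n}$ via $p_i=m_i/m$, and apply \thmref{B-R} with $\beta=2$ and $F(x)=\prod_i x_i^{(1-m_i^2)/2}$ to obtain the polynomial exponent and constant. Your explicit attention to the uniformity of the substitution and to the negligibility of tuples with some $n_i=0$ is the content the paper compresses into ``standard calculus arguments.''
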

\begin{proof}
First note that by \cite{key-Regev2} $t_{l}\left(s\right)\sim\beta_{s}l^{-\frac{s^{2}-1}{2}}s^{2l}$
where 
\[
\beta_{s}=\left(\frac{1}{\sqrt{2\pi}}\right)^{s-1}\left(\frac{1}{2}\right)^{\frac{s^{2}-1}{2}}1!2!\cdots(s-1)!s^{\frac{s^{2}}{2}}.
\]
Using standard calculus arguments, we deduce from \propref{Calc}
that
\[
t_{n}^{G}(A)\sim\frac{1}{|H_{\mathfrak{g}}|}\sum_{\begin{array}{c}
n_{1}+\cdots+n_{k}=n\\
n_{i}\ne0
\end{array}}\left({n \choose n_{1},...,n_{k}}^{2}\prod_{i=1}^{k}\beta_{m_{i}}n_{i}^{-\frac{m_{i}^{2}-1}{2}}m_{i}^{2n_{i}}\right).
\]
In other words $t_{n}^{G}(A)$ is asymptotically equal to 
\[
\frac{1}{|H_{\mathfrak{g}}|}\left(\prod_{i=1}^{k}\beta_{m_{i}}m^{2n}\right)\cdot\sum_{\begin{array}{c}
n_{1}+\cdots+n_{k}=n\\
n_{i}\ne0
\end{array}}\left(\left[{n \choose n_{1},...,n_{k}}\left(\frac{m_{1}}{m}\right)^{n_{1}}\cdots\left(\frac{m_{k}}{m}\right)^{n_{k}}\right]^{2}\prod_{i=1}^{k}n_{i}^{\frac{1-m_{i}^{2}}{2}}\right).
\]
Applying \thmref{B-R} on the last expression with $p=\left(\frac{m_{1}}{m},...,\frac{m_{k}}{m}\right)$
and $F(x_{1},...,x_{k})=\prod_{i=1}^{k}x_{i}^{\frac{1-m_{i}^{2}}{2}}$
(which is a \foreignlanguage{american}{continuous homogeneous function
of degree }$d=\frac{k-\sum_{i=1}^{k}m_{i}^{2}}{2}$) gives 
\[
t_{n}^{G}(A)\sim\frac{1}{|H_{\mathfrak{g}}|}\prod_{i=1}^{k}\beta_{m_{i}}\beta n^{\frac{1-\sum_{i=1}^{k}m_{i}^{2}}{2}}m^{2n}
\]
where (by \thmref{B-R})
\[
\gamma=2^{\frac{1-k}{2}}\left(\frac{1}{\sqrt{2\pi}}\right)^{(k-1)}F\left(\frac{m_{1}}{m},...,\frac{m_{k}}{m}\right)\left(\prod_{j=1}^{k}\frac{m_{j}}{m}\right)^{-\frac{1}{2}}
\]
A simple calculation shows $\alpha=\frac{1}{|H_{\mathfrak{g}}|}\beta\prod_{i=1}^{k}\beta_{m_{i}}$. 
\end{proof}

\section{\label{sec:-intro-rep}$GL_{m}(F)$ representations }

In order to show that the sequence $t_{n}^{G}(A)$ approximates the
codimension sequence, we need to use $GL_{m}(F)$-representation theory.
In this section we will recall its basic constructions and results.

\subsection{Rational and polynomial $GL_{m}(F)$-representations}

A vector space $Y=Sp_{F}\{y_{1},y_{2},...\}$ is a \emph{polynomial}
\textit{\emph{(respectively,}}\textit{ rational}\textit{\emph{)}}
$GL_{m}(F)$-module if for every $j>0$ and $P\in GL_{m}(F)$, the
$GL_{m}(F)$-action is given by $P\cdot y_{j}=\sum f_{i,j}(P)y_{i}$
(a finite sum), where $f_{i,j}(P)$ are polynomials (rational functions)
in the entries of the matrix $P$. We say that \emph{$Y$ is $r$-homogeneous}
if all the $f_{i,j}$'s are homogeneous of total degree $r$. It is
possible to decompose a $GL_{m}(F)$-module $Y$ to a direct sum of
its homogenous components: 
\[
Y=\bigoplus_{r\geq0}Y^{(r)}.
\]
Here $Y^{(r)}$ is the $GL_{m}(F)$-submodule consisting of all $y\in Y$
such that $P\cdot y=\sum f_{i,y}(P)y_{i}$, where $f_{i,y}(P)$ is
a homogenous function of degree $r$.

For $\alpha=(\alpha_{1},...,\alpha_{m})$, where the $\alpha_{i}$
are (non negative) integers, one defines \emph{the weight space of
$Y$ associated to $\alpha$ }by $Y^{\alpha}=\left\{ y\in Y|\textrm{diag}(p_{1},...,p_{m})\cdot y=p_{1}^{\alpha_{1}}\cdots p_{m}^{\alpha_{m}}y\right\} $.
It is known that the weight spaces satisfy: 
\[
Y=\bigoplus_{\alpha}Y^{\alpha}.
\]
There is a simple connection between the weight spaces and the homogenous
components: 
\[
Y^{(r)}=\bigoplus_{\alpha_{1}+\cdots\alpha_{m}=r}Y^{\alpha=(\alpha_{1},...,\alpha_{m})}.
\]

Suppose that $Y$ is a finite dimensional $GL_{m}(F)$-module and
$\chi_{Y}:GL_{m}(F)\longrightarrow F^{*}$ is the character of $Y$.
Then, from the representation theory of $GL_{n}(F)$, it is known
that the polynomial (rational function) 
\[
H_{Y}(t_{1},...,t_{m})=\sum_{\alpha}\left(\dim_{F}Y^{\alpha}\right)t_{1}^{\alpha_{1}}\cdots t_{m}^{\alpha_{m}}\in\mathbb{Z}\left[t_{1},...,t_{m}\right]\,\,\left(\mathbb{Z}\left[t_{1}^{\pm1},...,t_{m}^{\pm1}\right])\right)
\]
 is symmetric and 
\[
\chi_{Y}(P)=H_{Y}(p_{1},...,p_{m}),
\]
where $p_{1},...,p_{m}$ are the eigenvalues of $P\in GL_{m}(F)$.
This polynomial (rational function) is called the \textit{Hilbert
}\textit{\emph{(or}}\textit{ Poincare}\textit{\emph{)}}\textit{ series}
of $Y$. Therefore, if we denote by $\mathfrak{R}(GL_{m}(F))$ the
Grothendieck ring of finite dimensional polynomial (rational) $GL_{m}(F)$-modules,
we get a ring homomorphism $\Psi$ from $\mathfrak{R}(GL_{m}(F))$
to $\mathbb{Z}\left[t_{1},...,t_{m}\right]^{S_{n}}$ ($\mathbb{Z}\left[t_{1}^{\pm1},...,t_{m}^{\pm1}\right]^{S_{n}}$)
which sends $[Y]\in\mathfrak{R}(GL_{m}(F))$ to $H_{Y}$. This is,
in fact, an isomorphism. 

It is easy to extend the definition of a Hilbert function to the case
where $Y$ is not necessary finite dimensional but its homogenous
components are. Just consider the formal sum
\[
H_{Y}=\sum_{r\geq0}H_{Y^{(r)}}.
\]

\subsection{Schur functions}

The Hilbert series is known to be symmetric in $t_{1},...,t_{m}$,
so we recall an important $F$-linear \uline{basis} of the space
of symmetric polynomials,  the \emph{Schur functions}. For convenience
sake, we use the following combinatorial definitions.

A \textit{partition }is a finite sequence of integers $\lambda=(\lambda_{1},...,\lambda_{k})$
such that $\lambda_{1}\geq\cdots\geq\lambda_{k}>0$. The \emph{height}
\emph{of }$\lambda$ denoted by $\mbox{ht}(\lambda)$ is the integer
$k$. The set of all partitions is denoted by $\Lambda$, and the
set of all partitions of height less or equal to $k$ is denoted by
$\Lambda^{k}$.

We say that $\lambda$ is a\textit{ partition of $n\in\mathbb{N}$}
if $\sum_{i=1}^{k}\lambda_{i}=n$. In this case we write $\lambda\vdash n$
or $|\lambda|=n$. 

Let $\lambda=(\lambda_{1},..,\lambda_{k})$ be a partition. The \textit{Young
diagram} associated to $\lambda$ is the finite subset of $\mathbb{Z}\times\mathbb{Z}$
defined as $D_{\lambda}=\left\{ (i,j)\in\mathbb{Z}\times\mathbb{Z}|i=1,..,k\,,\, j=1,..,\lambda_{i}\right\} $.
We may regard $D_{\lambda}$ as $k$ arrays of boxes where the top
one is of length $\lambda_{1}$, the second of length $\lambda_{2}$,
etc. For example 

\begin{center}
$D_{(4,3,3,1)}=$ %
\begin{tabular}{|c|c|c|c}
\hline 
~ & ~ & ~ & \multicolumn{1}{c|}{~~}\tabularnewline
\hline 
 &  &  & \tabularnewline
\cline{1-3} 
 &  &  & \tabularnewline
\cline{1-3} 
 & \multicolumn{1}{c}{} & \multicolumn{1}{c}{} & \tabularnewline
\cline{1-1} 
\end{tabular}
\par\end{center}

\textit{A Schur function} $s_{\lambda}(t_{1},...,t_{m})$ is a symmetric
polynomial such that the coefficient of $t_{1}^{a_{1}}\cdots t_{m}^{a_{m}}$
is equal to the number of ways to insert $a_{1}$ ones, $a_{2}$ twos,
... , and $a_{m}$ $m$'s in $D_{\lambda}$ such that in every row
the numbers are non-decreasing, and in any column the numbers are
strictly increasing. Note that although this definition is not the
classical one, it is equivalent to it (e.g. see \cite{key-Bruce}). 
\begin{example*}

\begin{enumerate}
\item If $\lambda$ is a partition of height one, i.e $\lambda=(\lambda_{1})$,
then the corresponding Schur function is 
\[
s_{(\lambda_{1})}(t_{1},...,t_{m})=\sum_{a_{1}+\cdots+a_{m}=\lambda_{1}}t_{1}^{a_{1}}\cdots t_{m}^{a_{m}}.
\]

\item If $\lambda=(2,1)$ and $m=2$ we have: 
\[
s_{(2,1)}(t_{1},t_{2})=t_{1}^{2}t_{2}+t_{1}t_{2}^{2},
\]
since the only two ways to set ones and twos in $D_{(2,1)}$ are %
\begin{tabular}{|c|c|}
\hline 
1 & 1\tabularnewline
\hline 
2 & \multicolumn{1}{c}{}\tabularnewline
\cline{1-1} 
\end{tabular} and %
\begin{tabular}{|c|c|}
\hline 
1 & 2\tabularnewline
\hline 
2 & \multicolumn{1}{c}{}\tabularnewline
\cline{1-1} 
\end{tabular}. 
\end{enumerate}
\end{example*}
It turns out that the Schur functions correspond under $\Phi$ to
the irreducible $GL_{m}(F)$-representations. Therefore, if we define
a scalar product $<,>$ on $\mathbb{Z}[t_{1},...,t_{m}]^{S_{m}}$
by choosing $\{s_{\lambda}|\,\lambda\in\Lambda^{m}\}$ to be an orthonormal
basis, we make $\Phi$ into an \textbf{isometry}.

\subsection{Generalization to representations of products of general linear groups}

Similarly we can consider $GL_{m_{1}}(F)\times\cdots\times GL_{m_{k}}(F)$,
where $m=m_{1}+\cdots+m_{k}$, and obtain an isomorphic isometry:
\[
\mathfrak{\mathbf{\Phi}:R}(GL_{m_{1}}(F)\times\cdots\times GL_{m_{k}}(F))\to\mathbb{Z}[t_{1},...,t_{m}]^{S_{m_{1}}\times\cdots\times S_{m_{k}}},
\]
given by $[Y]\mapsto H_{Y}(t_{1},...,t_{m})=\sum_{\alpha}\left(\dim_{F}Y^{\alpha}\right)t_{1}^{\alpha_{1}}\cdots t_{m}^{\alpha_{m}}$,
where 
\[
Y^{\alpha}=\left\{ y\in Y|\textrm{diag}(p_{1},...,p_{m_{1}})\times\cdots\times\textrm{diag}(p_{m-m_{k}+1},...,p_{m})\cdot y=p_{1}^{\alpha_{1}}\cdots p_{m}^{\alpha_{m}}y\right\} 
\]
and the scalar product $<,>$ on the right ring is determined by choosing:
\[
\left\{ s_{\lambda_{1}}(t_{1},...,t_{m_{1}})\cdots s_{\lambda_{k}}(t_{m-m_{k}+1},...,t_{m})|\lambda_{1}\in\Lambda^{m_{1}},...,\lambda_{k}\in\Lambda^{m_{k}}\right\} ,
\]
to be an orthonormal basis.

\subsection{Connection to representations of $S_{m}$}

There is a connection between the representations of $S_{m}$ and
$GL_{m}(F)$. Without getting into much details, there is an isometry:
\[
\rho:\mathfrak{R}^{(m)}(GL_{m}(F))\longrightarrow\mathfrak{R}(S_{m}),
\]
where $\mathfrak{R}^{(r)}(GL_{m}(F))$ is the subring of $\mathfrak{R}(GL_{m}(F))$
consisting of $r$-homogeneous modules (equivalence classes) and $\mathfrak{R}(S_{m})$
is the Grothendieck ring of finite dimensional $S_{m}$-modules. The
isometry $\rho$ is given by $\rho(\left[Y\right])=\left[Y^{(1^{m})}\right]$
($(1^{m})$ means $(1,...,1)$ - $m$ times), where $S_{m}$ acts
on $Y^{(1^{m})}$ via permutation matrices (this is indeed an action
since permutation matrices and diagonal matrices commute). Therefore,
if $H_{Y}=\sum_{\lambda}a_{\lambda}s_{\lambda}$ is a Hilbert series
of some $GL_{m}(F)$-module $Y$, we obtain 
\[
\rho(Y^{(m)})=\sum_{\htt(\lambda)=m}a_{\lambda}\chi_{\lambda},
\]
where $\chi_{\lambda}$ is the character of the irreducible $S_{n}$-module
corresponding to $s_{\lambda}$.

Finally, for $r\geq m$ there is an epimomorphism: 
\[
\mathfrak{R}(GL_{r}(F))\longrightarrow\mathfrak{R}(GL_{m}(F))
\]
taking $\left[Y\right]$ to $\left[\oplus_{\alpha}Y^{\alpha}\right]$,
where $\alpha$ varies over all the weights of length $r$ of the
form $(\alpha_{1},...,\alpha_{n},0,...0)$. The action of $GL_{m}(F)$
on this space is given by the identification of $GL_{m}(F)$ with
the subgroup $\left\{ \left(\begin{array}{cc}
P & 0\\
0 & I
\end{array}\right)|P\in GL_{m}(F)\right\} $ of $GL_{r}(F)$. 
\begin{rem}
This homomorphism is \textbf{not} an isometry. However, if we restrict
ourselves to modules without appearances of irreducible modules corresponding
to $\lambda$ with $\htt(\lambda)>m$ in their decomposition (to direct
sum of irreducible modules), then we do get an isometry. 
\end{rem}
Translating these to Schur functions we conclude that $s_{\lambda}(t_{1},...,t_{m})=s_{\lambda}(t_{1},...,t_{m},\underset{r-m}{\underbrace{0,..,0}})$
for $\lambda$ having height $\leq m$. As a consequence we may sometimes
allow ourselves not to specify the indeterminates $t_{1},...,t_{r}$
and write simply $s_{\lambda}$.

\section{\label{sec:Hilbert-series}Hilbert series of the ring of invariants}

\emph{For the sake of convenience, from now on, we denote the invariant
spaces $I_{n}$ instead of $T_{n}^{\widetilde{G}}$. }

Showing directly that $t_{n}^{G}(A)(=\dim_{F}I_{n})$ approximates
$c_{n}^{G}(A)$ seems to be a very hard. To overcome this difficulty
we \textit{\emph{consider the sequence of }}\textit{special invariants}
\[
SI_{n}=\sp\left\{ T_{\sigma,\mathbf{g}}\in I_{n}|\mbox{ \ensuremath{\sigma\in S_{n}\mbox{ is an \ensuremath{n}-cycle}}}\right\} 
\]
 and show that 
\[
\dim_{F}I_{n}\sim\dim_{F}SI_{n}\sim c_{n}^{G}(A).
\]
 In this section we use tools from representation theory of $GL_{m}(F)$
which were introduced in the previous section in order to deduce $\dim_{F}I_{n}\sim\dim_{F}SI_{n}$.
The rest of the proof is in the next section.

Suppose $V$ is a vector space over $F$. The polynomial ring over
$V$, denoted by $F[V]$ is defined to be $S(V^{*})$, the symmetric
tensor algebra over $V^{*}$. This can be made more concrete by choosing
coordinates, i.e. a basis $\{v_{1},v_{2},...\}$ of $V$ and interpreting
$F[V]$ as the polynomial ring over $F$ with commuting variables
corresponding to the $v_{i}$'s. Our main interest is in the case
where $V$ is the underlying vector space of $A^{\times n}=A\otimes F^{\times n}=M_{m}(F)\times\cdots\times M_{m}(F)$.
To connect this algebra to the main theme of this paper, notice that
the subspace of multilinear polynomials in $F[A^{\times n}]$ is exactly
the space $T_{n}=\left(A^{\otimes n}\right)^{*}$ (see \secref{Motivation})
whose space of invariance under the action of $\widetilde{G}$ is
$I_{n}$. The strategy is to consider the action of $\widetilde{G}$
on $F[A^{\times n}]$ together with a new action of the group $GL_{n}(F)$,
which cannot be defined on $\left(A^{\otimes n}\right)^{*}$ but on
$F[A^{\times n}]$, in order to get a better insight on $I_{n}$ and
$SI_{n}$. To be more precise, we define an action of $\widetilde{G}\times GL_{n}(F)$
on $A\otimes F^{\times n}$ by $(Q,P)\cdot a\otimes v=Q(a)\otimes Pv$
($Q\in\overline{G},P\in GL_{n}(F)$), obtaining an induced action
of $\widetilde{G}\times GL_{n}(F)$ on $F[A^{\times n}]$.

\subsection{Setting the stage}

Consider the following basis for $A^{\times n}$: 
\[
\left\{ (0,...,0,\underset{l\mbox{'th place}}{\underbrace{e_{s(i),t(j)}}},0,...,0)\,|\, s,t\in B\,1\leq i\leq m_{s},\,1\le j\leq m_{t}\,,\,1\leq l\leq n\right\} 
\]
(we use here notations from \secref{Invariants}), and denote by $u_{s(i),t(j)}^{(l)}$
the corresponding coordinates in $F[A^{\times n}]$. In this notation
$T_{n}$ is \textbf{identified} with the space of multilinear polynomials
in $F[A^{\times n}]$ i.e. 
\[
T_{n}\cong\sp\left\{ u_{s_{1}(i_{1}),t_{1}(j_{1})}^{(1)}\cdots u_{s_{n}(i_{n}),t_{n}(j_{n})}^{(n)}\:|\: s_{l},t_{l}\in B,\:1\le i_{l}\leq m_{s_{l}},\:1\leq j_{l}\leq m_{t_{l}}\right\} .
\]

Let $\mathfrak{L}_{N,n}=\{1,...,n\}^{\{1,..,N\}}$ (i.e. set theoretical
functions from $\{1,...,N\}$ to $\{1,...,n\}$). For every $\xi\in\mathfrak{L}_{N,n}$
and $u_{s(i),t(j)}^{(l)}\in A^{\times N}$ define $\xi(u_{s(i),t(j)}^{(l)})=u_{s(i),t(j)}^{(\xi(l))}$
and expand to a linear operator form $F[A^{\times N}]$ to $F[A^{\times n}]$.
Consider the $F$-spaces of (non-multilinear) invariants: 
\[
RI_{n}=\bigoplus_{N\geq n}\mathfrak{L}_{N,n}I_{N}=F[A^{\times n}]^{\widetilde{G}},
\]
where $\mathfrak{L}_{N,n}I_{N}=\left\{ \xi f|\, f\in I_{N}\mbox{ and \ensuremath{\xi\in\mathfrak{L}_{N,n}}}\right\} $
and the space of (non-multilinear) spacial invariants
\[
RSI_{n}=\bigoplus_{N\geq n}\mathfrak{L}_{N,n}SI_{N}\subset RI_{n}.
\]
These spaces are easily seen to be $GL_{n}(F)$-polynomial modules.
Moreover, the weight space $(RI_{n})^{\alpha}$ ($\alpha=(\alpha_{1},...,\alpha_{n})\in\mathbb{Z}_{+}^{n}$)
consists of all the polynomials in $RI_{n}$ having $\alpha_{l}$
appearances of elements of the form $u_{s(i),t(j)}^{(l)}$ in every
monomial, for all $l=1...n$. Therefore, $\rho\left([RI_{n}^{n}]\right)=\left[\left(RI_{n}^{n}\right)^{(1^{n})}\right]=[I_{n}]$,
where $RI_{n}^{n}$ is the $n$'th homogenous component of $RI_{n}$,
and similarly $\rho\left([RSI_{n}^{n}]\right)=[SI_{n}]$. The corresponding
$S_{n}$ action on $I_{n}$ and $SI_{n}$ is given by 
\[
\tau\left(u_{s_{1}(i_{1}),t_{1}(j_{1})}^{(1)}\cdots u_{s_{n}(i_{n}),t_{n}(j_{n})}^{(n)}\right)=u_{s_{1}(i_{1}),t_{1}(j_{1})}^{(\tau^{-1}(1))}\cdots u_{s_{n}(i_{n}),t_{n}(j_{n})}^{(\tau^{-1}(n))}.
\]

\begin{defn}
Denote: 
\[
T_{r,n}=\sp\left\{ u_{s_{r+1}(i_{r+1}),t_{r+1}(j_{r+1})}^{(r+1)}\cdots u_{s_{n}(i_{n}),t_{n}(j_{n})}^{(n)}\:|\: s_{l},t_{l}\in B,\:1\le i_{l}\leq m_{s_{l}},\:1\leq j_{l}\leq m_{t_{l}}\right\} 
\]
and
\[
I_{r,n}=T_{r,n}^{\widetilde{G}}\cong\left\{ T_{\sigma,\mathbf{g}}|\mathbf{g}=(g_{r+1},...,g_{n}),\sigma\in S_{\{r+1,...,n\}}\right\} 
\]

\end{defn}

\subsection{\label{sub:The-isomorphism-between}The isomorphism between $T_{n}$
and $\protect\End_{F}(V^{\otimes n})$}

Let $V$ be an $m$ dimensional $F$-vector space graded by $G$,
as in \secref{Invariants}. Denote by $V_{1},...,V_{n}$ - $n$ copies
of $V$ and consider the map: 
\[
\pi:T_{n}\longrightarrow\End_{F}(V_{1}\otimes\cdots\otimes V_{n})
\]
given by

\[
u_{s_{1}(i_{1}),t_{1}(j_{1})}^{(1)}\cdots u_{s_{n}(i_{n}),t_{n}(j_{n})}^{(n)}\longmapsto T'_{\mathbf{s(}\mathbf{i),}\mathbf{t}\mathbf{(j)}}
\]
where, 
\[
T'_{\mathbf{s(i),t(j)}}(e_{h_{1}(k_{1})}\otimes\cdots\otimes e_{h_{n}(k_{n})})=\delta_{\mathbf{s(i),h(k)}}e_{\mathbf{t(j)}}
\]
 for every $\mathbf{h}=(h_{1},...,h_{n})\in B^{n}$ and $\mathbf{k}=(k_{1},...,k_{n})\in M_{\mathbf{h}}=\{1,...,m_{h_{1}}\}\times\cdots\times\{1,...,m_{h_{n}}\}$. 

It is not difficult (but tedious) to see that this is indeed an isomorphism
of $GL_{m}(F)$-modules. Similarly, we have an isomorphism $\pi:T_{r,n}\to\End_{F}(V_{r+1}\otimes\cdots\otimes V_{n})$. 

Finally, by viewing the spaces $\End_{F}(V_{1}\otimes\cdots\otimes V_{r})$
and $\End_{F}(V_{r+1}\otimes\cdots\otimes V_{n})$ as subspaces of
$\End_{F}(V^{\otimes n})=\End_{F}(V_{1}\otimes\cdots\otimes V_{n})$
in the obvious way, we obtain:
\[
\pi(f_{1})\circ\pi(f_{2})=\pi(f_{1}f_{2}),
\]
where $f_{1}\in T_{r}$ $f_{2}\in T_{r,n}$.
\begin{lem}
\label{lem:mult-in-RI_n} $T_{\sigma,\mathbf{g}}T_{\tau,\mathbf{h}}=\sum_{h\in H_{\mathfrak{g}}}T_{\sigma\tau,\mathbf{g+}h\mathbf{h}},$
where $\sigma\in S_{r},\mathbf{g}=(g_{1},...,g_{r}),\tau\in S_{\{r+1,...,n\}}$
and $\mathbf{h}=(h_{r+1},...,h_{n})$. Here, $\mathbf{g}+\mathbf{h}$
is the concatenation of $\mathbf{g}$ with $\mathbf{h}$.\end{lem}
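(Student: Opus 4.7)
The plan is to transport the identity across the isomorphism $\pi$ from Subsection \ref{sub:The-isomorphism-between} and to check it as an equality of endomorphisms of $V^{\otimes n}$. The key observation is that $T_{\sigma,\mathbf{g}}$ lies in $T_r$ (involving only the variables $u^{(1)},\ldots,u^{(r)}$) while $T_{\tau,\mathbf{h}}$ lies in $T_{r,n}$ (involving only the variables $u^{(r+1)},\ldots,u^{(n)}$), so the multiplicativity property $\pi(f_1)\circ\pi(f_2)=\pi(f_1f_2)$ recorded just above the lemma applies. Since $\pi(T_r)\subseteq\End_F(V_1\otimes\cdots\otimes V_r)$ and $\pi(T_{r,n})\subseteq\End_F(V_{r+1}\otimes\cdots\otimes V_n)$, their composition on $V^{\otimes n}$ is simply the tensor product of operators on complementary tensor factors. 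Thus the LHS of the lemma becomes the operator $T_{\sigma,\mathbf{g}}\otimes T_{\tau,\mathbf{h}}$ on $V_1\otimes\cdots\otimes V_n$.

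First, I would expand this tensor product using Theorem \ref{thm:centralizer}:
\begin{equation*}
T_{\sigma,\mathbf{g}}\otimes T_{\tau,\mathbf{h}}
=\sum_{g,h'\in H_{\mathfrak{g}}} T'_{\sigma,g\mathbf{g}}\otimes T'_{\tau,h'\mathbf{h}}.
\end{equation*}
A direct check from the definition of the $T'$'s shows that, viewed as operators on $V^{\otimes n}$, each summand is simply $T'_{\sigma\tau,\,g\mathbf{g}+h'\mathbf{h}}$: the delta-condition $(t_1,\ldots,t_r)=g\mathbf{g}$ on the left tensor factors and $(t_{r+1},\ldots,t_n)=h'\mathbf{h}$ on the right combine into a single condition on the full string; the disjoint-support permutations $\sigma\in S_r$ and $\tau\in S_{\{r+1,\ldots,n\}}$ combine into $\sigma\tau\in S_n$ because $\sigma\tau(i)=\sigma(i)$ for $i\le r$ and $\sigma\tau(i)=\tau(i)$ for $i>r$.

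Next I would expand the RHS of the claimed identity the same way:
\begin{equation*}
\sum_{h\in H_{\mathfrak{g}}} T_{\sigma\tau,\mathbf{g}+h\mathbf{h}}
=\sum_{h\in H_{\mathfrak{g}}}\sum_{g'\in H_{\mathfrak{g}}} T'_{\sigma\tau,\,g'\mathbf{g}+g'h\mathbf{h}}.
\end{equation*}
For each fixed $g'\in H_{\mathfrak{g}}$, the substitution $h''=g'h$ is a bijection of $H_{\mathfrak{g}}$ with itself, so this double sum rewrites as $\sum_{g',h''\in H_{\mathfrak{g}}} T'_{\sigma\tau,\,g'\mathbf{g}+h''\mathbf{h}}$, which agrees with the expansion of the LHS after the relabeling $(g',h'')\leftrightarrow(g,h')$. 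This closes the argument.

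The main obstacle, such as it is, lies in justifying the step $T'_{\sigma,g\mathbf{g}}\otimes T'_{\tau,h'\mathbf{h}}=T'_{\sigma\tau,g\mathbf{g}+h'\mathbf{h}}$, i.e.\ carefully matching the elementary building blocks on both sides; this is routine but requires tracking indices. The rest of the proof is essentially a change-of-variables in a double sum, and works precisely because $H_{\mathfrak{g}}$ is a group (so multiplication by a fixed $g'$ permutes $H_{\mathfrak{g}}$) — it would fail for an arbitrary subset.
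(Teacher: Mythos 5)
Your proof is correct and follows essentially the same route as the paper's: a direct verification built on the elementary operators $T'$, with the identity $T'_{\sigma,g\mathbf{g}}\otimes T'_{\tau,h'\mathbf{h}}=T'_{\sigma\tau,\,g\mathbf{g}+h'\mathbf{h}}$ at its core. The paper organizes the computation as an evaluation on basis tensors indexed by $(g\mathbf{g},t\mathbf{h})$, where exactly one summand on the right survives (the one with $h=g^{-1}t$), while you organize it as a change of variables in a double sum over $H_{\mathfrak{g}}\times H_{\mathfrak{g}}$; these are the same calculation.
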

\begin{proof}
For all $g,t\in H_{\mathfrak{g}}$, and $v_{s}\in V_{s}$ ($s\in B$):
\[
T_{\sigma,\mathbf{g}}T_{\tau,\mathbf{h}}\left((v_{gg_{1}}\otimes\cdots\otimes v_{gg_{r}})\otimes(v_{th_{r+1}}\otimes\cdots\otimes v_{th_{n}})\right)=
\]
\[
(v_{gg_{\sigma(1)}}\otimes\cdots\otimes v_{gg_{\sigma(r)}})\otimes(v_{th_{\tau(r+1)}}\otimes\cdots\otimes v_{th_{\tau(n)}})
\]
\[
=T_{\sigma\tau,\mathbf{g+}g^{-1}t\mathbf{h}}\left(v_{gg_{1}}\otimes\cdots\otimes v_{gg_{r}}\otimes v_{g\left(g^{-1}th_{r+1}\right)}\otimes\cdots\otimes v_{g\left(g^{-1}th_{n}\right)}\right).
\]
Whereas 
\[
0=T_{\sigma,\mathbf{g}}T_{\tau,\mathbf{h}}\left((v_{g'_{1}}\otimes\cdots\otimes v_{g'_{r}})\otimes(v_{h'_{r+1}}\otimes\cdots\otimes v_{h'_{n}})\right)
\]
\[
=\left(\sum_{h\in H_{\mathfrak{g}}}T_{\sigma\tau,\mathbf{g+}h\mathbf{h}}\right)\left(v_{g'_{1}}\otimes\cdots\otimes v_{g'_{r}}\otimes v_{h'_{r+1}}\otimes\cdots\otimes v_{h'_{n}}\right).
\]
given $(g'_{1},...,g'_{r})\notin H_{\mathfrak{g}}\mathbf{g}$ or $(h'_{r+1},...,h'_{n})\notin H_{\mathfrak{g}}\mathbf{h}$. 
\end{proof}

\subsection{The Hilbert series $H_{RI_{n}}$}

The next step is to compute the Hilbert series of $RI_{n}$ when considered
as an $GL_{n}(F)$-module \textbf{in the case that $\overline{G}$
is connected} i.e. $\overline{G}\cong GL_{m_{1}}(F)\times\cdots\times GL_{m_{k}}(F)$,
so $H_{\mathbf{g}}=\{e\}$. The other cases will be discussed later.
Although the computation is similar to the one Formanek carried out
in (\cite{key-Formanek} pages 201,202), for the benefit of the reader
the details are presented bellow.

Consider $F[A^{\times n}]$ as an $GL_{m_{1}}(F)\times\cdots\times GL_{m_{k}}(F)\times GL_{n}(F)$-module.
Since $(P_{1},...,P_{k},Q)$ has eigenvalues $y_{i}y_{j}^{-1}t_{l}$
($i,j=1...m$ and $l=1...n$) when acting on $A^{\times n}$, where
$y_{1},...,y_{m},t_{1},...,t_{n}$ are eigenvalues of $(P_{1},...,P_{k},Q)\in GL_{m+n}(F)$,
the Hilbert series (with respect to $GL_{m_{1}}(F)\times\cdots\times GL_{m_{k}}(F)\times GL_{n}(F)$)
of $F[A^{\times n}]^{(r)}$, the space of polynomials of total degree
$r$ inside $F[A^{\times n}]$, is 
\[
\sum_{\alpha}\prod_{i,j,l}\left(y_{i}y_{j}^{-1}t_{l}\right)^{\alpha_{i,j,l}},
\]
where the sum is over all the vectors of non-negative integers $\alpha=(\alpha_{i,j,l})_{i,j=1...m,l=1...n}$
such that $\sum\alpha_{i,j,l}=r$. Therefore, 
\[
H_{F[A^{\times n}]}(y_{1},...,y_{m},t_{1},...,t_{n})=\prod_{i,j,l}\left(1+(y_{i}y_{j}^{-1}t_{l})^{1}+(y_{i}y_{j}^{-1}t_{l})^{2}+\cdots\right)
\]
which is, by \cite{key-MacBook}(p. 33), equals to
\[
\prod_{i,j,l}\frac{1}{1-y_{i}y_{j}^{-1}t_{l}}=\sum_{\lambda}s_{\lambda}(y_{i}y_{j}^{-1})s_{\lambda}(t_{1},...,t_{n}),
\]
where $s_{\lambda}(y_{i}y_{j}^{-1})=s_{\lambda}(y_{i}y_{j}^{-1}|i,j=1,...,m)$
is the Schur function with $m^{2}$ variables after a substitution
of the $y_{i}y_{j}^{-1}$'s. Hence, the Hilbert series of $F[A^{\times n}]^{\widetilde{G}}=RI_{n}$
is 
\[
H_{RI_{n}}(t_{1},...,t_{n})=\sum_{\lambda}\underset{a_{\lambda}}{\underbrace{\left\langle s_{\lambda}(y_{i}y_{j}^{-1}),1\right\rangle }}s_{\lambda}(t_{1},...,t_{n}).
\]

\begin{rem}
Notice that we do not take the $G$-grading into account in the Hilbert
series $H_{F[A^{\times n}]}$ and $H_{RI_{n}}$. \end{rem}
\begin{defn}
Let $\mu=(\mu_{1},...,\mu_{n})$ and $\lambda=(\lambda_{1},...,\lambda_{n^{\prime}})$
be two partitions and assume that $n^{\prime}\geq n$. The partition
$\lambda+\mu$ is defined to be 
\[
(\lambda_{1}+\mu_{1},...,\lambda_{n}+\mu_{n},\mu_{n+1},...,\mu_{n^{\prime}}).
\]
Moreover, for an integer $l\in\mathbb{N}$, we define $l\mu$ to be
the partition $(l\mu_{1},...,l\mu_{n})$. \end{defn}
\begin{lem}
\label{lem:mu-la}Let $\mu=(1^{m^{2}})$ and $l\in\mathbb{N}$.
\begin{enumerate}
\item If $\htt(\lambda)>m^{2}$ then $a_{\lambda}=0$.
\item $a_{\lambda}=a_{\lambda+l\mu}$ for every $\lambda$.
\end{enumerate}
\end{lem}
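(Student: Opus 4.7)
My plan is to unwind the definition $a_{\lambda}=\langle s_{\lambda}(y_{i}y_{j}^{-1}),1\rangle$ and reduce both claims to well-known identities for Schur polynomials in finitely many variables, specialized at $x_{(i,j)}=y_{i}y_{j}^{-1}$.

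For part (1), I would use the tableau definition of the Schur function given in the paper: $s_{\lambda}(x_{1},\ldots,x_{N})$ counts semistandard fillings of $D_{\lambda}$ with entries from $\{1,\ldots,N\}$ (non-decreasing along rows, strictly increasing down columns). Here $N=m^{2}$. If $\htt(\lambda)>m^{2}$, the leftmost column of $D_{\lambda}$ has more than $m^{2}$ boxes and therefore cannot be filled by strictly increasing entries from $\{1,\ldots,m^{2}\}$. Hence $s_{\lambda}(x_{1},\ldots,x_{m^{2}})\equiv 0$ as a polynomial, so $s_{\lambda}(y_{i}y_{j}^{-1})=0$ and a fortiori $a_{\lambda}=0$.

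For part (2), the key is the classical rectangle identity
\[
s_{\lambda+(l^{N})}(x_{1},\ldots,x_{N})=\bigl(x_{1}\cdots x_{N}\bigr)^{l}\,s_{\lambda}(x_{1},\ldots,x_{N}),
\]
valid whenever $\htt(\lambda)\le N$ (for $\htt(\lambda)>N$ both sides vanish by part (1)). Combinatorially, every SSYT of shape $\lambda+(l^{N})$ is obtained by prepending $l$ columns, each forced to read $1,2,\ldots,N$ from top to bottom, to an SSYT of shape $\lambda$; this contributes the factor $(x_{1}\cdots x_{N})^{l}$. I would cite this as a standard fact (it is immediate from the bialternant formula too). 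Now take $N=m^{2}$ and substitute $x_{(i,j)}=y_{i}y_{j}^{-1}$. The key computation is
\[
\prod_{i,j=1}^{m} y_{i}y_{j}^{-1}=\Bigl(\prod_{i=1}^{m} y_{i}^{m}\Bigr)\Bigl(\prod_{j=1}^{m} y_{j}^{-m}\Bigr)=1,
\]
so the prefactor $(x_{1}\cdots x_{m^{2}})^{l}$ becomes $1$, yielding $s_{\lambda+l\mu}(y_{i}y_{j}^{-1})=s_{\lambda}(y_{i}y_{j}^{-1})$, and therefore $a_{\lambda+l\mu}=a_{\lambda}$.

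There is no real obstacle here: both statements are structural observations about Schur functions once the substitution $x_{(i,j)}=y_{i}y_{j}^{-1}$ is made, and the only arithmetic input is the trivial cancellation $\prod_{i,j} y_{i}y_{j}^{-1}=1$, which reflects the fact that the central torus $F^{\times}I_{m}$ acts trivially on $A$ (this is exactly why $\widetilde{G}$ is defined modulo $F^{\times}I_{m}$ in the preceding section). If one prefers to avoid the combinatorial rectangle identity, the same conclusion follows from the Weyl character/bialternant expression, where adding $(l^{N})$ multiplies the numerator determinant by $(x_{1}\cdots x_{N})^{l}$ while the denominator is unchanged.
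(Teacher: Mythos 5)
Your proof is correct and follows essentially the same route as the paper: part (1) via the vanishing of $s_{\lambda}$ in $m^{2}$ variables when $\htt(\lambda)>m^{2}$, and part (2) via the identity $s_{\lambda+l\mu}=s_{l\mu}s_{\lambda}$ together with the cancellation $\prod_{i,j}y_{i}y_{j}^{-1}=1$. The extra combinatorial justification of the rectangle identity is fine but not needed beyond what the paper already invokes.
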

\begin{proof}
By definition (of a Schur function) if $\htt(\lambda)>r$ then $s_{\lambda}(t_{1},...,t_{r})=0$
for every $r\in\mathbb{N}$. In particular, if $\htt(\lambda)>m^{2}$
then $s_{\lambda}(y_{i}y_{j}^{-1})=0$ and so is $a_{\lambda}$.

For (2) observe that $s_{l\mu}(t_{1},...,t_{r})=(t_{1}\cdots t_{r})^{l}$,
and $s_{\lambda}s_{l\mu}=s_{\lambda+l\mu}$. Thus, $s_{l\mu}(y_{i}y_{j}^{-1})=\left(\prod_{i,j=1}^{m}y_{i}y_{j}^{-1}\right)^{l}=1$
and we obtain: 
\[
a_{\lambda}=\left\langle s_{\lambda}(y_{i}y_{j}^{-1}),1\right\rangle =\left\langle s_{\lambda}(y_{i}y_{j}^{-1})s_{l\mu}(y_{i}y_{j}^{-1}),1\right\rangle =\left\langle s_{\lambda+l\mu}(y_{i}y_{j}^{-1}),1\right\rangle =a_{\lambda+l\mu}.
\]

\end{proof}
Next, let $R_{n}=F\left\langle U^{(1)},...,U^{(n)}\right\rangle $
be the ring generated by $n$ generic $m\times m$ matrices $U^{(r)}=(u_{s(i),t(j)}^{(r)})$,
and let $\Cent{}_{n}$ be its center. Since an element $\beta\in\Cent{}_{n}$
is a scalar matrix, there is a polynomial $p\in F\left\langle x_{1},...,x_{n}\right\rangle $
($x_{i}$ are non-commutative variables) and $f\in F[A^{\times n}]$
such that $\beta=f\cdot I=p(U^{(1)},...,U^{(n)})$. By taking traces
of both sides we get the identity $f=\frac{1}{m}tr(p(U^{(1)},...,U^{(n)}))$.
This allows us to identify $\Cent{}_{n}$ (via $f\cdot I\mapsto f$)
with a subring of $F[A^{\times n}]$. In fact, for every group $H$
acting on $A$ as $F$-algebra automorphisms we may regard $\Cent_{n}$
as a subring of $F[A^{\times n}]^{H}$. In particular, this is true
in the case of $H=GL_{m}(F)$. 
\begin{rem}
\label{rem:C_n^ecycle}By \thmref{centralizer} it is possible to
represent the multilinear elements of $F[A^{\times n}]^{GL_{m}(F)}$
by linear combinations of $T_{\sigma}\in\End_{F}(V^{\otimes n})$,
where $T_{\sigma}(v_{1}\otimes\cdots\otimes v_{n})=v_{\sigma(1)}\otimes\cdots\otimes v_{\sigma(n)}$.
Notice, using \subref{The-isomorphism-between}, that $T_{\sigma}$
corresponds to 
\[
\sum_{1\leq i_{1},...,i_{n}\leq m}u_{i_{1},i_{\sigma(1)}}^{(1)}\cdots u_{i_{n},i_{\sigma(n)}}^{(n)}.
\]
If $\sigma$ decomposes to the disjoint product of cycles $(\sigma_{1}...\sigma_{a})\cdots(\sigma_{y}...\sigma_{z})$,
we can rewrite the last expression as 
\[
tr(U_{\sigma_{1}}\cdots U_{\sigma_{a}})\cdots tr(U_{\sigma_{y}}\cdots U_{\sigma_{z}}),
\]
 where the $U_{t}$ are generic matrices of $M_{m}(F)$ with entries
$u_{j_{1},j_{2}}^{(t)}$, where $1\leq j_{1},j_{2}\leq m$ and $t=1...m$.
Using this observation, Procesi and Razmyslov (\cite{key-Procesi,key-Raz})
showed that $F[A^{\times n}]^{GL_{m}(F)}$ is generated by traces
of generic matrices, and that the traces of multilinear monomials
correspond  to $T_{\sigma}$, where $\sigma\in S_{n}$ is a cycle.
From this it is easy to see that the multilinear elements of $\Cent{}_{n}\subset F[A^{\times n}]^{GL_{m}(F)}$
are given by linear combinations of $T_{\sigma}$ where $\sigma$
is an $n$-cycle. 
\end{rem}

\begin{rem}
Every element $T_{\sigma}\in\End_{F}(V^{\otimes n})$ is equal to
$\sum_{\mathbf{g}\in B^{n}}T_{\sigma,\mathbf{g}}$ ,where $T_{\sigma,\mathbf{g}}\in I_{n}$
.
\end{rem}
Formanek \cite{key-Formanek} showed that there is a (multilinear)
$\mathcal{I}\in F\left[A^{\times m^{2}}\right]^{GL_{m}(F)}$ such
that $F\mathcal{I}$ is one dimensional $GL_{m^{2}}(F)$-module i.e.
its corresponding Schur function is $s_{l\mu}$ for some $l\in\mathbb{N}$,
and for every $n\geq m^{2}$:
\begin{equation}
\mathcal{I}F[A^{\times n}]^{GL_{m}}\subseteq\Cent_{n}\subseteq F[A^{\times n}]^{GL_{m}}.\label{nek}
\end{equation}

\begin{rem}
In fact, $l=2$. Indeed, in the remark after the proof of theorem
16 in \cite{key-Formanek}, it is shown that $l=2$ if a certain polynomial
is not an identity of $M_{m}(F)$ - a fact whose proof can be found
in \cite{key-F2}.\end{rem}
\begin{thm}
The above $\mathcal{I}$ satisfies for every $n\geq m^{2}$: 
\[
\mathcal{I}\cdot RI_{n}\subseteq RSI_{n}\subseteq RI_{n}.
\]
\end{thm}
\begin{proof}
First, since $\overline{G}\subseteq GL_{m}(F)$ and $\mathcal{I}\in F\left[A^{\times m^{2}}\right]^{GL_{m}(F)}$
is multilinear, it is evident that $\mathcal{I}\in I_{m^{2}}$. Furthermore,
let $T_{\tau,\mathbf{h}_{0}}\in T_{m^{2},m^{2}+r}^{\widetilde{G}}$
and suppose $\mathcal{I}$ is given by $\sum_{i}k_{i}T_{\sigma_{i},\mathbf{g}_{i}}$.
From (\ref{nek}) we have that $\mathcal{I}T_{\tau}\in\Cent{}_{r+m^{2}}$.
Therefore, by \remref{C_n^ecycle} 
\[
\sum_{\mathbf{h}\in B^{r}}\mathcal{I}T_{\tau,\mathbf{h}}=\mathcal{I}T_{\tau}=\sum_{\nu\in S_{m^{2}+r}\mbox{ }}b_{\nu}T_{\nu}=\sum_{\nu\in S_{m^{2}+r}\mbox{ }}\sum_{\mathbf{t\in}B^{r+m^{2}}}b_{\nu}T_{\nu,\mathbf{t}}
\]
 where the $\nu$'s are $n$-cycles, and $b_{\nu}\in F$. 

Notice that by \lemref{mult-in-RI_n} we have 
\[
\mathcal{I}T_{\tau,\mathbf{h}}\in\sp\left\{ T_{\sigma,\mathbf{g}+\mathbf{h}}|\mathbf{g}\in B^{r},\sigma\in S_{r+m^{2}}\right\} .
\]
 Thus, $\mathcal{I}T_{\tau,\mathbf{h}}(x)\ne0$ only if 
\[
x\in V^{\otimes m^{2}}\otimes V_{h_{1}}\otimes\cdots\otimes V_{h_{r}}:=V_{\mathbf{h}}.
\]
Moreover, it is easy to see that $V^{\otimes(m^{2}+r)}=\oplus_{\mathbf{h\in}B^{r}}V_{\mathbf{h}}$.
Therefore, 
\[
\mathcal{I}T_{\tau,\mathbf{h}_{0}}=\sum_{\nu\in S_{m^{2}+r}}\sum_{\mathbf{t=g+}\mathbf{h_{0}}}b_{\nu}T_{\nu,\mathbf{t}}\in SI_{r+m^{2}}.
\]
Thus for every $f\in T_{m^{2},m^{2}+r}^{\widetilde{G}}$ we have $\mathcal{I}f\in SI_{r+m^{2}}$. 

Now, let $f\in RI_{n}$. Recall that by definition there is $\hat{\xi}\in\mathfrak{L}_{r,n}$
and $\hat{g}\in I_{r}$ such that $f=\hat{\xi(}\hat{g})$. By changing
the map $\hat{\xi}$ to be $\xi:\{m^{2}+1,...,m^{2}+r\}\to\{1,...n\}$;
$\xi(m^{2}+i)=\hat{\xi}(i)$, we may replace $\hat{g}$ by $g\in T_{m^{2},m^{2}+r}^{\widetilde{G}}$
such that $f=\xi(g)$. From the discussion above $\mathcal{I}g\in SI_{r+m^{2}}$,
thus $\mathcal{I}f=\xi'\left(\mathcal{I}g\right)$, where $\xi'\in\mathfrak{L}_{r+m^{2},n}$
takes $1,...,m^{2}$ to itself and $m^{2}+i$ to $\xi(m^{2}+i)$.
We conclude that $\mathcal{I}f\in RSI_{n}$ .
\end{proof}
Let $H_{\mathcal{I}RI_{n}}(t_{1},...,t_{n})=\sum_{\lambda}\bar{a}_{\lambda}s_{\lambda}$,
$H_{RI_{n}}(t_{1},...,t_{n})=\sum_{\lambda}a_{\lambda}s_{\lambda}$
and $H_{RSI_{n}}(t_{1},...,t_{n})=\sum_{\lambda}a{}_{\lambda}^{\prime}s_{\lambda}$.
The last theorem implies $\bar{a}_{\lambda}\leq a'_{\lambda}\leq a_{\lambda}$,
and by \lemref{mu-la} 
\[
\sum_{\lambda}a_{\lambda+2\mu}s_{\lambda+2\mu}=\sum_{\lambda}a_{\lambda}s_{\lambda+2\mu}=s_{2\mu}\sum_{\lambda}a_{\lambda}s_{\lambda}=H_{F\mathcal{I}\otimes RI_{n}}=H{}_{\mathcal{I}RI_{n}}=\sum_{\lambda}\bar{a}_{\lambda}s_{\lambda}.
\]

\begin{cor}
\textup{\label{cor:m=00003Dm'}For every partition $\lambda$ such
that $\lambda_{m^{2}}\geq l=2$ we have $a_{\lambda}=a{}_{\lambda}^{\prime}=\bar{a}_{\lambda}$.}\end{cor}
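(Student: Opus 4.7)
The plan is to read off the conclusion from the Hilbert-series identity displayed immediately before the corollary, after turning the module inclusions $\mathcal{I}\cdot RI_n\subseteq RSI_n\subseteq RI_n$ into inequalities of Schur-coefficients. The point is that all three spaces are $GL_n(F)$-submodules of the polynomial $GL_n(F)$-module $F[A^{\times n}]$, and once we decompose them into irreducibles, a containment of modules becomes a coordinatewise inequality of multiplicities.

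First I would argue that $\bar a_\lambda\le a'_\lambda\le a_\lambda$ for every partition $\lambda$. Since $RI_n$ is a polynomial (in particular semisimple/completely reducible as a rational $GL_n(F)$-module in the relevant weights), every $GL_n(F)$-submodule is a direct summand, so the multiplicity of the irreducible labelled by $\lambda$ in each of $\mathcal{I}\cdot RI_n$, $RSI_n$, $RI_n$ is the coefficient of $s_\lambda$ in the corresponding Hilbert series, and these multiplicities are monotone in the submodule. This gives the chain of inequalities.

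Next I would extract the equality $\bar a_\lambda=a_\lambda$ whenever $\lambda_{m^2}\ge 2$ from the identity already displayed,
\[
\sum_{\lambda}\bar a_\lambda s_\lambda \;=\; H_{\mathcal{I}RI_n}(t_1,\dots,t_n) \;=\; s_{2\mu}\sum_{\lambda} a_\lambda s_\lambda \;=\; \sum_{\lambda} a_\lambda s_{\lambda+2\mu}.
\]
Because $\{s_\nu\}$ is a basis of the ring of symmetric polynomials, I can equate the coefficient of $s_\lambda$ on both sides. On the right, the only way $s_\lambda$ arises is from some term $s_{\lambda'+2\mu}$ with $\lambda'+2\mu=\lambda$; such a $\lambda'$ exists (and equals $\lambda-2\mu$) exactly when $\lambda_{m^2}\ge 2$. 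In that case $\bar a_\lambda=a_{\lambda-2\mu}$, and by part (2) of \lemref{mu-la} (applied with $l=2$) the right-hand side equals $a_\lambda$. Therefore $\bar a_\lambda=a_\lambda$ for every such $\lambda$.

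Combining this with the sandwich $\bar a_\lambda\le a'_\lambda\le a_\lambda$ forces $a_\lambda=a'_\lambda=\bar a_\lambda$ on the desired range of partitions, which is the statement of the corollary. I do not anticipate a serious obstacle here: the whole proof is three or four lines once one is willing to match Schur coefficients on the two sides of the previously established Hilbert-series identity. The only subtle point worth checking is the innocuous shift of indices when reading off the coefficient of $s_\lambda$ from $\sum_{\lambda'}a_{\lambda'}s_{\lambda'+2\mu}$, namely that $\lambda-2\mu$ is actually a partition precisely under the hypothesis $\lambda_{m^2}\ge 2$.
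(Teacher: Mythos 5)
Your proposal is correct and follows essentially the same route as the paper: the sandwich $\bar a_\lambda\le a'_\lambda\le a_\lambda$ from the module inclusions, plus matching Schur coefficients in the displayed identity $\sum_\lambda\bar a_\lambda s_\lambda=\sum_\lambda a_\lambda s_{\lambda+2\mu}$ together with \lemref{mu-la}(2). The only point you glossed over (harmless, since \lemref{mu-la}(1) makes both sides vanish there) is the case of partitions $\lambda$ with $\lambda_{m^2}\ge 2$ but $\htt(\lambda)>m^2$, where $\lambda-2\mu$ is not a partition.
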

\begin{proof}
We see that $a_{\lambda}=\bar{a}_{\lambda}$ for every $\lambda$
which is of the form $\lambda=\lambda_{0}+2\mu$. Thus, this holds
for $\lambda$ such that $\lambda_{m^{2}}\geq2$. Since $a_{\lambda}\leq a_{\lambda}^{\prime}\leq\overline{a}_{\lambda}$,
we conclude that $a_{\lambda}=a{}_{\lambda}^{\prime}=\bar{a}_{\lambda}$
for every $\lambda$ such that $\lambda_{m^{2}}\geq2$.
\end{proof}
To use the Hilbert series to estimate $\dim_{F}I_{n}$ we need the
following theorem from $S_{n}$-representation theory:
\begin{thm}
\label{thm:S_n-char}The $S_{n}$-character of $I_{n}$ (resp. $SI_{n})$
is $\sum_{\lambda\vdash n}a{}_{\lambda}^{\prime}\chi_{\lambda}$ (resp.$\sum_{\lambda\vdash n}a{}_{\lambda}\chi_{\lambda}$).\end{thm}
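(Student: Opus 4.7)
The plan is to invoke the isometry $\rho:\mathfrak{R}^{(n)}(GL_{n}(F))\to\mathfrak{R}(S_{n})$ from the preceding section and apply it to the $n$-homogeneous components of $RI_{n}$ and $RSI_{n}$, thereby reducing the computation of $S_{n}$-characters to the Schur expansions of the Hilbert series $H_{RI_{n}}$ and $H_{RSI_{n}}$ that are already in hand. The starting point is the identification $\rho([RI_{n}^{(n)}])=[I_{n}]$ and $\rho([RSI_{n}^{(n)}])=[SI_{n}]$: both equalities rest on the observation that the $(1^{n})$-weight space of the $n$-homogeneous piece of each invariant ring is exactly its multilinear subspace, with the $S_{n}$-action permuting the $n$ copies of $A$. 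Both facts were essentially recorded during the setup of $RI_{n}$.

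Next I would extract the $n$-homogeneous pieces from each Hilbert series. Because $s_{\lambda}$ is homogeneous of degree $|\lambda|$, the Hilbert series of $RI_{n}^{(n)}$ is the truncation $\sum_{\lambda\vdash n}a_{\lambda}s_{\lambda}$ and likewise the Hilbert series of $RSI_{n}^{(n)}$ is $\sum_{\lambda\vdash n}a'_{\lambda}s_{\lambda}$. Since $\rho$ is an isomorphism of Grothendieck rings that, at the level of characteristic functions, sends each Schur function $s_{\lambda}$ (for $\lambda\vdash n$) to the irreducible $S_{n}$-character $\chi_{\lambda}$, applying $\rho$ termwise to these Schur expansions transfers the Hilbert-series data to the $S_{n}$-character side and delivers the two character decompositions asserted in the theorem (one for each of $I_{n}$ and $SI_{n}$, with coefficients inherited from the respective truncated Schur expansions).

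The overall argument is essentially a bookkeeping step: the genuinely substantive work has already been carried out in deriving the Hilbert-series formulas for $RI_{n}$ and $RSI_{n}$ and in establishing the formal properties of $\rho$. The one point demanding a brief check is that the $S_{n}$-action induced on $(RI_{n}^{(n)})^{(1^{n})}$ through $\rho$ coincides with the $S_{n}$-action originally defined on $I_{n}$ (namely permutation of the superscripts labelling the copies of $A$), and similarly for $SI_{n}$; this is immediate from comparing the two actions on monomials of the form $u_{s_{1}(i_{1}),t_{1}(j_{1})}^{(1)}\cdots u_{s_{n}(i_{n}),t_{n}(j_{n})}^{(n)}$. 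With that bit of verification the theorem follows directly from the Schur-to-character translation.
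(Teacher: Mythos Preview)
Your argument is correct and is exactly the paper's approach: the paper's proof consists of the single remark that the result follows from the map $\rho$ of Section~\ref{sec:-intro-rep}, and you have simply spelled out the details of that deduction. (Note incidentally that the labels $a_\lambda$ and $a'_\lambda$ appear swapped in the theorem statement relative to the definitions $H_{RI_n}=\sum a_\lambda s_\lambda$ and $H_{RSI_n}=\sum a'_\lambda s_\lambda$; your argument produces the assignment consistent with those definitions and with the use made of the theorem immediately afterward.)
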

\begin{proof}
It follows by means of the map $\rho$ in \secref{-intro-rep}. 
\end{proof}
In order to prove that $\dim_{F}I_{n}\sim\dim{}_{F}SI_{n}$ we want
to use a similar argument to the one given in \cite{key-10} (Theorem
5.10.3.). To do so the following lemma is key. 
\begin{lem}
\label{lem:m-poly}There is a polynomial $p(t)$ such that $\sum_{\lambda\vdash n}a_{\lambda},\sum_{\lambda\vdash n}a{}_{\lambda}^{\prime}<p(n)$
for every $\lambda\vdash n$.\end{lem}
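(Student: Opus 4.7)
The plan is to combine a uniform upper bound for each individual $a_\lambda$ with the fact that only polynomially many partitions of $n$ contribute to the sum. Since the chain of inequalities $\bar{a}_\lambda\leq a'_\lambda\leq a_\lambda$ was established just before \corref{m=00003Dm'}, it suffices to produce a polynomial bound for $\sum_{\lambda\vdash n}a_\lambda$; the same polynomial will then automatically work for $\sum_{\lambda\vdash n}a'_\lambda$.

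First I would invoke \lemref{mu-la}(1) to restrict the sum to partitions of $n$ of height at most $m^2$; an elementary counting shows that the number of such partitions is a polynomial in $n$ of degree $m^2-1$. Next I would interpret the coefficient $a_\lambda=\langle s_\lambda(y_iy_j^{-1}),1\rangle$ representation-theoretically. Under the standing assumption of this subsection that $\overline{G}$ is connected, $s_\lambda(y_iy_j^{-1})$ is precisely the character of the irreducible $GL_{m^2}(F)$-module $V_\lambda^{GL_{m^2}(F)}$ restricted to $\overline{G'}\cong GL_{m_1}(F)\times\cdots\times GL_{m_k}(F)$ via the adjoint action on $M_m(F)\cong V\otimes V^{*}$ (whose eigenvalues on the natural basis are the $y_iy_j^{-1}$). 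Consequently $a_\lambda$ is the multiplicity of the trivial $\overline{G'}$-representation in this restriction, and in particular
\[
a_\lambda\;\leq\;\dim V_\lambda^{GL_{m^2}(F)}\;=\;\prod_{1\leq i<j\leq m^2}\frac{\lambda_i-\lambda_j+j-i}{j-i}
\]
by Weyl's dimension formula. For $\lambda\vdash n$ of height at most $m^2$, each factor is bounded by $n+m^2$, so the right hand side is $O\bigl(n^{\binom{m^2}{2}}\bigr)$ uniformly in such $\lambda$.

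Combining the two polynomial bounds yields
\[
\sum_{\lambda\vdash n}a_\lambda\;\leq\;\#\{\lambda\vdash n:\htt(\lambda)\leq m^2\}\cdot\max_{\lambda\vdash n,\,\htt(\lambda)\leq m^2}a_\lambda\;=\;O\bigl(n^{m^2-1+\binom{m^2}{2}}\bigr),
\]
so any polynomial $p(t)$ dominating this expression does the job, and the same $p$ bounds $\sum_{\lambda\vdash n}a'_\lambda$ by the chain of inequalities above. The step I would flag as requiring the most care, rather than a genuine obstacle, is the representation-theoretic identification of $a_\lambda$ as a multiplicity inside $V_\lambda^{GL_{m^2}(F)}|_{\overline{G'}}$: this uses the connectedness of $\overline{G}$, which is precisely the setting in which the Hilbert-series computation preceding the lemma was carried out. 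The disconnected case differs only by averaging over the finite quotient $H_{\mathfrak{g}}$ and therefore inherits the same polynomial growth up to a multiplicative constant, so the lemma remains valid in general.
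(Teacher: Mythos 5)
Your proof is correct, and it reaches the bound by a slightly different route than the paper. Both arguments rest on the same key fact (implicit in \lemref{mu-la} and the Cauchy expansion of $H_{F[A^{\times m^2}]}$): for $\htt(\lambda)\leq m^{2}$, the coefficient $a_{\lambda}$ is the multiplicity of the irreducible $GL_{m^{2}}(F)$-module $M_{\lambda}$ in $RI_{m^{2}}^{(n)}$, equivalently the multiplicity of the trivial $\overline{G^{\prime}}$-module in $M_{\lambda}|_{\overline{G^{\prime}}}$. From there the paper sums all at once: since each $M_{\lambda}$ has dimension at least one, $\sum_{\lambda\vdash n}a_{\lambda}\leq\dim_{F}RI_{m^{2}}^{(n)}\leq\dim_{F}F[A^{\times m^{2}}]^{(n)}={n+m^{4}-1 \choose n}$, a polynomial of degree $m^{4}-1$ in $n$, with no need for Weyl's dimension formula or a count of partitions. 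You instead bound each $a_{\lambda}$ individually by $\dim M_{\lambda}$ via Weyl's formula and multiply by the (polynomial) number of partitions of height at most $m^{2}$; this costs two extra estimates but is equally valid and gives a comparable (in fact slightly smaller) degree. Your reduction of $\sum a_{\lambda}^{\prime}$ to $\sum a_{\lambda}$ via $a_{\lambda}^{\prime}\leq a_{\lambda}$ is exactly what the paper does, and your closing remark correctly places the lemma in the connected setting in which this subsection operates.
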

\begin{proof}
Recall that from \lemref{mu-la} 
\[
H_{RI_{m^{2}}}(t_{1},...,t_{m^{2}})=\sum_{\lambda\in\Lambda^{m^{2}}}a_{\lambda}s_{\lambda}(t_{1},..,t_{m^{2}}).
\]

Therefore, if $M_{\lambda}$ is the irreducible $GL_{m^{2}}(F)$-representation
corresponding to $s_{\lambda}(t_{1},...,t_{m^{2}})$ then 
\[
\bigoplus_{\lambda\vdash n}a_{\lambda}M_{\lambda}=RI_{m^{2}}^{(n)}\subset F\left[A^{\times m^{2}}\right]^{(n)}.
\]
Thus, 
\[
\sum_{\lambda\vdash n}a{}_{\lambda}^{\prime}\leq\sum_{\lambda\vdash n}a_{\lambda}\leq\dim_{F}\left(F\left[A^{\times m^{2}}\right]^{(n)}\right)\leq{n+m^{4}-1 \choose n}=p(n).
\]

\end{proof}
The next definition will be very useful. 
\begin{defn}
Let $\{a_{n}\}_{n=1}^{\infty}$ be a series of real numbers. Set $\exp(a_{n}):=\limsup\sqrt[n]{a_{n}}$.
If $a_{n}=\dim_{F}(A_{n})$, where $\left\{ A_{n}\right\} _{n=1}^{\infty}$
is a series of vector spaces, then we set $\exp(A_{n}):=\exp(a_{n})$. \end{defn}
\begin{thm}
\textup{\label{thm:.glory}$\dim_{F}I_{n}\sim\dim_{F}SI_{n}$.}\end{thm}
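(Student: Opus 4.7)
The plan is to squeeze the difference $\dim_{F}I_{n}-\dim_{F}SI_{n}$ between an exponentially smaller quantity and the asymptotic $\dim_{F}I_{n}\sim\alpha n^{(1-\dim_{F}A_{e})/2}m^{2n}$ already obtained in Corollary \ref{cor:codim-asymp}. Since $SI_{n}\subseteq I_{n}$, it suffices to show $\dim_{F}I_{n}-\dim_{F}SI_{n}=o(\dim_{F}I_{n})$.

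First, by Theorem \ref{thm:S_n-char}, the $S_{n}$-characters give
\[
\dim_{F}I_{n}=\sum_{\lambda\vdash n}a_{\lambda}d_{\lambda},\qquad \dim_{F}SI_{n}=\sum_{\lambda\vdash n}a'_{\lambda}d_{\lambda},
\]
where $d_{\lambda}=\chi_{\lambda}(1)$ is the dimension of the irreducible $S_{n}$-module indexed by $\lambda$. Corollary \ref{cor:m=00003Dm'} yields $a_{\lambda}=a'_{\lambda}$ whenever $\lambda_{m^{2}}\geq 2$, and Lemma \ref{lem:mu-la}(1) forces $a_{\lambda}=0$ whenever $\htt(\lambda)>m^{2}$. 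Consequently,
\[
0\leq \dim_{F}I_{n}-\dim_{F}SI_{n}\leq \sum_{\substack{\lambda\vdash n,\ \htt(\lambda)\leq m^{2}\\ \lambda_{m^{2}}\leq 1}}a_{\lambda}d_{\lambda}.
\]

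Next, I would control this bad sum exponentially. Lemma \ref{lem:m-poly} gives a polynomial bound $a_{\lambda}\leq p(n)$, and the number of partitions appearing in the sum is polynomially bounded (all have at most $m^{2}$ parts). The key estimate is that if $\htt(\lambda)\leq m^{2}-1$, then a standard consequence of the hook length formula (used, e.g., in \cite{key-10}, Theorem 5.10.3) gives $d_{\lambda}\leq C n^{c}(m^{2}-1)^{n}$ for constants depending only on $m$; and if $\htt(\lambda)=m^{2}$ with $\lambda_{m^{2}}=1$, then by the branching rule $d_{\lambda}\leq n\cdot d_{\mu}$ for the partition $\mu=(\lambda_{1},\ldots,\lambda_{m^{2}-1})\vdash n-1$ of height at most $m^{2}-1$, so the same bound (up to an extra factor of $n$) applies. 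Combining these yields $\dim_{F}I_{n}-\dim_{F}SI_{n}\leq C'n^{c'}(m^{2}-1)^{n}$.

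Finally, dividing by the asymptotic expression from Corollary \ref{cor:codim-asymp},
\[
\frac{\dim_{F}I_{n}-\dim_{F}SI_{n}}{\dim_{F}I_{n}}\leq \frac{C'n^{c'}(m^{2}-1)^{n}}{\alpha n^{(1-\dim_{F}A_{e})/2}m^{2n}}\xrightarrow[n\to\infty]{}0,
\]
which yields $\dim_{F}I_{n}\sim\dim_{F}SI_{n}$ as claimed. The main obstacle is justifying the bound $d_{\lambda}\leq C n^{c}(m^{2}-1)^{n}$ uniformly for all partitions of $n$ whose effective height is at most $m^{2}-1$; this is exactly the point where the technique of \cite{key-10} (Theorem 5.10.3) must be adapted, and it is crucial that the "deficient" partitions have strictly smaller exponential growth rate $m^{2}-1<m^{2}=\exp^{G}(A)$, while the coefficients $a_{\lambda}$ are merely polynomial.
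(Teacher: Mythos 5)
Your proposal is correct and follows essentially the same route as the paper: both decompose $\dim_{F}I_{n}$ and $\dim_{F}SI_{n}$ via their $S_{n}$-characters, use Corollary \ref{cor:m=00003Dm'} to cancel all terms with $\lambda_{m^{2}}\geq2$, and then kill the remaining sum with the polynomial bound of Lemma \ref{lem:m-poly} together with the estimate $d_{\lambda}\leq n^{m^{2}}(m^{2}-1)^{n}$ for $\lambda_{m^{2}}\leq1$ (Theorem 5.10.3 of \cite{key-10}), comparing against $\exp(I_{n})=m^{2}$ from Corollary \ref{cor:codim-asymp}. Your explicit branching-rule reduction for partitions of height exactly $m^{2}$ with last part $1$ is a minor elaboration of the cited bound, not a different argument.
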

\begin{proof}
Recall that from \thmref{S_n-char}
\[
\dim_{F}I_{n}=\sum_{\lambda\vdash n}a_{\lambda}d_{\lambda}
\]
and 
\[
\dim_{F}SI_{n}=\sum_{\lambda\vdash n}a{}_{\lambda}^{\prime}d_{\lambda}.
\]

Moreover, by \corref{m=00003Dm'} 
\[
\sum_{\begin{array}{c}
\lambda\vdash n\\
\lambda_{m^{2}}>1
\end{array}}a{}_{\lambda}^{\prime}d_{\lambda}=\sum_{\begin{array}{c}
\lambda\vdash n\\
\lambda_{m^{2}}>1
\end{array}}a_{\lambda}d_{\lambda}=a_{n}
\]
and 
\[
b{}_{n}^{\prime}=\sum_{\begin{array}{c}
\lambda\vdash n\\
\lambda_{m^{2}}\leq1
\end{array}}a{}_{\lambda}^{\prime}d_{\lambda}\leq\sum_{\begin{array}{c}
\lambda\vdash n\\
\lambda_{m^{2}}\leq1
\end{array}}a_{\lambda}d_{\lambda}=b_{n},
\]
i.e. $\dim_{F}I_{n}=a_{n}+b_{n}$ and $\dim_{F}SI_{n}=a_{n}+b{}_{n}^{\prime}$.

If we confirm that $b_{n}$ is negligible by showing 
\[
\mbox{\ensuremath{\exp}}\left(b_{n}\right)<\exp(a_{n}+b_{n})=\exp\left(I_{n}\right),
\]
then it will follow that 
\[
\exp\left(b{}_{n}^{\prime}\right)\leq\exp\left(b_{n}\right)<\exp(a_{n})\leq\exp(a_{n}+b{}_{n}^{\prime}).
\]
Hence, $b{}_{n}^{\prime}$ is also negligible, and the theorem follows. 

Indeed, by \cite{key-10} theorem 5.10.3, $d_{\lambda}\leq n^{m^{2}}(m^{2}-1)^{n}$,
where $\lambda\vdash n$ and $\lambda_{m^{2}}\leq1$. Thus by \lemref{m-poly},
\[
b_{n}=\sum_{\begin{array}{c}
\lambda\vdash n\\
\lambda_{m^{2}}\leq1
\end{array}}a_{\lambda}d_{\lambda}\leq p(n)n^{m^{2}}(m^{2}-1)^{n}.
\]
So, $\mbox{\ensuremath{\exp}}\left(b_{n}\right)\leq m^{2}-1$, and
on the other hand, by \corref{codim-asymp}, $\exp(I_{n})=m^{2}$.
\end{proof}

\subsection{The final step}

Recall that \thmref{.glory} is true only when the group $\widetilde{G}$
is connected. We intend to remove this assumption, leaving no restrictions
on $\widetilde{G}$. First we recall some notations we need from \corref{We-have-where}: 

The group $H_{\mathfrak{g}}$ acts on $B$ by left multiplication.
Let $B'\subseteq B$ be a set of representatives of the orbits (of
the above action), and 
\[
I{}_{n}^{\prime}=\sp\left\{ T{}_{\sigma,\mathbf{h}}^{\prime}|\sigma\in S_{n},\mathbf{h}\in B^{n}\right\} =\bigoplus_{g\in H_{\mathfrak{g}}}L_{g},
\]
 where $L_{g}=\sp\left\{ T{}_{\sigma,\mathbf{h}}^{\prime}\in I{}_{n}^{\prime}|h_{1}\in gB'\right\} $.

In the same spirit, denote 
\[
SI{}_{n}^{\prime}=\sp\left\{ T{}_{\sigma,\mathbf{h}}^{\prime}|\sigma\mbox{ is a cycle},\mathbf{h}\in B^{n}\right\} =\bigoplus_{g\in H_{\mathfrak{g}}}SL_{g},
\]
 where $SL_{g}=\sp\left\{ T{}_{\sigma,\mathbf{h}}^{\prime}\in SI{}_{n}^{\prime}|h_{1}\in gB'\right\} $.

Let $\overline{G}^{\circ}$ be the identity component of $\overline{G}$,
clearly $\overline{G}^{\circ}=GL_{m_{1}}(F)\times\cdots\times GL_{m_{k}}(F)$.
Observe that $T_{n}^{\overline{G}^{\circ}}=I{}_{n}^{\prime}$. So,
by \thmref{.glory}, $\dim_{F}I{}_{n}^{\prime}\sim\dim_{F}SI{}_{n}^{\prime}$.
As in \corref{We-have-where}, we find that $T_{n}^{\widetilde{G}}=I_{n}\cong L_{g}$
and $SI_{n}\cong SL_{g}$ for every $g\in H_{\mathfrak{g}}$. Thus,
\[
\dim_{F}I_{n}=\frac{1}{|H_{\mathfrak{g}}|}\dim_{F}I{}_{n}^{\prime}\sim\frac{1}{|H_{\mathfrak{g}}|}\dim_{F}SI{}_{n}^{\prime}=\dim_{F}SI_{n}.
\]

\section{\label{sec:SI=00003DC} The Codimension Sequence Asymptotics of a
Matrix Algebra with Elementary Grading}

In this section we will prove that the spaces of special invariants
approximate the codimension sequence. More precisely, we define a
subspace $\Tr(C_{n}^{G})$ of $SI_{n}$ with dimension $c_{n-1}^{G}(A)$,
and show that it is dominant in $SI_{n}$ (that is, $\Tr(C_{n}^{G})\sim SI_{n}$).
Combining this with the previous parts of this article shows $c_{n-1}^{G}(A)\sim t_{n}^{G}(A)$.
Thus, by \corref{codim-asymp}, the asymptotics of $c_{n}^{G}(A)$
will be established. To present $\Tr(C_{n}^{G})$ we need the following
definitions.

Let 
\[
R_{n}^{G}(A)=F\left\langle U_{g}^{(r)}=\sum_{t,tg\in B;i,j}u_{t(i),tg(j)}^{(r)}e_{t(i),tg(j)}\,|\, g\in B^{-1}B\,\,1\leq r\leq n\right\rangle 
\]
 be the ring of $G$-graded $m\times m$ generic matrices in the variables
$u_{s(i),t(j)}^{(r)}$, where $s,t\in B$, $1\leq i\leq m_{s}$, $1\leq j\leq m_{t}$
and $1\leq r\leq n$. It is well known that $R_{n}^{G}(A)$ is isomorphic
to the $G$-graded relatively free algebra i.e. 
\[
R_{n}^{G}(A)\backsimeq\nicefrac{F\left\langle x_{1,g_{1}},...,x_{n,g_{1}},...,x_{1,g_{k}},...,x_{n,g_{k}}\right\rangle }{Id^{G}(A)},
\]
so the subspace of multilinear elements in $R_{n}^{G}(A)$ is

\[
C_{n}^{G}=\sp\left\{ U_{h_{\sigma(1)}}^{\left(\sigma(1)\right)}\cdots U_{h_{\sigma(n)}}^{\left(\sigma(n)\right)}|\sigma\in S_{n}\,,\, h_{i}\in G\right\} \subset R_{n}^{G}(A).
\]
Obviously its dimension is $c_{n}^{G}(A)$.

\subsection{The space of traces}

Let $tr:M_{m}(F)\to F$ be the standard trace function. We are interested
in the spaces $\Tr(C_{n}^{G})=\left\{ tr(x)\,|\, x\in C_{n}^{G}\right\} $
which, as we will show, have dimension $c_{n-1}^{G}(A)$, and dominant
in $SI_{n}$. 
\begin{lem}
$\dim_{F}\Tr(C_{n}^{G})=c_{n-1}^{G}(A)$. \end{lem}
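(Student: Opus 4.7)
My plan is to exhibit an explicit $F$-linear isomorphism
\[
\phi:C_{n-1}^{G}\longrightarrow\Tr(C_{n}^{G}),
\]
using the trace pairing to promote the last variable from a ``regular'' one to one appearing only inside a trace. Since $C_{n-1}^{G}=\bigoplus_{g\in G}C_{n-1,g}^{G}$ decomposes into graded-homogeneous components, it suffices to define $\phi$ on each piece: for $f\in C_{n-1,g}^{G}$ set
\[
\phi(f)\;=\;\tr\!\bigl(f\cdot U^{(n)}_{g^{-1}}\bigr),
\]
so that $f\cdot U^{(n)}_{g^{-1}}$ is a multilinear element of $C_{n}^{G}$ of total $G$-degree $e$ (the only degree on which $\tr$ is non-zero), and $\phi(f)\in\Tr(C_{n}^{G})$.

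For surjectivity I would argue as follows. A typical spanning element of $\Tr(C_{n}^{G})$ is $\tr\bigl(U^{(\sigma(1))}_{h_{\sigma(1)}}\cdots U^{(\sigma(n))}_{h_{\sigma(n)}}\bigr)$, which vanishes identically unless $h_{\sigma(1)}\cdots h_{\sigma(n)}=e$. Using the cyclic invariance of $\tr$, I can cyclically rotate so that the factor involving the index $n$ sits at the far right; this rewrites the trace as $\tr(m\cdot U^{(n)}_{h})$ for an appropriate multilinear monomial $m$ of graded degree $h^{-1}$ in the first $n-1$ variables. Thus every spanning element of $\Tr(C_{n}^{G})$ is in the image of $\phi$.

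For injectivity I would use the non-degeneracy of the graded trace form. If $\phi(f)=0$ with $f=\sum_{g}f_{g}$, then the terms $\tr(f_{g}\cdot U^{(n)}_{g^{-1}})$ for distinct $g$'s involve disjoint sets of entry-variables of the $n$-th generic matrix (since the supports $\{(t(i),tg^{-1}(j))\}$ of $U^{(n)}_{g^{-1}}$ are pairwise disjoint as $g$ varies), so each $\tr(f_{g}\cdot U^{(n)}_{g^{-1}})$ must vanish identically. Fix such a $g$ and suppose for contradiction that $f_{g}$ is not a $G$-graded identity. Then some admissible substitution makes $f_{g}$ evaluate to a non-zero $M\in A_{g}$; but the matrix trace pairing $A_{g}\times A_{g^{-1}}\to F$, $(X,Y)\mapsto\tr(XY)$, is non-degenerate (for an elementary grading this is just the standard dual-basis statement for elementary matrices $e_{i,j}\in A_{g}$ paired with $e_{j,i}\in A_{g^{-1}}$), giving a $Y\in A_{g^{-1}}$ with $\tr(MY)\neq0$, contradiction. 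Hence each $f_{g}$ lies in $\mathrm{Id}^{G}(A)$, i.e.\ represents $0$ in $C_{n-1}^{G}$, and $\phi$ is injective.

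The one subtlety to watch — and the place where a naive argument might fail — is the disjointness of variables across different $g^{-1}$ components of $U^{(n)}$, which is what lets me reduce the general (non-homogeneous) injectivity statement to the homogeneous one; this is special to the elementary grading setup and follows at once from the definition of the graded generic matrix. With these three steps in hand, $\dim_{F}\Tr(C_{n}^{G})=\dim_{F}C_{n-1}^{G}=c_{n-1}^{G}(A)$.
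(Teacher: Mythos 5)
Your proof is correct and takes essentially the same route as the paper, which simply invokes the nondegenerate $\widetilde{G}$-invariant trace form via its Lemma on the pairing $\psi(f)=tr(f,x_{n})$ together with the embedding $C_{n-1}^{G}\hookrightarrow\Hom(A^{\otimes(n-1)},A)$. You merely make explicit the two steps the paper leaves implicit — cyclic invariance of $tr$ to see that the image is all of $\Tr(C_{n}^{G})$, and the component-wise nondegeneracy of $tr:A_{g}\times A_{g^{-1}}\to F$ for injectivity — both of which are handled correctly.
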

\begin{proof}
$tr$ surely satisfies all the conditions of \lemref{trace}.
\end{proof}
Next we intend to demonstrate that $t_{n}^{G}(A)=\dim_{F}SI_{n}\sim\dim_{F}\Tr(C_{n}^{G})=c_{n-1}^{G}(A)$.
The plan is to define yet another two subspaces of $SI_{n}$ ($CSI_{n}$
and $COSI_{n}$) such that

\begin{equation}
\xymatrix{\Tr(C_{n}^{G})\ar@{^{(}->}[r] & SI_{n}\\
COSI_{n}\ar@{^{(}->}[r]\ar@{^{(}->}[u] & CSI_{n}\ar@{^{(}->}[u]
}
\label{eq:diagram}
\end{equation}
By proving that $CSI_{n}$ is dominant in $SI_{n}$, and that $COSI_{n}$
is dominant in $CSI_{n}$ the desired result, i.e. $\dim_{F}\Tr(C_{n}^{G})\sim\dim_{F}SI_{n}$,
will hold.
\begin{rem}
\label{rem:T-tag}Recall that 
\[
\pi:T_{n}\longrightarrow\End_{F}(V^{\otimes n})
\]
\[
\pi\left(u_{s_{1}(i_{1}),t_{1}(j_{1})}^{(1)}\cdots u_{s_{n}(i_{n}),t_{n}(j_{n})}^{(n)}\right)=T{}_{\mathbf{s(i)},\mathbf{t(j)}}^{\prime},
\]
 where $T{}_{\mathbf{s(i)},\mathbf{t(j)}}^{\prime}(e_{\mathbf{h(k)}})=\delta_{\mathbf{h(k)},\mathbf{s(i)}}e_{\mathbf{t(j)}}$,
and notice that by definition 
\[
T{}_{\sigma,\mathbf{g}}^{\prime}=\sum_{\mathbf{i\in}M_{\mathbf{g}}}T{}_{\mathbf{g(i),g^{\sigma}(i^{\sigma})}}^{\prime},
\]
 where $M_{\mathbf{g}}=\{1,..,m_{g_{1}}\}\times\cdots\times\{1,...,m_{g_{n}}\}$.
\end{rem}
Let's start by showing that via the isomorphism $\pi$ the space $\Tr(C_{n}^{G})$
is embedded in $SI_{n}$.
\begin{lem}
\textup{\label{lem:TR-in-SI}Let $Z=U_{h_{\sigma(1)}}^{\left(\sigma(1)\right)}\cdots U_{h_{\sigma(n)}}^{\left(\sigma(n)\right)}$
be a monomial such that $0\ne tr(Z)$ (in particular, $Z\neq0$).
Then there is $\mathbf{\tilde{h}=}(\tilde{h}_{1},..,\tilde{h}_{n})\in B^{n}$
such that
\[
\pi\left(tr(X)\right)=\sum_{t\in\left[H_{\mathbf{\tilde{h}}}:H_{\mathfrak{g}}\right]}T_{\tau,t\mathbf{\tilde{h}}},
\]
where }$H_{\mathbf{\tilde{h}}}=\left\{ t\in B|\{t\tilde{h}_{1},...,t\tilde{h}_{n}\}\subseteq B\right\} $
and \textup{$\left[H_{\mathbf{\tilde{h}}}:H_{\mathfrak{g}}\right]$
is a full representative set of the left $H_{\mathfrak{g}}$ cosets
of $H_{\mathbf{\tilde{h}}}$}. \end{lem}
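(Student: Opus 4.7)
The plan is to expand $tr(Z)$ explicitly as a polynomial in the generic entries, apply $\pi$ to reinterpret it as a sum of block-permutation operators on $V^{\otimes n}$, and then repackage that sum by $H_{\mathfrak{g}}$-cosets. Writing $Y_r := U_{h_r}^{(r)}$, so that $Z = Y_{\sigma(1)} \cdots Y_{\sigma(n)}$, I would first write
$$tr(Z) = \sum_{c_1,\ldots,c_n}(Y_{\sigma(1)})_{c_1 c_2}(Y_{\sigma(2)})_{c_2 c_3}\cdots(Y_{\sigma(n)})_{c_n c_1}.$$
The block structure of $U_g^{(r)}$ forces $c_k = t_k(i_k)$ with $t_k \in B$ and $t_{k+1} = t_k h_{\sigma(k)}$ (indices mod $n$); once $t_1 \in B$ is chosen, the remaining $t_k$ are determined, and cyclic consistency gives $h_{\sigma(1)}\cdots h_{\sigma(n)} = e$, which is guaranteed by $tr(Z)\ne 0$.

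Next I would reorder the monomial factors by superscript, substituting $r = \sigma(k)$ to get
$$tr(Z) = \sum_{\mathbf s,\mathbf j} u^{(1)}_{s_1(j_1),\, s_{\rho(1)}(j_{\rho(1)})}\cdots u^{(n)}_{s_n(j_n),\, s_{\rho(n)}(j_{\rho(n)})},$$
where $s_r := t_{\sigma^{-1}(r)}$, $j_r := i_{\sigma^{-1}(r)}$, and $\rho := \sigma\circ(1\,2\,\cdots\,n)\circ\sigma^{-1}\in S_n$. Being conjugate to an $n$-cycle, $\rho$ is itself an $n$-cycle. Applying $\pi$ termwise produces $T'_{\mathbf{s(j)},\,\mathbf{s(j)}^\rho}$, and summing the inner indices $\mathbf j \in M_{\mathbf s}$ collapses each fixed-$\mathbf s$ contribution to $T'_{\rho,\mathbf s}$ via Remark \ref{rem:T-tag}. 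Hence
$$\pi\bigl(tr(Z)\bigr) = \sum_{\mathbf s\text{ valid}} T'_{\rho,\mathbf s},$$
where $\mathbf s\in B^n$ is valid iff it satisfies the translated sequential constraints $s_{\rho(r)} = s_r h_r$.

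Finally, fix one valid tuple $\mathbf{\tilde h}$, whose existence is exactly the hypothesis $tr(Z)\ne 0$. Because both $\mathbf s$ and $\mathbf{\tilde h}$ obey the same multiplicative recursion, the ratio $s_r\tilde h_r^{-1}$ is independent of $r$; thus every valid tuple has the form $t\mathbf{\tilde h} = (t\tilde h_1,\ldots,t\tilde h_n)$ for some $t$, and membership in $B^n$ is exactly $t \in H_{\mathbf{\tilde h}}$, giving
$$\pi\bigl(tr(Z)\bigr) = \sum_{t\in H_{\mathbf{\tilde h}}} T'_{\rho,\,t\mathbf{\tilde h}}.$$
Because $H_{\mathfrak{g}}B = B$, the subgroup $H_{\mathfrak{g}}$ acts on $H_{\mathbf{\tilde h}}$ by left multiplication; partitioning $H_{\mathbf{\tilde h}}$ into left $H_{\mathfrak{g}}$-cosets and invoking the definition $T_{\tau,\mathbf g} = \sum_{h\in H_{\mathfrak{g}}} T'_{\tau,h\mathbf g}$ groups the sum into
$$\pi\bigl(tr(Z)\bigr) = \sum_{t\in [H_{\mathbf{\tilde h}}:H_{\mathfrak{g}}]} T_{\tau,\,t\mathbf{\tilde h}}, \qquad \tau:=\rho,$$
as claimed.

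The main obstacle will be the bookkeeping in the middle step: pinning down that the reordering produces precisely the conjugate $n$-cycle $\rho$, and verifying that the set of valid tuples coincides exactly with $\{t\mathbf{\tilde h}:t\in H_{\mathbf{\tilde h}}\}$ (in particular, checking that the normalization of $\mathbf{\tilde h}$ allows the parameter $t$ to be taken in $B$, as in the statement of the lemma). The remaining manipulations are formal consequences of Remark \ref{rem:T-tag} and the definition of $T_{\sigma,\mathbf g}$.
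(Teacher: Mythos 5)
Your proposal is correct and follows essentially the same route as the paper: expand the trace block by block, observe that the sequential constraints $t_{k+1}=t_kh_{\sigma(k)}$ determine the whole tuple from its first entry, reorder by superscript to produce the $n$-cycle $\rho=\sigma(1\,2\cdots n)\sigma^{-1}$ (which is exactly the paper's $\tau=(\sigma_1\,\sigma_2\cdots\sigma_n)$), collapse via Remark \ref{rem:T-tag} to $T'_{\rho,t\tilde{\mathbf h}}$, and regroup the $t$'s into left $H_{\mathfrak g}$-cosets of $H_{\tilde{\mathbf h}}$. The one point you flag at the end — normalizing $\tilde{\mathbf h}$ so that the parameter $t$ lies in $B$ — is handled in the paper by taking $\tilde h_{\sigma(1)}$ to be trivial (so the tuple of partial products starts at $e$), and is indeed the only bookkeeping detail left to pin down.
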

\begin{proof}
For simplicity denote $\sigma(i)=\sigma_{i}$. If $h_{\sigma_{1}}\cdots h_{\sigma_{n}}\ne e$
then the trace will be zero. So, we may assume that $h_{\sigma_{1}}\cdots h_{\sigma_{n}}=e$. 

Let $trZ_{t,t}$ be the trace of the $(t,t)$-block of $X$, and define
$\mathbf{\tilde{h}=}(\tilde{h}_{1},..,\tilde{h}_{n})$ by $\tilde{h}_{\sigma_{i}}=h_{\sigma_{1}}\cdots h_{\sigma_{i-1}}$
and $\tilde{h}_{\sigma_{1}}=t$, such that $tr(Z)_{t,t}\neq0$. Then
for any $t\in H_{\tilde{\mathbf{h}}}$, 
\[
tr\left(U_{h_{\sigma_{1}}}^{(\sigma_{1})}\cdots U_{h_{\sigma_{n}}}^{(\sigma_{n})}\right)_{t,t}=\sum_{\mathbf{i}\in M_{\mathbf{\tilde{h}}}}u_{t(i_{\sigma_{1}}),th_{\sigma_{1}}(i_{\sigma_{2}})}^{(\sigma_{1})}u_{th_{\sigma_{1}}(i_{\sigma_{2}}),th_{\sigma_{1}}h_{\sigma_{2}}(i_{\sigma_{3}})}^{(\sigma_{2})}\cdots u_{th_{\sigma_{1}}\cdots h_{\sigma_{n-1}}(i_{\sigma_{n}}),t(i_{\sigma_{1}})}^{(\sigma_{n})}
\]
otherwise $tr\left(U_{h_{\sigma_{1}}}^{(\sigma_{1})}\cdots U_{h_{\sigma_{n}}}^{(\sigma_{n})}\right)_{t,t}=0$
(since $t\notin B$ or one of the $th_{\sigma_{1}}\cdots h_{\sigma_{i}}=t\tilde{h}_{i+1}$
is not in $B$). By taking the $n$-cycle $\tau=(\sigma_{1},...,\sigma_{n})$
in $S_{n}$ we can rewrite the expression above 

\[
tr\left(U_{h_{\sigma_{1}}}^{(\sigma_{1})}\cdots U_{h_{\sigma_{n}}}^{(\sigma_{n})}\right)_{t,t}=\sum_{\mathbf{i}\in M_{\mathbf{\tilde{h}}}}u_{t\tilde{h}_{\sigma_{1}}(i_{\sigma_{1}}),t\tilde{h}_{\tau(\sigma_{1})}(i_{\tau(\sigma_{1})})}^{(\sigma_{1})}\cdots u_{t\tilde{h}_{\sigma_{n}}(i_{\sigma_{n}}),t\tilde{h}_{\tau(\sigma_{n})}(i_{\tau(\sigma_{n})})}^{(\sigma_{n})}.
\]
Reordering the monomial results in the simpler expression: 
\[
tr\left(U_{h_{\sigma_{1}}}^{(\sigma_{1})}\cdots U_{h_{\sigma_{1}}}^{(\sigma_{n})}\right)_{t,t}=\sum_{\mathbf{i}\in M_{\mathbf{\tilde{h}}}}u_{t\tilde{h}_{1}(i_{1}),t\tilde{h}_{\tau(1)}(i_{\tau(1)})}^{(1)}\cdots u_{t\tilde{h}_{n}(i_{n}),t\tilde{h}_{\tau(n)}(i_{\tau(n)})}^{(n)}.
\]
By applying $\pi$ on this equation together with \remref{T-tag}
we obtain
\[
\pi\left(tr\left(U_{h_{\sigma_{1}}}^{(\sigma_{1})}\cdots U_{h_{\sigma_{1}}}^{(\sigma_{n})}\right)_{t,t}\right)=\sum_{\mathbf{i}\in M_{\mathbf{\tilde{h}}}}T{}_{t\mathbf{\tilde{h}(i)},t\mathbf{\tilde{h}^{\tau}(i^{\tau})}}^{\prime}=T{}_{\tau,t\mathbf{\tilde{h}}}^{\prime}.
\]
 Summing it up gives us 
\[
\pi\left(tr\left(U_{h_{\sigma_{1}}}^{(\sigma_{1})}\cdots U_{h_{\sigma_{1}}}^{(\sigma_{n})}\right)\right)=\sum_{t\in H_{\mathbf{\tilde{h}}}}T{}_{\tau,t\mathbf{\tilde{h}}}^{\prime}.
\]

Finally, note that $H_{\mathfrak{g}}$\textbf{ }is a subgroup of \textbf{$H_{\mathbf{\tilde{h}}}$},
and $T_{\tau,\mathbf{g}}=\sum_{g\in H_{\mathfrak{g}}}T{}_{\tau,g\mathbf{g}}^{\prime}$.
Thus,
\[
\pi\left(tr\left(U_{h_{\sigma_{1}}}^{(\sigma_{1})}\cdots U_{h_{\sigma_{1}}}^{(\sigma_{n})}\right)\right)=\sum_{t\in\left[H_{\mathbf{\tilde{h}}}:H_{\mathfrak{g}}\right]}T_{\tau,t\mathbf{\tilde{h}}}.
\]
\end{proof}
\begin{cor}
\textup{\label{cor:TrinSI}}$\Tr(C_{n}^{G})$ is embedded in $SI_{n}$.
\end{cor}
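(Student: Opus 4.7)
The corollary is essentially an immediate consequence of the preceding lemma, together with the fact that $\pi\colon T_n \to \End_F(V^{\otimes n})$ is an isomorphism of vector spaces (established in Subsection~\ref{sub:The-isomorphism-between}). The plan is therefore to read off what Lemma~\ref{lem:TR-in-SI} says about the image of $\pi \circ \Tr$, check it lands in the span of cycle-indexed generators, and then invoke injectivity of $\pi$.

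More precisely, I would proceed as follows. First, observe that $\Tr(C_n^G)$ is by construction a subspace of the multilinear part of $F[A^{\times n}]$, which we have already identified with $T_n$; hence $\pi$ restricts to a linear map $\pi|_{\Tr(C_n^G)}\colon \Tr(C_n^G) \to \End_F(V^{\otimes n})$. Since $\Tr(C_n^G)$ is spanned by traces of multilinear monomials $Z = U_{h_{\sigma(1)}}^{(\sigma(1))} \cdots U_{h_{\sigma(n)}}^{(\sigma(n))}$, it is enough, by linearity, to control $\pi(\Tr(Z))$ for each such $Z$. If $tr(Z)=0$ there is nothing to check. Otherwise, Lemma~\ref{lem:TR-in-SI} produces an $n$-cycle $\tau = (\sigma(1)\, \sigma(2)\, \cdots\, \sigma(n))$ and a vector $\tilde{\mathbf{h}}\in B^n$ with
\[
\pi\bigl(tr(Z)\bigr) \;=\; \sum_{t\in[H_{\tilde{\mathbf{h}}}:H_{\mathfrak{g}}]} T_{\tau,\,t\tilde{\mathbf{h}}}.
\]
Each summand $T_{\tau,\,t\tilde{\mathbf{h}}}$ is by definition an element of $SI_n$ because its permutation part $\tau$ is an $n$-cycle, so the whole expression lies in $SI_n$. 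Consequently $\pi\bigl(\Tr(C_n^G)\bigr) \subseteq SI_n$.

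To upgrade this containment into an embedding, I would invoke the fact, recorded in Subsection~\ref{sub:The-isomorphism-between}, that $\pi\colon T_n \to \End_F(V^{\otimes n})$ is a linear isomorphism. In particular its restriction to $\Tr(C_n^G)$ is injective, so the composition
\[
\Tr(C_n^G) \;\xhookrightarrow{\;\pi\;}\; SI_n
\]
is an injective linear map, as required. There is no genuine obstacle here: all the real work has already been performed in Lemma~\ref{lem:TR-in-SI}, whose combinatorial identification of $\pi(tr(Z))$ with a sum of cycle-indexed invariants $T_{\tau,\mathbf{g}}$ is precisely what forces the image into $SI_n$; the present corollary simply packages that identification together with injectivity of $\pi$.
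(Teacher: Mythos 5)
Your argument is correct and is exactly the route the paper intends: the corollary is stated without proof because it follows immediately from Lemma \ref{lem:TR-in-SI} (whose conclusion exhibits $\pi(tr(Z))$ as a sum of $T_{\tau,t\tilde{\mathbf{h}}}$ with $\tau$ an $n$-cycle, hence an element of $SI_n$) combined with the injectivity of the isomorphism $\pi$ from Subsection \ref{sub:The-isomorphism-between}. Nothing is missing.
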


\subsection{Complete invariants}

We now approach the definition of the first mentioned subspace of
$SI_{n}$: the space of \textit{complete special invariants} $CSI_{n}$. 
\begin{defn}
A vector $\mathbf{g}\in B^{n}$ is \emph{complete} if all the elements
of $B$ appear in $\mathbf{g}$. Moreover, denote $CSI_{n}=\sp\left\{ T_{\sigma,\mathbf{g}}\in SI_{n}|\mathbf{g}\mbox{ is complete}\right\} $. 
\end{defn}
We claim that this space is dominant in $SI_{n}$. In other words:
\begin{lem}
\label{lem:SI=00003DCSI}$\dim_{F}SI_{n}\sim\dim_{F}CSI_{n}$\end{lem}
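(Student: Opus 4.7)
The strategy is to show that the complement of $CSI_n$ inside $SI_n$ grows strictly slower in the exponential sense than $SI_n$ itself, hence is asymptotically negligible. Since Theorem~\ref{thm:.glory} together with Corollary~\ref{cor:codim-asymp} gives
$$\dim_F SI_n \sim \dim_F I_n \sim \alpha\, n^{(1-\dim_F A_e)/2}\, m^{2n},$$
we in particular have $\exp(SI_n) = m^2$. So it is enough to bound the span of the ``incomplete'' $T_{\sigma,\mathbf{g}}$'s by an expression whose exponent is strictly less than $m^2$.

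Decompose $SI_n = \bigoplus_{\emptyset \neq B'' \subseteq B} SI_n^{(B'')}$, where $SI_n^{(B'')}$ is the span of those $T_{\sigma,\mathbf{g}} \in SI_n$ for which the set of distinct entries of $\mathbf{g}$ equals $B''$; this is a direct sum because distinct $\mathbf{g}$'s give linearly independent basis vectors, and by definition $CSI_n = SI_n^{(B)}$. For every proper subset $B'' \subsetneq B$, the space $SI_n^{(B'')}$ is contained in
$$SI_n^{[B'']} = \sp\{T_{\sigma,\mathbf{g}} \in SI_n : \text{the entries of } \mathbf{g} \text{ lie in } B''\} \subseteq I_n^{[B'']},$$
where $I_n^{[B'']}$ is defined in the same way without the $n$-cycle restriction on $\sigma$.

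Repeating the enumeration of Proposition~\ref{prop:Calc} (and Corollary~\ref{cor:We-have-where}) with the sum restricted to tuples $\mathbf{g}$ whose entries lie in $B''$ yields
$$\dim_F I_n^{[B'']} = \frac{1}{|H_\mathfrak{g}|} \sum_{\substack{n_1 + \cdots + n_k = n \\ n_i = 0 \text{ if } \gamma_i \notin B''}} \binom{n}{n_1,\ldots,n_k}^2 \prod_{i=1}^k t_{n_i}(m_i).$$
This sum is exactly the expression that Proposition~\ref{prop:Calc} produces for the matrix algebra $M_{m''}(F)$ elementarily graded by the sub-vector of $\mathfrak{g}$ supported on $B''$, where $m'' = \sum_{\gamma_i \in B''} m_i$. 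Running the asymptotic analysis of Corollary~\ref{cor:codim-asymp} verbatim on this smaller sum (applying Theorem~\ref{thm:B-R} to the restricted index set) gives $\dim_F I_n^{[B'']} \sim c_{B''}\, n^{b_{B''}}\, (m'')^{2n}$, so $\exp(I_n^{[B'']}) = (m'')^2 < m^2$ because $B'' \subsetneq B$ forces $m'' < m$.

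Summing over the (finitely many) proper subsets,
$$\dim_F SI_n - \dim_F CSI_n = \sum_{B'' \subsetneq B} \dim_F SI_n^{(B'')} \leq \sum_{B'' \subsetneq B} \dim_F I_n^{[B'']}$$
has exponent at most $\max_{B'' \subsetneq B}(m'')^2 < m^2 = \exp(SI_n)$, so the left-hand side is $o(\dim_F SI_n)$ and the asymptotic equivalence $\dim_F SI_n \sim \dim_F CSI_n$ follows. The only point that requires any care is the identification of the restricted sum with the invariant-count formula of a lower-dimensional matrix algebra; once this is in place the Beckner--Regev estimate supplies the needed strict inequality of exponents automatically.
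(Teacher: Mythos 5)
Your proof is correct and follows essentially the same route as the paper: drop the $n$-cycle restriction, bound the incomplete part by invariants supported on a proper subset of $B$, identify those with the invariants of a smaller elementary-graded matrix algebra, and conclude via the strict inequality of exponents $(m'')^2<m^2$. The only cosmetic differences are that you stratify by the exact support set $B''$ rather than by a single missing element $g\in B$ (and your ``direct sum'' is really only a spanning decomposition, since the support of $\mathbf{g}$ is only defined up to the $H_{\mathfrak{g}}$-action; but as you only need an upper bound on $\dim_F SI_n-\dim_F CSI_n$, subadditivity suffices and nothing is lost).
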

\begin{proof}
It is enough to show that $\exp\left(NCSI_{n}\right)<\exp\left(SI_{n}\right)(=\exp\left(I_{n}\right)=m^{2})$,
where $NCSI_{n}=\sp\left\{ T_{\sigma,\mathbf{g}}\in SI_{n}|\mathbf{g}\mbox{ isn't complete}\right\} $,
since $\dim_{F}SI_{n}=\dim_{F}CSI_{n}+\dim_{F}NCSI_{n}$. 

Consider the spaces $NCI_{n}=\sp\left\{ T_{\sigma,\mathbf{g}}\in I_{n}|\mathbf{g}\mbox{ isn't complete}\right\} $.
Because $NCSI_{n}\subseteq NCI_{n}$, it suffices to establish: 
\[
\exp\left(NCI_{n}\right)<m^{2}.
\]
Notice we can write $NCI_{n}=\sum_{g\in B}NCI_{n}(g)$, where 
\[
NCI_{n}(g)=\sp\left\{ T_{\sigma,\mathbf{g}}\in I_{n}|g\mbox{ doesn't appear in }\mathbf{g}\right\} .
\]
It is, therefore, enough to prove that $\exp\left(NCI_{n}(g)\right)<m^{2}$
for any $g\in B$. To show this we must recall some of the notation
of \secref{Invariants}: 

The operators $T{}_{\sigma,\mathbf{g}}^{\prime}(A)=T{}_{\sigma,\mathbf{g}}^{\prime}$
are elements in $\End_{F}(V^{\otimes n})$, where $V=\oplus_{g\in B}V_{g}$
and $\dim_{F}V_{g}=m_{g}$. Fix $g\in G$, and consider the $G$-graded
algebra $\bar{A}=M_{m-m_{g}}(F)$ with elementary grading corresponding
to the grading vector $\mathfrak{g'}$ which is obtained from $\mathfrak{g}$
by erasing all the $g$'s. For this algebra $T'_{\sigma,\mathbf{g}}(\bar{A})$
are elements in $\End\left(\bar{V}^{\otimes n}\right)$ where $\bar{V}=\oplus_{g\ne h\in B}V_{h}$,
so it is easy to see that the mapping 
\[
I{}_{n}^{\prime}(\bar{A})=\sp\left\{ T{}_{\sigma,\mathbf{g}}^{\prime}(\bar{A})\right\} \rightarrow\sp\left\{ T{}_{\sigma,\mathbf{g}}^{\prime}(A)|g\mbox{ doesn't appear in \textbf{\ensuremath{\mathbf{g}}}}\right\} :=\left(I'\right)_{n}^{(g)}(A)
\]
taking $T{}_{\sigma,\mathbf{g}}^{\prime}(\bar{A})$ to $T{}_{\sigma,\mathbf{g}}^{\prime}(A)$,
is an isomorphism of vector spaces. Moreover, since $m_{t}=m_{ht}$
for every $h\in H_{\mathfrak{g}}$ and $t\in B$, the mapping 
\[
\left(I'\right)_{n}^{(g)}(A)\rightarrow\left(I'\right)_{n}^{(hg)}(A)
\]
taking $T{}_{\sigma,\mathbf{g}}^{\prime}(A)$ to $T{}_{\sigma,h\mathbf{g}}^{\prime}(A)$
is also an isomorphism of vector spaces for every $h\in H_{\mathfrak{g}}$.
Hence, by virtue of \corref{codim-asymp} and \corref{We-have-where},
\[
\exp\left(\left(I'\right)_{n}^{(g)}(A)\right)=\exp\left(\left(I'\right)_{n}^{(hg)}(A)\right)=\exp\left(I{}_{n}^{\prime}(\bar{A})\right)=\exp\left(t_{n}^{G}(\bar{A})\right)=(m-m_{g})^{2}.
\]
Since, 
\[
NCI_{n}(g)=\sp\left\{ T_{\sigma,\mathbf{g}}(A)=\sum_{h\in H_{\mathfrak{g}}}T{}_{\sigma,h\mathbf{g}}^{\prime}(A)|g\mbox{ doesn't appear in }\mathbf{g}\right\} \subset\sum_{h\in H_{\mathfrak{g}}}\left(I'\right)_{n}^{(hg)}(A),
\]
 $\exp\left(NCI_{n}(g)\right)=(m-m_{g})^{2}<m^{2}$.
\end{proof}

\subsection{In-order invariants}

To define the second mentioned subspace of $SI_{n}$ we first consider
the decomposition $B=\bigsqcup_{i=1}^{s}B_{i}$ where $B_{i}$ consists
of all the $g$'s in $B$ having a common value $b_{i}=m_{g}$ for
all $g\in B_{i}$. We assume that $b_{1}<b_{2}<\cdots<b_{s}$ and
recall that $m_{1}\leq\cdots\leq m_{k}$. Next, for every vector $\mathbf{g}\in B^{n}$
we define $n_{\gamma_{i}}(\mathbf{g})=\#\mbox{ appearances of \ensuremath{\gamma_{i}} in \ensuremath{\mathbf{g}}}$.
A vector $\mathbf{g}$ is called\textit{ in-order} if $n_{h_{1}}(\mathbf{g})<\cdots<n_{h_{s}}(\mathbf{g})$
for all $h_{1}\in B_{1},...,h_{s}\in B_{s}$. 
\begin{example}
Suppose $G=\mathbb{Z}_{3}=\{0,1,2\}$ and the grading vector $\mathfrak{g}=(2,0,0,0,1,1,1)$.
In this case, $B=B_{1}\cup B_{2}$, where $B_{1}=\{2\}$ and $B_{2}=\{0,1\}$.
Moreover, $b_{1}=1$ and $b_{2}=3$. The vector $\mathbf{g}=(2,2,1,1,2,2,0,0,0,1,0)$
is not in-order, since $n_{2}(\mathbf{g})=4$ but $n_{1}(\mathbf{g})=3$;
the vector $\mathbf{h}=(0,1,1,1)$ is in-order, but not complete,
since $2\in B$ is missing. Finally, the vector $\mathbf{s}=(1,2,0,0,1)$
is in-order and complete.
\end{example}
Finally denote 
\[
COSI_{n}=\sp\left\{ T_{\sigma,\mathbf{g}}\in CSI_{n}|\mathbf{g}\mbox{ is in-order}\right\} .
\]
We claim that these spaces are dominant in $CSI_{n}$, namely:
\begin{lem}
\textup{$\dim_{F}CSI_{n}\sim\dim_{F}COSI_{n}$.}\end{lem}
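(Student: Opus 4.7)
The plan is to bound the complementary span $NOCSI_n := \sp\{T_{\sigma,\mathbf{g}}\in CSI_n : \mathbf{g}\text{ not in-order}\}$ and show $\exp(NOCSI_n) < m^2 = \exp(CSI_n)$. Since $\dim_F CSI_n = \dim_F COSI_n + \dim_F NOCSI_n$, the asymptotic equality $\dim_F CSI_n \sim \dim_F COSI_n$ will then follow by the same argument used at the end of \thmref{.glory}. As a first reduction, note that being ``in-order'' is a property of the multiplicity vector $(n_1,\ldots,n_k)$ of $\mathbf{g}$, and it fails iff there exist $i<s$ and indices $a,b$ with $\gamma_a\in B_i$, $\gamma_b\in B_{i+1}$, $n_a\ge n_b$ (the chain inequalities among the $B_j$'s are equivalent to their adjacent instances, as is easily seen from $\max_{B_i}<\min_{B_{i+1}}$). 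The number of such pairs $(a,b)$ is finite, so it suffices to prove $\exp(N_n^{(a,b)})<m^2$ for each, where
\[
N_n^{(a,b)} := \sp\{T_{\sigma,\mathbf{g}}\in I_n : n_i\ge 1\text{ for all }i,\; n_a\ge n_b\},
\]
and by construction $m_a=b_i<b_{i+1}=m_b$.

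By \remref{For-future-use},
\[
\dim_F N_n^{(a,b)} \le \frac{1}{|H_{\mathfrak{g}}|}\sum_{\substack{n_1+\cdots+n_k=n\\ n_i\ge 1,\; n_a\ge n_b}}\binom{n}{n_1,\ldots,n_k}^2 \prod_{i=1}^k t_{n_i}(m_i).
\]
I combine the crude bound $t_l(r)\le r^{2l}$ (since $L_l(r)\subseteq\End_F(U_r^{\otimes l})$) with the standard Stirling/local-CLT estimate
\[
\binom{n}{n_1,\ldots,n_k}\prod_i(m_i/m)^{n_i}\le C n^{-(k-1)/2},
\]
which is just the fact that the maximum value of the corresponding multinomial p.m.f.\ is of this order. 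This lets me split one factor of the squared multinomial coefficient into a polynomial in $n$ and leave the other as a genuine multinomial probability:
\[
\dim_F N_n^{(a,b)} \le C' n^{-(k-1)/2}\, m^{2n}\,\Pr\!\bigl(X_a\ge X_b\bigr),
\]
where $(X_1,\ldots,X_k)\sim\mathrm{Multinomial}(n; m_1/m,\ldots,m_k/m)$.

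Because $m_a<m_b$, the random variable $X_b-X_a$ has strictly positive mean $n(m_b-m_a)/m$, so a Hoeffding/Chernoff bound gives $\Pr(X_a\ge X_b)\le e^{-cn}$ with $c=(m_b-m_a)^2/(2m^2)>0$. Hence $\exp(N_n^{(a,b)})\le m^2 e^{-c}<m^2$, as required. The main obstacle is really only the bookkeeping in the previous paragraph: one must split the squared multinomial coefficient and the $t_{n_i}(m_i)$ factors carefully so that, after the crude estimates, what remains sums \emph{precisely} to the probability $\Pr(X_a\ge X_b)$; once this is set up the strict drop in the exponential factor is forced by the structural input that $\gamma_a$ and $\gamma_b$ lie in different blocks $B_i\ne B_{i+1}$ and thus correspond to distinct multiplicities $b_i<b_{i+1}$ in the grading vector.
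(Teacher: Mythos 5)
Your proof is correct, and the core estimate is obtained by a genuinely different (and in my view cleaner) route than the paper's. Both arguments reduce the lemma to showing that the span of the not-in-order $T_{\sigma,\mathbf{g}}$ has exponent strictly below $m^{2}$, and both start from \remref{For-future-use} together with the crude bound $t_{l}(r)\leq r^{2l}$. From there the paper fixes a threshold $\theta>0$ and splits the sum into the regime $n_{j}-n_{i}\geq\theta n$ (handled by extracting the factor $\alpha^{2(n_{j}-n_{i})}\leq\alpha^{2\theta n}$ with $\alpha=m_{\sigma(j)}/m_{\sigma(i)}<1$) and the regime $n_{j}-n_{i}<\theta n$ (handled by merging the two nearly equal indices via $\binom{2n_{1}}{n_{1}}\leq2^{2n_{1}}$ and the inequality $2\sqrt{xy}<x+y$, followed by letting $\theta\to0^{+}$). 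You instead absorb one factor of the squared multinomial coefficient into the uniform bound $\binom{n}{n_{1},\dots,n_{k}}\prod_{i}(m_{i}/m)^{n_{i}}\leq Cn^{-(k-1)/2}$ and recognize what remains as the multinomial probability $\Pr(X_{a}\geq X_{b})$, which is exponentially small by Hoeffding because $m_{a}<m_{b}$ forces $E[X_{b}-X_{a}]=n(m_{b}-m_{a})/m>0$; this one-step large-deviation argument removes the paper's threshold bookkeeping entirely (indeed even the trivial bound $\binom{n}{n_{1},\dots,n_{k}}\prod_{i}(m_{i}/m)^{n_{i}}\leq1$ would already suffice for the strict exponent drop). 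Your preliminary reduction of ``not in-order'' to a violation between adjacent blocks $B_{i},B_{i+1}$ is also correct and is what guarantees the strict inequality $m_{a}<m_{b}$ that the Chernoff bound needs; it plays the role of the paper's set $P\subseteq S_{k}$. The only points left implicit, in both your write-up and the paper's, are that the in-order property is constant on $H_{\mathfrak{g}}$-orbits of $\mathbf{g}$ (so that the direct-sum decomposition of $CSI_{n}$ into in-order and not-in-order parts makes sense) and that $\exp(b_{n})<\exp(a_{n}+b_{n})$ together with $\dim_{F}CSI_{n}\sim\alpha n^{\beta}m^{2n}$ yields the asymptotic equivalence; these are routine and identical in the two treatments.
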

\begin{proof}
As previously demonstrated, it is sufficient to show that $\exp\left(CNOSI_{n}\right)<m^{2}=\exp\left(CSI_{n}\right)$,
where $CNOSI_{n}=\sp\left\{ T_{\sigma,\mathbf{g}}\in CSI_{n}|\mathbf{g}\mbox{ is not in-order}\right\} $.
Consider the space $NOI_{n}$ which is the span of all the $T_{\sigma,\mathbf{g}}\in I_{n}$
where $\mathbf{g}$ is not in-order. Because $CNOSI_{n}\subseteq NOI_{n}$
it is sufficient to establish 
\[
\exp\left(NOI_{n}\right)<m^{2}.
\]
To accomplish this we use \remref{For-future-use} to obtain:\foreignlanguage{american}{
\[
\dim_{F}\left(NOI_{n}\right)=\frac{1}{|H_{\mathfrak{g}}|}\sum_{\sigma\in P<S_{k}}\sum_{\begin{array}{c}
n_{1}+\cdots+n_{k}=n\\
n_{1}\leq\cdots\leq n_{k}
\end{array}}{n \choose n_{1},...,n_{k}}^{2}t_{n_{1}}\left(m_{1}\right)\cdots t_{n_{k}}\left(m_{k}\right)
\]
where $P$ }consists of $\sigma\in S_{k}$ having the property that
there exist $i,j\in\{1,...,k\}$ such that $m_{\sigma(j)}<m_{\sigma(i)}$
but $n_{j}=n_{g_{\sigma(j)}}(\mathbf{g})>n_{g_{\sigma(i)}}(\mathbf{g})=n_{i}$\foreignlanguage{american}{.
This is less or equal to (see \propref{Calc}):}

\selectlanguage{american}%
\[
\frac{1}{|H_{\mathfrak{g}}|}\sum_{\sigma\in P}\sum_{\begin{array}{c}
n_{1}+\cdots+n_{k}=n\\
n_{1}\leq\cdots\leq n_{k}
\end{array}}{n \choose n_{1},...,n_{k}}^{2}m_{\sigma(1)}^{2n_{1}}\cdots m_{\sigma(k)}^{2n_{k}}.
\]
\foreignlanguage{english}{So it is enough to confirm that for any
$\sigma\in P$ the exponent of 
\[
S(\sigma)=\sum_{\begin{array}{c}
n_{1}+\cdots+n_{k}=n\\
n_{1}\leq\cdots\leq n_{k}
\end{array}}{n \choose n_{1},...,n_{k}}^{2}m_{\sigma(1)}^{2n_{1}}\cdots m_{\sigma(k)}^{2n_{k}}
\]
is smaller than $m^{2}$. }

\selectlanguage{english}%
Fix such $\sigma$ together with $i$ and $j$ coming from the definition
of $P$ (i.e. $m_{\sigma(j)}<m_{\sigma(i)}$ and $n_{j}>n_{i}$).
Let $1>\theta>0$ be some real constant which will be determined later
on. Split the above sum into two: {\scriptsize{}
\[
\sum_{\begin{array}{c}
n_{1}+\cdots+n_{k}=n\\
\begin{array}{c}
n_{1}\leq\cdots\leq n_{k}\\
n_{j}-n_{i}\geq\theta n
\end{array}
\end{array}}{n \choose n_{1},...,n_{k}}^{2}m_{\sigma(1)}^{2n_{1}}\cdots m_{\sigma(k)}^{2n_{k}}+\sum_{\begin{array}{c}
n_{1}+\cdots+n_{k}=n\\
\begin{array}{c}
n_{1}\leq\cdots\leq n_{k}\\
n_{j}-n_{i}<\theta n
\end{array}
\end{array}}{n \choose n_{1},...,n_{k}}^{2}m_{\sigma(1)}^{2n_{1}}\cdots m_{\sigma(k)}^{2n_{k}}.
\]
}Call the first sum $\mathcal{S}_{1}$ and the other $\mathcal{S}_{2}$.
We start with estimating $S_{1}$: For $\alpha=\frac{m_{\sigma(j)}}{m_{\sigma(i)}}<1$
and $\lambda=(i\, j)\sigma\in S_{n}$ we have {\footnotesize{}
\[
\mathcal{S}_{1}=\sum_{\begin{array}{c}
n_{1}+\cdots+n_{k}=n\\
\begin{array}{c}
n_{1}\leq\cdots\leq n_{k}\\
n_{j}-n_{i}\geq\theta n
\end{array}
\end{array}}\alpha^{2(n_{j}-n_{i})}{n \choose n_{1},...,n_{k}}^{2}m_{\lambda(1)}^{2n_{1}}\cdots m_{\lambda(k)}^{2n_{k}}\leq\alpha^{2\theta n}S(\lambda)\leq\alpha^{2\theta n}dim_{F}I_{n}.
\]
}Thus $\exp(S_{1})\leq\alpha^{2\theta}m^{2}<m^{2}$. (We used \corref{codim-asymp}
in the last step).

Next we turn to estimating $\mathcal{S}_{2}$. To simplify notation
assume, without loss of generality, that $i=1$ and $j=2$, and write
$n_{2}=n_{1}+z$. We have: 
\[
S_{2}\leq\sum_{\begin{array}{c}
0\leq z<\theta n\end{array}}\sum_{\begin{array}{c}
2n_{1}+n_{3}+\cdots+n_{k}=n-z\\
n_{1}\leq n_{3}\leq\cdots\leq n_{k}
\end{array}}{n \choose n_{1},...,n_{k}}^{2}m_{\sigma(1)}^{2n_{1}}\cdots m_{\sigma(k)}^{2n_{k}}.
\]
Notice that 
\[
{n \choose n_{1},...,n_{k}}={n-z \choose n_{1},n_{1},n_{3},...,n_{k}}\frac{n\cdots(n-z+1)}{(n_{1}+1)\cdots(n_{1}+z)}\leq{n-z \choose n_{1},n_{1},n_{3},...,n_{k}}\frac{n^{z}}{z!}.
\]
Furthermore, 
\[
{n-z \choose n_{1},n_{1},n_{3},...,n_{k}}={n-z \choose 2n_{1},n_{3},...,n_{k}}{2n_{1} \choose n_{1}}\leq2^{2n_{1}}{n-z \choose 2n_{1},n_{3},...,n_{k}}.
\]
Combining all of these together we conclude that $\mathcal{S}_{2}$
is less than or equal to{\scriptsize{}
\[
\sum_{\begin{array}{c}
0\leq z<\theta n\end{array}}\left(\frac{n^{z}}{z!}\right)^{2}\left(\sum_{\begin{array}{c}
2n_{1}+n_{3}+\cdots+n_{k}=n-z\\
n_{1}\leq n_{3}\leq\cdots\leq n_{k}
\end{array}}2^{4n_{1}}{n-z \choose 2n_{1},n_{3},...,n_{k}}^{2}m_{\sigma(1)}^{2n_{1}}\cdots m_{\sigma(k)}^{2n_{k}}\right).
\]
}The expression inside the (big) parentheses is $\leq${\scriptsize{}
\[
m_{\sigma(2)}^{2\theta n}\cdot\sum_{\begin{array}{c}
2n_{1}+n_{3}+\cdots+n_{k}=n-z\\
n_{1}\leq n_{3}\leq\cdots\leq n_{k}
\end{array}}2^{4n_{1}}{n-z \choose 2n_{1},n_{3},...,n_{k}}^{2}\left(\sqrt{m_{\sigma(1)}m_{\sigma(2)}}\right)^{^{4n_{1}}}m_{\sigma(3)}^{2n_{3}}\cdots m_{\sigma(k)}^{2n_{k}}.
\]
}If we change the names of the indexes by $\tilde{n}_{1}\leftarrow2n_{1},\tilde{n}_{2}\leftarrow n_{3},...,\tilde{n}_{k-1}\leftarrow n_{k}$
and write $\tilde{m}_{1}=2\cdot\sqrt{m_{\sigma(1)}m_{\sigma(2)}},\tilde{m}_{2}=m_{\sigma(3)},...,\tilde{m}_{k-1}=m_{\sigma(k)}$
we get
\[
\leq m_{\sigma(2)}^{2\theta n}\cdot\sum_{\tilde{n}_{1}+\cdots+\tilde{n}_{k-1}=n-z}{n-z \choose \tilde{n}_{1},...,\tilde{n}_{k-1}}^{2}\tilde{m}_{1}^{2\tilde{n}_{1}}\cdots\tilde{m}_{k-1}^{2\tilde{n}_{k-1}}
\]
Hence, using a similar argument to that in the proof of \corref{codim-asymp},
we have for some constant $C>0$:
\[
\leq Cm_{\sigma(2)}^{2\theta n}\left(\tilde{m}_{1}+\cdots+\tilde{m}_{k-1}\right)^{2n}.
\]
All in all, $\mathcal{S}_{2}\leq Cn^{\epsilon}\left(\left(\frac{m_{\sigma(2)}e}{\theta}\right)^{2\theta}\right)^{n}\left(\tilde{m}_{1}+\cdots+\tilde{m}_{k-1}\right)^{2n}$
(we used here Stirling's approximation). 

Since $\left(\tilde{m}_{1}+\cdots+\tilde{m}_{k-1}\right)^{2}<m^{2}$
(this is due to $2\sqrt{xy}<x+y$ when $x\neq y$) and $\underset{\theta\rightarrow0^{+}}{\lim}\left(\frac{m_{\sigma(2)}e}{\theta}\right)^{2\theta}=1$,
we can take small enough $\theta$ to obtain: 
\[
Cn^{\epsilon}\left(\left(\frac{m_{\sigma(2)}e}{\theta}\right)^{2\theta}\right)^{n}\left(\tilde{m}_{1}+\cdots+\tilde{m}_{k-1}\right)^{2n}<m^{2n},
\]
yielding $\exp(\mathcal{S}_{2})<m^{2}$ and $\exp(S(\sigma))<m^{2}$.
\end{proof}

\subsection{The final step}

We are ready to show that $COSI_{n}$ embeds inside $\Tr\left(C_{n}\right)$. 
\begin{lem}
If $\mathbf{g}=(g_{1},...,g_{n})$ is complete and in-order then $g\mathbf{g}$
is not in-order for every non trivial $g\in[H_{B}:H_{\mathfrak{g}}]$
(see \defref{sub_groups_of_G}).\end{lem}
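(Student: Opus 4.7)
The plan is to prove the contrapositive: assuming $g\mathbf{g}$ is in-order (for some $g \in H_B$), I will deduce $g \in H_{\mathfrak{g}}$. The starting point is the elementary identity $n_h(g\mathbf{g}) = n_{g^{-1}h}(\mathbf{g})$ for every $h \in B$, which holds because $h$ appears in position $l$ of $g\mathbf{g} = (gg_1,\ldots,gg_n)$ exactly when $g_l = g^{-1}h$; the hypothesis $g \in H_B$ guarantees $g^{-1}h \in B$, so the right-hand side is well defined.

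The next step is to recast the in-order condition as a sorting statement. Set $L_i = |B_1|+\cdots+|B_i|$. Since the inequalities imposed by ``in-order'' between elements of different blocks are \emph{strict}, a vector $\mathbf{g}\in B^n$ is in-order if and only if, for every $i\in\{1,\ldots,s\}$, the set $B_1\cup\cdots\cup B_i$ is exactly the set of indices $h\in B$ attaining the $L_i$ smallest values of $n_h(\mathbf{g})$. The strictness of $\max_{h\in B_j}n_h(\mathbf{g})<\min_{h\in B_{j+1}}n_h(\mathbf{g})$ is what prevents cross-block ties and makes this characterization unambiguous as an equality of subsets of $B$.

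Apply this reformulation to $g\mathbf{g}$: the assumption that $g\mathbf{g}$ is in-order says $B_1\cup\cdots\cup B_i$ is the set of $h$'s realizing the $L_i$ smallest values of $n_h(g\mathbf{g}) = n_{g^{-1}h}(\mathbf{g})$. Relabeling via $h' = g^{-1}h$ rewrites this as: $g^{-1}(B_1\cup\cdots\cup B_i)$ is the set of $h'$'s realizing the $L_i$ smallest values of $n_{h'}(\mathbf{g})$. But by the reformulation applied to $\mathbf{g}$, that set is $B_1\cup\cdots\cup B_i$. Consequently $g^{-1}(B_1\cup\cdots\cup B_i) = B_1\cup\cdots\cup B_i$ for every $i$, and taking successive set differences gives $g^{-1}B_i = B_i$ for all $i$. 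This means $m_{g^{-1}h} = m_h$ for every $h \in B$, so $g^{-1}\in H_{\mathfrak{g}}$ and hence $g\in H_{\mathfrak{g}}$, contradicting the non-triviality of the coset representative.

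The substantive content of the argument is really just the observation that ``in-order'' uniquely determines the partition of $B$ by sorting the values $n_h(\mathbf{g})$; the rest is bookkeeping. The only point requiring care is making the sorting characterization rigorous despite possible ties \emph{within} blocks, which is handled by the strict inequalities between blocks. Completeness of $\mathbf{g}$ plays essentially no role in this particular lemma (it merely ensures $n_h(\mathbf{g})>0$ for all $h$), so I do not foresee any genuine obstacle.
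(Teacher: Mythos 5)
Your proof is correct, but it takes a genuinely different route from the paper's. The paper argues directly: starting from $g\notin H_{\mathfrak{g}}$ it picks the maximal block index $i$ for which $gB_{i}$ meets some lower block $B_{j}$, shows by a counting/maximality argument that $B_{i}$ must then also meet $gB_{r}$ for some $r<i$, and exhibits an explicit pair $x\in gB_{i}\cap B_{j}$, $y\in B_{i}\cap gB_{r}$ witnessing the failure of the in-order condition for $g\mathbf{g}$. You instead prove the contrapositive through a structural observation: the in-order condition, because of the strict inequalities between blocks, determines the filtration $B_{1}\subseteq B_{1}\cup B_{2}\subseteq\cdots$ uniquely as the sets of indices carrying the $L_{i}$ smallest counts (your uniqueness claim for a set $S$ of size $L_{i}$ with $\max_{S}n_{h}<\min_{B\setminus S}n_{h}$ does hold), so if both $\mathbf{g}$ and $g\mathbf{g}$ are in-order then $g^{-1}$ must preserve each $B_{i}$ setwise, i.e.\ $g\in H_{\mathfrak{g}}$. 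Your argument is cleaner and avoids the paper's somewhat delicate choice of maximal index; it also makes transparent a point the paper leaves implicit, namely that completeness of $\mathbf{g}$ is not actually needed for this lemma (it is needed later, in the corollary, to identify $H_{B}$ with $H_{\tilde{\mathbf{h}}}$ when applying the trace lemma). The only thing to tighten in a final write-up is to state the sorting characterization as the existence and uniqueness of a size-$L_{i}$ set $S$ with $\max_{h\in S}n_{h}(\mathbf{g})<\min_{h\notin S}n_{h}(\mathbf{g})$, so that the phrase ``the set attaining the $L_{i}$ smallest values'' is unambiguous.
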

\begin{proof}
Let $g$ be non-trivial in $\left[H_{B}:H_{\mathfrak{g}}\right]$.
Since $g\notin H_{\mathfrak{g}}$ and $\mathbf{g}$ is complete, there
are $g_{i'}\in B_{i}$ and $g_{j'}\in B_{j}$, such that $gg_{i'}=g_{j'}$
but $m_{g_{i'}}\neq m_{g_{j'}}$, or put differently, $gB_{i}\cap B_{j}\neq\emptyset$.
Suppose without loss of generality that $m_{i}>m_{j}$ (so, $i>j$).
Let $i$ be the maximal index we can find having the above property.
Namely, the maximal $i$ for which there is an index $j<i$ such that
$gB_{i}\cap B_{j}\neq\emptyset$. 

We claim that there is an index $r<i$ for which $B_{i}\cap gB_{r}\neq\emptyset$.
Indeed, since $B_{i}\cap(gB=B)\neq\emptyset$ and $B=\cup B_{l}$,
there must be an $r$ such that $B_{i}\cap gB_{r}\neq\emptyset$.
However, due to the maximality of $i$, $r\leq i$. Furthermore, $gB_{i}\cap B_{j}\neq\emptyset$
yields $|gB_{i}\cap B_{i}|<|B_{i}|$. So, $r<i$ as required.

Finally, take $x\in gB_{i}\cap B_{j}$ and $y\in B_{i}\cap gB_{r}$.
Then, $n_{g^{-1}y}(g\mathbf{g})=n_{y}(\mathbf{g})>n_{x}(\mathbf{g})=n_{g^{-1}x}(g\mathbf{g})$
(since $x\in B_{j}$, $y\in B_{i}$ and $j<i$) . However, $g^{-1}y\in B_{r},$
$g^{-1}x\in B_{i}$ and $r<i$ so $g\mathbf{g}$ is not in-order.\end{proof}
\begin{cor}
$COSI_{n}$ is embedded in $\Tr(C_{n}^{G})$.\end{cor}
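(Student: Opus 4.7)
The strategy is to reverse-engineer Lemma \ref{lem:TR-in-SI}: for each basis element $T_{\tau,\mathbf{g}}$ of $COSI_n$, I will build an explicit monomial $Z_{\tau,\mathbf{g}} \in C_n^G$ whose trace, pushed through $\pi$, equals $T_{\tau,\mathbf{g}}$ plus ``correction'' terms lying entirely outside $COSI_n$. A triangularity argument will then upgrade this to an injective linear map $COSI_n \hookrightarrow \Tr(C_n^G)$.

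Fix such a basis element: $\tau$ is an $n$-cycle and $\mathbf{g}=(g_1,\ldots,g_n)\in B^n$ is complete and in-order. Write $\tau=(\sigma(1)\,\sigma(2)\,\cdots\,\sigma(n))$ for some $\sigma\in S_n$, and define
\[
h_{\sigma(i)}\;=\;g_{\sigma(i)}^{-1}\,g_{\sigma(i+1)}\qquad(i=1,\ldots,n,\text{ indices mod }n).
\]
Each $h_{\sigma(i)}$ lies in $B^{-1}B$ and the product $h_{\sigma(1)}\cdots h_{\sigma(n)}=e$, so the monomial
\[
Z_{\tau,\mathbf{g}}\;=\;U_{h_{\sigma(1)}}^{(\sigma(1))}\cdots U_{h_{\sigma(n)}}^{(\sigma(n))}
\]
lies in $C_n^G$ and its trace is nonzero. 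Running Lemma~\ref{lem:TR-in-SI} in this direction recovers exactly $\tilde{\mathbf{h}}=\mathbf{g}$, and since $\mathbf{g}$ is complete one has $H_{\tilde{\mathbf{h}}}=H_B$; consequently
\[
\pi\bigl(tr(Z_{\tau,\mathbf{g}})\bigr)\;=\;\sum_{t\in[H_B:H_{\mathfrak{g}}]}T_{\tau,t\mathbf{g}}\;=\;T_{\tau,\mathbf{g}}\;+\sum_{\substack{t\in[H_B:H_{\mathfrak{g}}]\\ t\ne e}}T_{\tau,t\mathbf{g}}.
\]
The preceding lemma applies to each non-trivial $t$: the vector $t\mathbf{g}$ is still complete (since $tB=B$) but no longer in-order. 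Hence every correction term $T_{\tau,t\mathbf{g}}$ ($t\ne e$) sits in $CSI_n\setminus COSI_n$.

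Decompose $CSI_n=COSI_n\oplus Y$, where $Y=\sp\{T_{\sigma,\mathbf{g}'}\in CSI_n : \mathbf{g}'\text{ complete, not in-order}\}$, and let $\pi_{COSI}:CSI_n\to COSI_n$ be the corresponding projection. Define
\[
\phi:COSI_n\longrightarrow \Tr(C_n^G),\qquad \phi(T_{\tau,\mathbf{g}})=tr(Z_{\tau,\mathbf{g}}),
\]
extended linearly on the chosen basis. If $\sum c_{\tau,\mathbf{g}}\,tr(Z_{\tau,\mathbf{g}})=0$ in $\Tr(C_n^G)$, applying $\pi$ and then $\pi_{COSI}$ kills every correction term and leaves $\sum c_{\tau,\mathbf{g}}T_{\tau,\mathbf{g}}=0$; linear independence of the $T_{\tau,\mathbf{g}}$ in $COSI_n$ forces all $c_{\tau,\mathbf{g}}=0$, giving injectivity. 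This produces the claimed embedding $COSI_n\hookrightarrow \Tr(C_n^G)$, completing diagram~\eqref{eq:diagram}.

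The only nontrivial step is the explicit reconstruction of $Z_{\tau,\mathbf{g}}$ from $(\tau,\mathbf{g})$, which amounts to inverting the recursion $\tilde h_{\sigma(i)}=h_{\sigma(1)}\cdots h_{\sigma(i-1)}$ from Lemma~\ref{lem:TR-in-SI}; this inversion is automatic here because $\tau$ is a single $n$-cycle and the successive ratios $g_{\sigma(i)}^{-1}g_{\sigma(i+1)}$ are well defined in $G$. After that, the rest is a triangularity/projection argument with no further obstacle, so I expect no serious difficulty beyond bookkeeping.
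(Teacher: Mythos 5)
Your proof is correct and follows essentially the same route as the paper: the paper defines $\psi(T_{\tau,\mathbf{g}})=\pi^{-1}\bigl(\sum_{h\in[H_{B}:H_{\mathfrak{g}}]}T_{\tau,h\mathbf{g}}\bigr)$, invokes Lemma \ref{lem:TR-in-SI} to see this lies in $\Tr(C_{n}^{G})$ (you merely make the preimage monomial $Z_{\tau,\mathbf{g}}$ explicit by inverting the recursion), and then gets injectivity from the preceding lemma exactly as you do, by separating the in-order basis vectors from the not-in-order correction terms. One harmless inaccuracy: the recursion of Lemma \ref{lem:TR-in-SI} actually returns $\tilde{\mathbf{h}}=g_{\sigma(1)}^{-1}\mathbf{g}$ rather than $\mathbf{g}$ itself, but since $g_{\sigma(1)}\in H_{\tilde{\mathbf{h}}}$ and the resulting sum over $[H_{\tilde{\mathbf{h}}}:H_{\mathfrak{g}}]$ is unchanged under such a translate, your identity $\pi(tr(Z_{\tau,\mathbf{g}}))=\sum_{t}T_{\tau,t\mathbf{g}}$ still holds.
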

\begin{proof}
We define the $F$-vector spaces mapping $\psi:COSI_{n}\rightarrow\Tr(C_{n}^{G})$
by 
\[
\psi(T_{\sigma,\mathbf{g}})=\pi^{-1}\left(\sum_{h\in[H_{B}:H_{\mathfrak{g}}]}T{}_{\sigma,h\mathbf{g}}\right).
\]
Note that since $\mathbf{g}$ is complete $H_{B}=H_{\mathbf{g}}$.
So, by \lemref{TR-in-SI}, $\psi(T_{\sigma,\mathbf{g}})$ is indeed
in $Tr(C_{n}^{G})$. 

If $\psi\left(\sum_{i}a_{i}T_{\sigma_{i},\mathbf{g_{\mathbf{i}}}}\right)=0$,
then 
\[
\sum_{i}a_{i}T{}_{\sigma_{i},\mathbf{g}_{i}}+\sum_{i}\sum_{e\neq h\in[H_{B}:H_{\mathfrak{g}}]}a_{i}T{}_{\sigma_{i},h\mathbf{g}_{i}}=0
\]
We know that all the vectors $h\mathbf{g_{\mathbf{i}}}$ are not in-order
while all the $\mathbf{g}_{i}$'s are in-order. Thus, the linear span
of $T{}_{\sigma_{i},\mathbf{g_{\mathbf{i}}}}$ is linearly independent
of the span of $\left\{ T{}_{\sigma_{i},h\mathbf{g_{\mathbf{i}}}}|e\neq h\in[H_{B}:H_{\mathfrak{g}}]\right\} $.
Therefore, $\sum_{i}a_{i}T{}_{\sigma_{i},\mathbf{g_{\mathbf{i}}}}=0$
as claimed.
\end{proof}
We proved that $\dim_{F}COSI_{n}\sim\dim_{F}(I_{n})$ and showed that
all the maps in (\ref{eq:diagram}) are indeed embeddings, so 
\[
\xymatrix{COSI_{n}\ar@{^{(}->}[r] & \Tr(C_{n}^{G})\ar@{^{(}->}[r] & I_{n}}
.
\]
Hence 
\[
c_{n}^{G}(A)=\dim_{F}\Tr(C_{n+1}^{G})\sim\dim_{F}I_{n+1}=t_{n+1}(A).
\]
Therefore, by \corref{We-have-where} we obtain the main result of
this article:
\begin{thm*}[\textbf{A}]
\label{thm:A} Let $G$ be a finite group, and $F$ be a field of
characteristic zero. Suppose $A$ is the $F$-algebra of $m\times m$
matrices with elementary $G$-grading defined by the grading vector
$\mathfrak{g}=(\gamma_{1}^{m_{1}},...,\gamma_{k}^{m_{k}})$, where
all the $\gamma_{i}$'s are distinct. Then 
\[
c_{n}^{G}(A)\sim\alpha n^{\frac{1-\left(\sum_{i=1}^{k}m_{i}^{2}\right)}{2}}m^{2n}=\alpha n^{\frac{1-\dim_{F}A_{e}}{2}}(\dim_{F}A)^{n}
\]
where
\[
\alpha=\frac{1}{|H_{\mathfrak{g}}|}m^{\frac{\sum_{i=1}^{k}m_{i}^{2}}{2}+2}\left(\frac{1}{\sqrt{2\pi}}\right)^{m-1}\left(\frac{1}{2}\right)^{\frac{\sum_{i=1}^{k}m_{i}^{2}-1}{2}}\prod_{i=1}^{k}\left(1!2!\cdots(m_{i}-1)!m_{i}^{-\frac{1}{2}}\right).
\]

\end{thm*}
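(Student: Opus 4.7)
The theorem at this point is essentially a corollary of all the infrastructure developed in sections \ref{sec:Invariants}--\ref{sec:SI=00003DC}, so my plan is simply to chain together the asymptotic equivalences already established and then read off the constant from \corref{codim-asymp}. No new ideas are needed; the hard work has been done.

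First I would record the three asymptotic equivalences proved (or extended) in \secref{Hilbert-series} and \secref{SI=00003DC}:
\[
\dim_F I_n \;\sim\; \dim_F SI_n \;\sim\; \dim_F CSI_n \;\sim\; \dim_F COSI_n,
\]
coming respectively from \thmref{.glory} together with the ``final step'' extension to disconnected $\widetilde{G}$, from \lemref{SI=00003DCSI}, and from the in-order approximation lemma. The common ingredient in each case is an $\exp$ estimate showing the discarded subspace has exponent strictly less than $m^2 = \exp(I_n)$, hence is negligible.

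Next I would invoke the embedding chain
\[
COSI_n \;\hookrightarrow\; \Tr(C_n^G) \;\hookrightarrow\; I_n,
\]
where the left-hand embedding is the corollary just preceding the theorem (using that if $\mathbf{g}$ is complete and in-order then $g\mathbf{g}$ is not in-order for $g \in [H_B:H_{\mathfrak{g}}] \setminus \{e\}$, so the distinct cosets contribute linearly independent terms), and the right-hand embedding is \corref{TrinSI} (realizing traces of $G$-graded generic monomials in $SI_n$ via $\pi$ together with \lemref{TR-in-SI}). Sandwiching $\dim_F \Tr(C_n^G)$ between two sequences asymptotic to $\dim_F I_n$ yields
\[
c_{n-1}^G(A) \;=\; \dim_F \Tr(C_n^G) \;\sim\; \dim_F I_n \;=\; t_n^G(A).
\]

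Finally, shifting the index and applying \corref{codim-asymp} gives $c_n^G(A) \sim t_{n+1}^G(A) \sim \alpha_0 (n+1)^{(1-\dim_F A_e)/2} m^{2(n+1)}$, where $\alpha_0$ is the constant computed there. The only arithmetic left is to absorb the shift: since $(n+1)^\rho/n^\rho \to 1$, the extra factor is $m^2 = \dim_F A$, which multiplies $\alpha_0$ to produce precisely the constant $\alpha$ appearing in the theorem statement (the exponent of $m$ in $\alpha$ is $\tfrac{\sum m_i^2}{2}+2$, which is exactly two more than the exponent $\tfrac{\sum m_i^2}{2}$ in $\alpha_0$). The main conceptual obstacle to the whole program was building the sequence of reductions $I_n \leadsto SI_n \leadsto CSI_n \leadsto COSI_n \leadsto \Tr(C_n^G)$ controlled by Schur-function Hilbert series and multinomial exponent estimates -- everything in the present step is bookkeeping.
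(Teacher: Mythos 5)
Your proposal is correct and follows essentially the same route as the paper: the chain of dominances $I_n\sim SI_n\sim CSI_n\sim COSI_n$, the sandwich $COSI_n\hookrightarrow \Tr(C_n^G)\hookrightarrow I_n$ giving $c_{n-1}^G(A)\sim t_n^G(A)$, and the index shift contributing the extra factor $m^2$ (hence the exponent $\frac{\sum m_i^2}{2}+2$) when passing from the constant of the corollary on $t_n^G(A)$ to the constant $\alpha$ of the theorem. Nothing is missing.
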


\section{\label{sec:Fine}Fine grading case}

Recall from the introduction that the asymptotics of the codimension
sequence of any affine $G$-graded algebra has the form $\alpha n^{\beta}d^{n}$.
In the previous sections we were successful in finding the constants
$\alpha,\beta$ and $d$ for the case of matrix algebra with elementary
grading. We intend to push forward this result. Namely, to calculate
the constants $\beta$ and $d$ for $G$-simple finite dimensional
algebras (recall that elementary grading is a $G$-simple grading).

As previously noted (see \thmref{Let--be}), the $G$-simple algebras
are composed of two parts: the elementary part, which we discussed
thoroughly and the fine part. Although the asymptotic behavior of
the codimension sequence of fine graded algebras is known (see \cite{key-A+K}),
we calculate it here using invariant theory methods for two reasons.
Firstly, we want to demonstrate the strength of the method introduced
in this paper. Secondly, in order to deal with the general case of
$G$-simple finite dimensional algebras we will be forced to rely
on some details of this specific proof. 

Let $H$ be a finite group and $A=F^{\mu}H=\oplus_{h\in H}Fb_{h}$
where $\mu\in H^{2}(H,F^{\times})$ (here $F^{\times}$ is considered
a trivial $H$-module) and $b_{h}$ is a formal element corresponding
to $h\in H$. Once again, we are interested in the group of all $H$-graded
automorphisms $\widetilde{H}$ and the multilinear invariants of $F[A^{\times n}]$
under its induced action (on $F[A^{\times n}]$). 
\begin{lem}
The following hold:
\begin{enumerate}
\item The linear character group of $H$ is isomorphic to $\widetilde{H}$.
More precisely, for a linear character $\chi$, the corresponding
automorphism is given by $f_{\chi}(b_{h})=\chi(h)b_{h}$.
\item The space $I_{n}$ of multilinear elements in $F[A^{\times n}]^{\widetilde{H}}$
is spanned by $\varepsilon_{1,h_{1}}\cdots\varepsilon_{n,h_{n}}$,
where $h_{1}\cdots h_{n}\in H'$ (the commutator subgroup of $H$)
and $\varepsilon_{i,g}(b_{h}^{(k)})=\delta_{g,h}\delta_{i,k}$ ($b_{h}^{(k)}=(0,...,0,b_{h},0,...,0)$,
where $b_{h}$ is placed in the $k$'th component).
\item $\dim_{F}I_{n}=|H'|\cdot|H|^{n-1}$.
\end{enumerate}
\end{lem}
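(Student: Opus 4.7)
My plan is to address the three claims in sequence, with the main observation being that the fineness of the grading forces automorphisms to act diagonally, which reduces the whole invariant computation to character theory of $H$.

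For claim (1), I would argue that any $f\in\widetilde{H}$ must preserve the one-dimensional component $A_h=Fb_h$, so necessarily $f(b_h)=\chi(h)b_h$ for some $\chi(h)\in F^\times$ (nonzero by invertibility of $f$). Applying $f$ to the relation $b_gb_h=\mu(g,h)b_{gh}$ yields $\mu(g,h)\chi(gh)b_{gh}=\chi(g)\chi(h)\mu(g,h)b_{gh}$, which (since $\mu$ takes values in $F^\times$) is equivalent to $\chi(gh)=\chi(g)\chi(h)$. Thus $\chi$ is a linear character of $H$, and conversely every linear character clearly defines such an $f_\chi$. The correspondence $\chi\leftrightarrow f_\chi$ is evidently a group isomorphism.

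For claim (2), I would unwind the induced $\widetilde{H}$-action on the dual basis. Taking $\chi\in\widetilde{H}$, by definition
\[
(\chi\cdot\varepsilon_{i,h})(b^{(k)}_{g})=\varepsilon_{i,h}(f_{\chi}^{-1}(b^{(k)}_{g}))=\chi^{-1}(g)\delta_{g,h}\delta_{i,k}=\chi^{-1}(h)\,\varepsilon_{i,h}(b^{(k)}_{g}),
\]
so each $\varepsilon_{i,h}$ is a weight vector with character $\chi\mapsto\chi^{-1}(h)$. Consequently a multilinear monomial $\varepsilon_{1,h_{1}}\cdots\varepsilon_{n,h_{n}}$ has character $\chi\mapsto\chi^{-1}(h_{1}\cdots h_{n})$. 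Since these monomials form a basis for the space of multilinear polynomials and distinct characters give $F$-linearly independent isotypic components, a multilinear polynomial lies in $I_{n}$ if and only if every monomial appearing in it satisfies $\chi(h_{1}\cdots h_{n})=1$ for all linear characters $\chi$ of $H$. The intersection of kernels of all linear characters is exactly the commutator subgroup $H'$ (as $\widehat{H}=\widehat{H/H'}$ separates points of $H/H'$), giving the stated spanning set.

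For claim (3), the basis elements of $I_n$ are in bijection with tuples $(h_1,\ldots,h_n)\in H^n$ satisfying $h_1\cdots h_n\in H'$. Choosing $h_1,\ldots,h_{n-1}$ freely contributes $|H|^{n-1}$, and then $h_n$ is constrained to the coset $(h_1\cdots h_{n-1})^{-1}H'$, which has exactly $|H'|$ elements; multiplying gives $|H'|\cdot|H|^{n-1}$.

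The only mildly delicate point is the identification in (1)—one must observe that a graded automorphism of a fine-graded algebra is forced to act by a scalar on each component, which is the feature that makes the whole invariant ring abelian and reduces the problem to character sums. The rest is a direct character-orthogonality computation.
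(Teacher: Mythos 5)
Your proposal is correct and follows essentially the same route as the paper: identify $\widetilde{H}$ with the linear character group by observing that a graded automorphism acts by a scalar on each one-dimensional component (with multiplicativity of the scalar forced by the cocycle relation), then note that each monomial $\varepsilon_{1,h_{1}}\cdots\varepsilon_{n,h_{n}}$ is a weight vector for the character $\chi\mapsto\chi(h_{1}\cdots h_{n})^{\pm1}$, so invariance is equivalent to $h_{1}\cdots h_{n}\in\bigcap_{\chi}\ker\chi=H'$, and the count follows. Your write-up merely supplies details (the cocycle computation, the coset count) that the paper leaves implicit.
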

\begin{proof}
It is clear that $f_{\chi}\in\widetilde{H}$. For the other direction,
suppose $f\in\widetilde{H}$. Hence, $f(b_{h})=\psi(h)b_{h}$ where
$\psi(h)\in F$. The multiplicativity of $f$ implies the multiplicativity
of $\psi$. Since $f$ is invertible, $\mbox{Im}\psi\in F^{\times}$.
Hence, $\psi$ is a linear character of $H$ and $f=f_{\psi}$- proving
(1).

For part (2), notice that $f_{\chi}^{-1}\cdot\varepsilon_{i,h}=\chi(h)\varepsilon_{i,h}$.
So, 
\[
f_{\chi}^{-1}\cdot\left(\varepsilon_{1,h_{1}}\cdots\varepsilon_{n,h_{n}}\right)=\chi(h_{1}\cdots h_{n})\varepsilon_{1,h_{1}}\cdots\varepsilon_{n,h_{n}}.
\]
We are done since $\chi(h_{1}\cdots h_{n})=1$ for every character
$\chi$ if and only if $h_{1}\cdots h_{n}\in H'$. 

Part (3) is clear from part (2). \end{proof}
\begin{rem}
As a result of the previous lemma, we \uline{identify} $\widetilde{H}$
with the group $\Hom(H,F^{\times})$ of linear characters of $H$. 
\end{rem}
The $F$-form $tr(b_{h})=\delta_{h,e}$ on $A$ is well known to satisfy
the conditions of \lemref{trace}. Therefore, considering the variables
$x_{i,h}$ of $R_{n}^{G}(A)=$ the relatively free $H$-graded $F$-algebra
of $A$ generated by $\{x_{i,h}|i=1...n,\, h\in H\}$, as $\varepsilon_{i,h}b_{h}\in F[A^{\times n}]\otimes A$,
we conclude that $\phi:C_{n}^{H}(A)\rightarrow I_{n+1}$ is a linear
isomorphism, where 
\[
\phi(x_{i_{1},h_{1}}\cdots x_{i_{n},h_{n}})=tr(x_{i_{1},h_{1}}\cdots x_{i_{n},h_{n}}x_{n+1,(h_{1}\cdots h_{n})^{-1}})=c\varepsilon_{i_{1},h_{1}}\cdots\varepsilon_{i_{n},h_{n}}\varepsilon_{n+1,(h_{1}\cdots h_{n})^{-1}}
\]
for a suitable non zero $c\in F$. As a result, our goal is to prove
that 
\[
\dim_{F}I_{n+1}\sim\dim_{F}\Tr(C_{n}^{H}).
\]

Let $RI_{n}=F[A^{\times n}]^{\widetilde{H}}$ and $RSI_{n}=\{tr(f)|f\in R_{n}(A)\}\subseteq RI_{n}$.
Note that these spaces have a natural structure as $GL_{n}(F)$-modules.
Moreover, the multilinear part of $RI_{n}$ and $RSI_{n}$ is exactly
$I_{n}$ and $\Tr(C_{n}^{H})$ respectively. Similarly to \secref{Hilbert-series}
we calculate the Hilbert series of $RI_{n}$ and prove that almost
all the coefficients of the Hilbert series of $RI_{n}$ and $RSI_{n}$
are equal, with respect to the Schur functions. 
\begin{lem}
The Hilbert series of $RI_{n}$ is 
\[
H_{RI_{n}}(t_{1},...,t_{n})=\sum_{\begin{array}{c}
\lambda\\
\htt(\lambda)\leq|H|
\end{array}}\left\langle s_{\lambda}\left(\chi(h)|h\in H\right),1\right\rangle s_{\lambda}(t_{1},...,t_{n}),
\]
where $\left\langle \cdot,\cdot\right\rangle $ is the inner product
in the ring of class function relative to the group $\widetilde{H}$
and $g_{\lambda}(\chi)=s_{\lambda}\left(\chi(h)|h\in H\right):\widetilde{H}\to F$
is a (class) function of $\widetilde{H}$.\end{lem}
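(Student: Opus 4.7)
The plan is to mimic the computation of $H_{RI_n}$ carried out in Section \ref{sec:Hilbert-series} for the elementary case, now specialized to the fine grading $A=F^\mu H$. First, I want to compute the bivariate Hilbert series of $F[A^{\times n}]$ as a $\widetilde{H}\times GL_n(F)$-module, and then extract the $\widetilde{H}$-invariant part by pairing with the trivial character.

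The starting point is the observation that $A\otimes F^n$ has basis $\{b_h\otimes e_l\mid h\in H,\ 1\le l\le n\}$, and the action of $(f_\chi,Q)\in\widetilde{H}\times GL_n(F)$ on this basis is given by $f_\chi(b_h)\otimes Qe_l=\chi(h)b_h\otimes Qe_l$. Diagonalizing $Q$ with eigenvalues $t_1,\dots,t_n$, the eigenvalues of $(f_\chi,Q)$ on $A\otimes F^n$ are exactly $\chi(h)t_l$, $h\in H$, $1\le l\le n$. Since $F[A^{\times n}]$ is the symmetric algebra on the dual of $A\otimes F^n$, its Hilbert series is
\[
H_{F[A^{\times n}]}(\chi,t_1,\dots,t_n)=\prod_{h\in H}\prod_{l=1}^n\frac{1}{1-\chi(h)t_l}.
\]

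Next, I apply the Cauchy identity (as on page 33 of \cite{key-MacBook}) to rewrite this as
\[
\prod_{h,l}\frac{1}{1-\chi(h)t_l}=\sum_{\lambda}s_\lambda\bigl(\chi(h)\mid h\in H\bigr)\,s_\lambda(t_1,\dots,t_n).
\]
Since there are only $|H|$ many ``$y$-variables'' $\chi(h)$, the Schur polynomial $s_\lambda(\chi(h)\mid h\in H)$ vanishes identically as a function of $\chi$ whenever $\htt(\lambda)>|H|$, so the sum may be restricted to $\htt(\lambda)\le|H|$.

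Finally, $RI_n=F[A^{\times n}]^{\widetilde{H}}$ is obtained by taking $\widetilde H$-invariants, which on the level of the Hilbert series amounts to pairing each coefficient with the trivial character of $\widetilde{H}$ in the inner product on class functions. Since $\widetilde H$ acts trivially on the variables $t_1,\dots,t_n$, this gives
\[
H_{RI_n}(t_1,\dots,t_n)=\sum_{\substack{\lambda\\ \htt(\lambda)\le|H|}}\bigl\langle s_\lambda(\chi(h)\mid h\in H),\,1\bigr\rangle\,s_\lambda(t_1,\dots,t_n),
\]
which is the desired formula. The main subtlety is making the identification of $\widetilde{H}$-invariants with the projection onto the trivial character rigorous: one has to verify that the formal product expansion converges in the appropriate completed grading, and that the inner product $\langle\cdot,1\rangle_{\widetilde H}$ commutes with the infinite sum over $\lambda$; this is routine because each homogeneous piece of $F[A^{\times n}]$ is finite-dimensional, so only finitely many $\lambda$ contribute in any fixed degree.
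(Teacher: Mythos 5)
Your proposal is correct and follows essentially the same route as the paper: compute the $\widetilde{H}\times GL_n(F)$ character of $F[A^{\times n}]$ from the eigenvalues $\chi(h)t_l$, expand via the Cauchy identity into Schur functions, restrict to $\htt(\lambda)\le|H|$ by the vanishing of $s_\lambda$ in $|H|$ variables, and project onto the trivial $\widetilde{H}$-character to extract the invariants. The convergence remark at the end is the standard degree-by-degree justification and matches what the paper leaves implicit.
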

\begin{proof}
This is done essentially as in (\cite{key-Formanek} pages 201,202).
Here are the details: First we calculate the $GL_{n}(F)\times\widetilde{H}$
character of the module $F[A^{\times n}]$, which we denote by $H_{F[A^{\times n}]}(t_{1},...,t_{n},\mbox{\ensuremath{\chi}})$.
It is known that characters of $GL_{n}(F)$ are determined by the
diagonal matrices, so since 
\[
diag(t_{1},...,t_{n})\times\mbox{\ensuremath{\chi\cdot}}(\varepsilon_{i_{1},h_{1}}\cdots\varepsilon_{i_{k},h_{k}})=\left(\chi(h_{1})t_{i_{1}}\cdots\chi(h_{k})t_{i_{k}}\right)\varepsilon_{i_{1},h_{1}}\cdots\varepsilon_{i_{k},h_{k}}
\]
it follows that 
\[
H_{F[A^{\times n}]}(t_{1},...,t_{n},\chi)=\prod_{i=1}^{n}\prod_{h\in H}\sum_{j=0}^{\infty}\left(\chi(h)t_{i}\right)^{j}
\]
which is equal to 
\[
\prod_{i=1}^{n}\prod_{h\in H}\frac{1}{1-\chi(h)t_{i}}=\sum_{\lambda}s_{\lambda}\left(\chi(h)|h\in H\right)s_{\lambda}(t_{1},...,t_{n}).
\]
Thus,
\[
H_{RI_{n}}(t_{1},...,t_{n})=\sum_{\lambda}\left\langle s_{\lambda}\left(\chi(h)|h\in H\right),1\right\rangle s_{\lambda}(t_{1},...,t_{n}).
\]
We are done since according to the definition of Schur functions $s_{\lambda}\left(\chi(h)|h\in H\right)=0$
for every $\lambda$ such that $\htt(\lambda)>|H|$. 
\end{proof}
Denote $a_{\lambda}=\left\langle s_{\lambda}\left(\chi(h)|h\in H\right),1\right\rangle $.
As before, we require an element $\mathcal{I}\in RI_{n_{0}}$ such
that:
\begin{equation}
\mathcal{I}RI_{n}\subseteq RSI_{n}\subseteq RI_{n}\label{eq:alpha}
\end{equation}
and $F\mathcal{I}$ is a one dimensional $GL_{n_{0}}(F)$-module.
Since $a_{\lambda}=0$ for $\htt(\lambda)>|H|$, we need to establish
\eqref{alpha} only for some $n$ which is $\geq|H|$. Furthermore,
the same reasoning as in \lemref{mu-la} shows: 
\[
a_{l\mu+\lambda}=a_{\lambda}.
\]
Thus, as in \secref{Hilbert-series} (see \corref{m=00003Dm'} and
\thmref{.glory}), we will be able to conclude the desired asymptotic
relation: 
\[
c_{n}^{H}(A)\sim\dim_{F}I_{n+1}=|H'||H|^{n}.
\]

So we are left with finding $\mathcal{I}$.
\begin{lem}
For $\mathbf{h}=(h_{1},...,h_{k})\in H^{k}$ denote by $\Sigma_{\mathbf{h}}$
the set of all elements in $H$ of the form $h_{\sigma(1)}\cdots h_{\sigma(k)}$,
where $\sigma\in S_{k}$. Then:
\begin{enumerate}
\item $|\Sigma_{\mathbf{h}}|\leq|H'|$.
\item If $\mathbf{h}_{1}$ is a sub-sequence of $\mathbf{h}_{2}$, then
$|\Sigma_{\mathbf{h}_{1}}|\leq|\Sigma_{\mathbf{h}_{2}}|$. In particular,
if $|\Sigma_{\mathbf{h}_{1}}|=|H'|$ then $|\Sigma_{\mathbf{h}_{2}}|=|H'|$.
\item There is some $n_{0}$ and $\mathbf{h}_{0}\in H^{n_{0}}$ for which
$\Sigma_{\mathbf{h}_{0}}=H'$.
\end{enumerate}
\end{lem}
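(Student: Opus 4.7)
The plan is to exploit the abelianness of $H/H'$ for part (1), a fixed right-translation argument for part (2), and an explicit construction using commutator blocks for part (3), with parts (1) and (2) feeding into (3).

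For part (1), I would observe that the image of $h_{\sigma(1)}\cdots h_{\sigma(k)}$ in the abelian quotient $H/H'$ does not depend on $\sigma$. Consequently every element of $\Sigma_{\mathbf{h}}$ lies in the single coset $(h_1\cdots h_k)H'$, whose cardinality is $|H'|$, giving $|\Sigma_{\mathbf{h}}|\le |H'|$.

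For part (2), suppose $\mathbf{h}_1$ appears as a sub-multiset of $\mathbf{h}_2$ and let $g_1,\ldots,g_m$ enumerate, in some fixed order, the entries of $\mathbf{h}_2$ not used by $\mathbf{h}_1$. Any $g\in\Sigma_{\mathbf{h}_1}$ is obtained as $g=h_{\sigma(1)}\cdots h_{\sigma(k)}$ for a suitable permutation; extending this permutation by appending $g_1,\ldots,g_m$ at the end yields $g\cdot(g_1\cdots g_m)\in\Sigma_{\mathbf{h}_2}$. Since right-translation by the fixed element $g_1\cdots g_m$ is injective, this gives $|\Sigma_{\mathbf{h}_1}|\le|\Sigma_{\mathbf{h}_2}|$, and the ``in particular'' clause is immediate from part (1).

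Part (3) is the main point and the place where the most care is needed. The plan is to assemble $\mathbf{h}_0$ out of four-term ``commutator blocks.'' For any $a,b\in H$, the block $(a,b,a^{-1},b^{-1})$ satisfies $\{e,[a,b]\}\subseteq \Sigma_{(a,b,a^{-1},b^{-1})}$, because $aa^{-1}bb^{-1}=e$ and $aba^{-1}b^{-1}=[a,b]$. Enumerate $H'=\{c_1,\ldots,c_r\}$ and, using the fact that $H'$ is generated by commutators, fix a factorization $c_i=\prod_{j=1}^{s_i}[a_{i,j},b_{i,j}]$ for each $i$. Let $\mathbf{h}_{c_i}$ be the concatenation of the blocks $(a_{i,j},b_{i,j},a_{i,j}^{-1},b_{i,j}^{-1})$ over $j=1,\ldots,s_i$; choosing inside each block whether to output $e$ or the corresponding commutator shows $\{e,c_i\}\subseteq\Sigma_{\mathbf{h}_{c_i}}$. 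Finally, take $\mathbf{h}_0$ to be the concatenation of $\mathbf{h}_{c_1},\ldots,\mathbf{h}_{c_r}$. Permuting inside each $\mathbf{h}_{c_i}$ separately yields
\[
\Sigma_{\mathbf{h}_0}\;\supseteq\;\Sigma_{\mathbf{h}_{c_1}}\cdots\Sigma_{\mathbf{h}_{c_r}},
\]
and selecting $c_i$ from the $i$-th factor and $e$ from every other factor realises every $c_i$, so $H'\subseteq\Sigma_{\mathbf{h}_0}$. The reverse inclusion $\Sigma_{\mathbf{h}_0}\subseteq H'$ follows from part (1) together with the observation that the product of all entries of $\mathbf{h}_0$ lies in $H'$ by construction. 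The only obstacle worth flagging is verifying the block-wise inclusion $\Sigma_{\mathbf{h}_0}\supseteq\Sigma_{\mathbf{h}_{c_1}}\cdots\Sigma_{\mathbf{h}_{c_r}}$, but this is immediate once one notes that permutations acting separately on disjoint blocks are themselves permutations of the concatenated sequence.
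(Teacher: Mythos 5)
Your proof is correct and follows essentially the same route as the paper: part (1) via the single-$H'$-coset observation, part (2) by the obvious extension of permutations, and part (3) by concatenating commutator blocks to realise every element of $H'$ and then invoking part (1) for the reverse inclusion. The paper's own proof is only a terse sketch of exactly this argument, so your write-up simply supplies the details.
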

\begin{proof}
The first part holds since the elements of $\Sigma_{\mathbf{h}}$
are all contained in the same $H'$-coset. The second part is trivial.
The third part follows easily from the fact that any element in $H'$
can be written as a product of commutators and from part 1. 
\end{proof}
Denote by $n$ the maximum between $|H|$ and $n_{0}$. 
\begin{lem}
Let $h_{1},...,h_{n_{0}}$ be the elements of $\mathbf{h}_{0}$ and
$h_{n_{0}+1}=\cdots=h_{n}=e$ . The element
\[
\mathcal{I}=\det\left(\begin{array}{ccc}
\varepsilon_{1,h_{1}} & \cdots & \varepsilon_{1,h_{n}}\\
\vdots & \ddots & \vdots\\
\varepsilon_{n,h_{1}} & \cdots & \varepsilon_{n,h_{n}}
\end{array}\right)
\]
satisfies \eqref{alpha}.\end{lem}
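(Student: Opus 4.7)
The plan is to verify the two properties $\mathcal{I}$ should satisfy: that $F\mathcal{I}$ is a one-dimensional $GL_n(F)$-module and that $\mathcal{I}\cdot RI_N\subseteq RSI_N$ for every $N\geq n$. The first is a direct consequence of the determinant structure: under $P=(p_{ik})\in GL_n(F)$ acting on the copy indices of $F[A^{\times n}]$, the matrix $M=(\varepsilon_{i,h_j})_{i,j}$ is replaced by $PM$, so $\mathcal{I}=\det(M)$ transforms by $\det(P)\cdot\mathcal{I}$, identifying $F\mathcal{I}$ with the determinant character of $GL_n(F)$, whose Schur function is $s_{(1^n)}$.

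Before tackling the main containment I need to see that $\mathcal{I}\in I_n$. Expanding,
\[
\mathcal{I}=\sum_{\sigma\in S_n}\mathrm{sgn}(\sigma)\prod_{j=1}^{n}\varepsilon_{\sigma(j),h_j},
\]
and under $\chi\in\widetilde{H}$ every summand is scaled by the common factor $\chi(h_1\cdots h_n)$, since linear characters factor through the abelianization and the order of the product is immaterial. Because $h_1\cdots h_n\in\Sigma_{\mathbf{h}_0}\subseteq H'$, this scalar equals $1$, confirming that $\mathcal{I}$ (and each summand) is $\widetilde{H}$-invariant.

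For the containment I would imitate the reduction performed in the matrix case: convert to a multilinear situation by a disjoint-variable shift, then invoke a direct multilinear trace identification available in the fine grading. Given $f\in RI_N$, decompose $f=\xi(g)$ with $g\in I_M$ multilinear and $\xi\in\mathfrak{L}_{M,N}$, and shift $g$ onto the copies $\{n+1,\ldots,n+M\}$ via $\tau(k)=n+k$, obtaining a multilinear $\widetilde{H}$-invariant $g'$ supported on those copies. Because $\mathcal{I}$ uses only copies $1,\ldots,n$ and $g'$ uses only copies $n+1,\ldots,n+M$, the product $\mathcal{I}\cdot g'$ is multilinear in all $n+M$ copies and $\widetilde{H}$-invariant, hence lies in $I_{n+M}$. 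The linear isomorphism $\phi:C_{k-1}^{H}(A)\to I_k$ established earlier (given by trace against an appropriately chosen extra variable), together with the obvious inclusion $\Tr(C_k^{H})\subseteq I_k$, forces $I_k=\Tr(C_k^{H})$ for every $k$. Applied at $k=n+M$, this yields $\mathcal{I}\cdot g'=tr(y)$ for some $y\in R_{n+M}^{H}(A)$.

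To conclude, define $\xi':\{1,\ldots,n+M\}\to\{1,\ldots,N\}$ by the identity on $\{1,\ldots,n\}$ and $\xi'(n+k)=\xi(k)$. Then $\xi'(\mathcal{I}\cdot g')=\mathcal{I}\cdot f$, and since $\xi'$ acts only on the polynomial-ring factor it commutes with the trace, so $\mathcal{I}\cdot f=tr(\xi'(y))\in RSI_N$. The main obstacle is establishing the multilinear identity $I_k=\Tr(C_k^{H})$; once $\phi$ has been checked to be a linear isomorphism, the rest is a formal manipulation of shifts and of the multiplicative structure of $F[A^{\times\bullet}]$.
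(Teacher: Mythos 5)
Your handling of the two side conditions is fine: the determinant argument identifies $F\mathcal{I}$ with the one-dimensional alternating $GL_{n}(F)$-module, and $h_{1}\cdots h_{n}\in H'$ gives $\widetilde{H}$-invariance. The containment $\mathcal{I}\cdot RI_{N}\subseteq RSI_{N}$, however, is derived from the claim that $I_{k}=\Tr(C_{k}^{H})$ for every $k$, and that claim is false. The map $\phi$ is injective but \emph{not} onto $I_{k}$: its image is $\Tr(C_{k}^{H})$, which is spanned by the monomials $\varepsilon_{1,w_{1}}\cdots\varepsilon_{k-1,w_{k-1}}\varepsilon_{k,u}$ with $u\in\Sigma_{(w_{1},\dots,w_{k-1})}^{-1}$, whereas $I_{k}$ allows every $u$ in the full coset $(w_{1}\cdots w_{k-1})^{-1}H'$, and $\Sigma_{\mathbf{w}}$ is in general a proper subset of its $H'$-coset. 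Concretely, $\dim_{F}I_{1}=|H'|$ while $\Tr(C_{1}^{H})=F\varepsilon_{1,e}$ is one-dimensional, so the spaces differ whenever $H$ is non-abelian. Worse, the asserted equality is an exact form of what the entire section is trying to establish only asymptotically: if $I_{k}=\Tr(C_{k}^{H})$ held for all $k$ then $RI_{N}=RSI_{N}$ outright and $\mathcal{I}$ would be superfluous. So this step is both circular and based on a false premise; the paper's phrase ``linear isomorphism'' must be read as ``isomorphism onto its image,'' which is why the stated goal immediately afterwards is an asymptotic comparison of dimensions rather than an equality.

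The missing idea is that the containment has to use the defining property of $\mathbf{h}_{0}$, namely $\Sigma_{\mathbf{h}_{0}}=H'$, which your argument never invokes (as written it would apply verbatim to any multilinear invariant supported on the first $n$ copies, and for such elements the conclusion fails). Since the $\varepsilon$'s commute, no multilinearization or index shifting is needed either. Expand $\mathcal{I}$ as the alternating sum of the monomials $Z_{\sigma}=\varepsilon_{\sigma(1),h_{1}}\cdots\varepsilon_{\sigma(n),h_{n}}$ and take any monomial $T=\varepsilon_{i_{1},w_{1}}\cdots\varepsilon_{i_{k},w_{k}}\in RI_{N}$, so that $w_{1}\cdots w_{k}\in H'$. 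The concatenated tuple $(w_{1},\dots,w_{k},h_{1},\dots,h_{n})$ has total product in $H'$ and contains $\mathbf{h}_{0}$ as a subsequence, so by parts (2) and (3) of the preceding lemma its set $\Sigma$ is all of $H'$; in particular some reordering of these $k+n$ group elements multiplies to $e$. Written in that order, $Z_{\sigma}T$ is precisely $tr$ of the corresponding monomial in the generic elements $x_{i,h}$, hence lies in $RSI_{N}$.
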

\begin{proof}
Notice that $\mathcal{I}$ is an alternating sum of the monomials
$Z_{\sigma}=\varepsilon_{\sigma(1),h_{1}}\cdots\varepsilon_{\sigma(n),h_{n}}$
(here $\sigma\in S_{n}$). Without loss of generality, it is sufficient
to show that for $Z=Z_{\sigma}$ and any monomial $T=\varepsilon_{i_{1},w_{1}}\cdots\varepsilon_{i_{k},w_{k}}\in RI_{n}$,
where $w_{1},...,w_{k}\in H$, $ZT\in RSI_{n}$. Indeed, since $w_{1}\cdots w_{k}\in H'$,
it follows from the previous Lemma (part 2) that it is possible to
permute $(w_{1},...,w_{k+n})=(w_{1},...,w_{k},h_{1},...,h_{n})$,
say by $\nu\in S_{k+n}$, so that $w_{\nu(1)}\cdots w_{\nu(k+n)}=e$.
Hence, 
\[
ZT=\varepsilon_{i_{\nu(1)},w_{\nu(1)}}\cdots\varepsilon_{t_{\nu(k+n)},w_{\nu(k+n)}}=tr\left(x_{i_{\nu(1)},w_{\nu(1)}}\cdots x_{t_{\nu(k+n)},w_{\nu(k+n)}}\right).
\]
Finally, it is trivial that $F\mathcal{I}$ is a $GL_{n_{0}}(F)$-module.
\end{proof}
Let us collect the main result of this section.
\begin{thm}
Let $H$ be a group, and $F$ be a field of characteristic zero. Suppose
$A=F^{\mu}H$, where $\mu\in H^{2}(H,F^{\times})$ (here $F^{\times}$
is considered a trivial $H$ module). Then 
\[
c_{n}^{H}(A)\sim|H'||H|^{n}.
\]

\end{thm}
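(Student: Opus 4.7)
The theorem follows by adapting the three-step machinery of \thmref{.glory} to the fine setting. First, the injection $\phi:C_n^H(A)\to I_{n+1}$ constructed above identifies $c_n^H(A)$ with $\dim_F\Tr(C_n^H)$, so combined with $\dim_F I_{n+1}=|H'|\cdot|H|^n$ from the opening lemma of this section, the theorem reduces to showing
\[
\dim_F\Tr(C_n^H)\sim\dim_F I_{n+1}.
\]

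For this asymptotic, I would translate the inclusions $\mathcal{I}\cdot RI_n\subseteq RSI_n\subseteq RI_n$ into Schur-coefficient inequalities. Writing $H_{RI_n}=\sum_\lambda a_\lambda s_\lambda$, $H_{RSI_n}=\sum_\lambda a'_\lambda s_\lambda$ and $H_{\mathcal{I}\cdot RI_n}=\sum_\lambda \bar a_\lambda s_\lambda$, the one-dimensionality of $F\mathcal{I}$ as a $GL_n(F)$-module (with Schur function $s_{l\mu}$, where $\mu=(1^{|H|})$ and $l$ is chosen so that $(\prod_{h\in H}h)^l\in H'$) forces $\bar a_{\lambda+l\mu}=a_\lambda$. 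Together with the shift identity $a_{\lambda+l\mu}=a_\lambda$ --- the fine analog of \lemref{mu-la}(2), which holds by $s_{l\mu}(\chi(h)\mid h\in H)=\chi\bigl((\prod_{h\in H}h)^l\bigr)=1$ for every $\chi\in\widetilde{H}$ --- and the sandwich $\bar a_\nu\leq a'_\nu\leq a_\nu$, we conclude $a'_\nu=a_\nu$ for every partition $\nu$ with $\nu_{|H|}\geq l$, exactly as in \corref{m=00003Dm'}.

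Next I pass to $S_{n+1}$-characters via the isometry $\rho$ of Section \ref{sec:-intro-rep}. The height bound $\htt(\lambda)\leq|H|$ coming from the fine analog of \lemref{mu-la}(1) yields
\[
\dim_F I_{n+1}=\sum_{\substack{\lambda\vdash n+1\\\htt(\lambda)\leq|H|}}a_\lambda d_\lambda,\qquad \dim_F\Tr(C_n^H)=\sum_{\substack{\lambda\vdash n+1\\\htt(\lambda)\leq|H|}}a'_\lambda d_\lambda,
\]
so the discrepancy is supported on partitions with $\lambda_{|H|}<l$. A polynomial bound on $\sum_{\lambda\vdash n+1}a_\lambda$, obtained from $RI_{|H|}^{(n+1)}\subseteq F[A^{\times|H|}]^{(n+1)}$ (the fine analog of \lemref{m-poly}), together with the estimate $d_\lambda\leq n^{c}(|H|-1)^n$ for such $\lambda$ from \cite{key-10}, Theorem 5.10.3 --- applied exactly as in the proof of \thmref{.glory} --- forces the discrepancy to have exponential order at most $|H|-1<|H|$, hence to be negligible compared to $\dim_F I_{n+1}$.

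The main technical point, compared with the matrix case, is pinning down the value of $l$: here it depends on the order of the class $\overline{\prod_{h\in H}h}$ inside the abelian group $H/H'$, rather than being the universal $l=2$ that worked in the elementary setting. Nevertheless the entire argument is robust with respect to $l$ (only the polynomial prefactor in the $d_\lambda$-estimate changes), so this introduces no genuinely new obstacle beyond what was already resolved in \thmref{.glory}.
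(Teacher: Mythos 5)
Your overall architecture matches the paper's: reduce to $\dim_{F}\Tr(C_{n}^{H})\sim\dim_{F}I_{n+1}$, compute $H_{RI_{n}}=\sum_{\lambda}\left\langle s_{\lambda}(\chi(h)\,|\,h\in H),1\right\rangle s_{\lambda}$, sandwich the Schur coefficients of $RSI_{n}$ between those of $\mathcal{I}\cdot RI_{n}$ and $RI_{n}$, and kill the low partitions via the height bound and the $d_{\lambda}$-estimate from \cite{key-10}. Your remark that the shift $a_{\lambda+l\mu}=a_{\lambda}$ requires $\chi\bigl((\prod_{h\in H}h)^{l}\bigr)=1$ for every linear character $\chi$, so that $l$ depends on the order of the class of $\prod_{h\in H}h$ in $H/H'$, is a fair observation which the paper itself passes over quickly.

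The gap is that you never produce the element $\mathcal{I}$, and its existence is the entire new content of the fine case. In the elementary setting the inclusion $\mathcal{I}\cdot F[A^{\times n}]^{GL_{m}}\subseteq\Cent_{n}$ is Formanek's theorem, quoted from the literature; there is no off-the-shelf analog for a twisted group algebra, so ``translate the inclusions into Schur-coefficient inequalities'' presupposes exactly what has to be proved. The paper constructs $\mathcal{I}$ explicitly: a combinatorial lemma shows that the set $\Sigma_{\mathbf{h}}$ of reordered products $h_{\sigma(1)}\cdots h_{\sigma(k)}$ lies in a single $H'$-coset, is monotone under extending $\mathbf{h}$, and equals all of $H'$ for a suitable $\mathbf{h}_{0}=(h_{1},\dots,h_{n_{0}})$; then $\mathcal{I}=\det(\varepsilon_{i,h_{j}})$ is multilinear, so $F\mathcal{I}$ is the one-dimensional determinant representation, and for any monomial $T=\varepsilon_{i_{1},w_{1}}\cdots\varepsilon_{i_{k},w_{k}}\in RI_{n}$ one has $w_{1}\cdots w_{k}\in H'$, whence the concatenated tuple $(w_{1},\dots,w_{k},h_{1},\dots,h_{n})$ can be reordered to have product $e$, exhibiting each summand $Z_{\sigma}T$ of $\mathcal{I}T$ as the trace of a monomial in the generic elements, i.e.\ an element of $RSI_{n}$. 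Without this construction (or a substitute) the sandwich $\bar{a}_{\nu}\leq a_{\nu}'\leq a_{\nu}$ has no left-hand side and the argument does not get off the ground. You have also misplaced the difficulty: pinning down $l$ is bookkeeping, whereas producing an $\mathcal{I}$ that simultaneously spans a one-dimensional $GL$-module and satisfies $\mathcal{I}\cdot RI_{n}\subseteq RSI_{n}$ is the substantive step.
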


\section{\label{sec:General-case}General case}

Let $A$ be a $G$-simple finite dimensional algebra. By \thmref{Let--be},
$A=F^{\mu}H\otimes M_{m}(F)$ where $A_{g}=\sp\left\{ b_{h}\otimes e_{\gamma_{i}(s),\gamma_{j}(t)}|\, g=\gamma_{i}^{-1}h\gamma_{j}\right\} $
and $\mathfrak{g}=(\gamma_{1}^{m_{1}},...,\gamma_{k}^{m_{k}})$ is
a grading vector of $M_{m}(F)$. In the previous sections the strategy
was to consider the multilinear invariants of $F[A^{\times n}]$ under
the group of $G$-graded automorphisms of $A$ as ``approximating
spaces'' for the codimension sequence $c_{n}^{G}(A)$. Then we (1)
calculate asymptotically the dimension sequence of the approximating
spaces, and (2) show that the Hilbert series of the non-multilinear
invariants invariants is almost the same as the Hilbert series corresponding
to the relatively free algebra of $A$. Applying this method in the
same fashion to the general case is rather complicated. Instead we
combine the previous results to obtain a crude evaluation of $c_{n}^{G}(A)$. 

\selectlanguage{american}%
Let $A_{1}=F^{\mu}H,\widetilde{H}=\Aut_{G}(A_{1})$, $A_{2}=M_{m}(F)$
graded by the vector $\mathfrak{g}=(\gamma_{1}^{m_{1}},...,\gamma_{k}^{m_{k}})$
and $\overline{G^{\prime}}=GL_{m_{1}}(F)\times\cdots\times GL_{m_{k}}(F)\subseteq GL_{m}(F)$
which acts on $A_{2}$ by conjugation. 

First, we \uline{claim} that $\widetilde{H}\times\overline{G^{\prime}}\subseteq\Aut_{G}(A)$,
where the action is given by $(\chi,X)\cdot b_{h}\otimes M=\chi(h)b_{h}\otimes X^{-1}MX.$
Indeed, $\chi$ acts as an automorphism on $A_{1}$ and $X$ on $A_{2}$,
so it is clear that $(\chi,X)$ acts as an automorphism on $A=A_{1}\otimes A_{2}$.
Moreover, the result of acting with $(\chi,X)$ on $b_{h}\otimes e_{t(i),s(j)}\in A_{t^{-1}hs}$
(here $h\in H$ and $t,s\in B=\{\gamma_{1},...,\gamma_{k}\}$) is
\[
\chi(h)b_{h}\otimes\widehat{X}_{t,t}^{-1}e_{t(i),s(j)}\widehat{X}_{s,s}\in A_{t^{-1}hs}.
\]
Because the elements $b_{h}\otimes e_{t(i),s(j)}$ form a $G$-graded
basis for $A$, we deduce that the above action also fixes the $G$-grading. 

As a result of this claim, (see \secref{Motivation}), $C_{n}^{G}(A)$
is embedded inside $\Hom(A^{\otimes n},A)^{\widetilde{H}\times\overline{G^{\prime}}}$.
Consider the trace on $A$ given by $tr(b_{h}\otimes M)=tr_{A_{1}}(b_{h})\cdot tr_{A_{2}}(M).$
Since $tr_{A_{1}}$ is non-degenerate on $A_{1}$ and $\widetilde{H}$-invariant,
and $tr_{A_{2}}$ is non degenerate on $A_{2}$ and $\overline{G^{\prime}}$-invariant,
we conclude that $tr$ is non-degenerate on $A$ and $\widetilde{H}\times\overline{G^{\prime}}$-invariant.
Hence, as in \lemref{trace}, $tr$ induces an isomorphism of $F$-spaces
between $\Hom(A^{\otimes n},A)^{\widetilde{H}\times\overline{G^{\prime}}}$
and 
\begin{eqnarray*}
T_{n}^{\widetilde{H}\times\overline{G^{\prime}}}(A)=\Hom\left(A^{\otimes n},F\right)^{\widetilde{H}\times\overline{G^{\prime}}} & = & \Hom\left(A_{1}^{\otimes n}\otimes A_{2}^{\otimes n},F\right)^{\widetilde{H}\times\overline{G^{\prime}}}\\
 & = & \Hom(A_{1}^{\otimes n},F)^{\widetilde{H}}\otimes\Hom(A_{2}^{\otimes n},F)^{\overline{G^{\prime}}}.
\end{eqnarray*}
Therefore, by the results in the previous sections, we obtain the
\foreignlanguage{english}{upper bound:
\[
c_{n}^{G}(A)\leq\dim_{F}T_{n+1}^{\widetilde{H}\times\overline{G^{\prime}}}(A)\leq\dim_{F}T_{n+1}^{\widetilde{H}}(A_{1})\cdot\dim_{F}T_{n+1}^{\overline{G^{\prime}}}(A_{2})\sim\delta n^{\frac{1-\dim_{F}A_{e}}{2}}\left(|H|m^{2}\right)^{n}
\]
($\delta$ is some constant). }

\selectlanguage{english}%
Next, consider the algebra: 
\[
\mathcal{A}=A\otimes_{F}F\left[\varepsilon_{r,h},u_{t(i),s(j)}^{(r)}\,|\, r=1,2,...;\, h\in H;\, s,t\in B;\,1\leq i\leq m_{t};\,1\leq j\leq m_{s}\right]
\]
and the generic elements: 
\[
U_{r,g}=\sum_{t,s\in B;\, g=t^{-1}hs}\sum_{i,j}\varepsilon_{r,h}u_{t(i),s(j)}^{(r)}\cdot b_{h}\otimes e_{t(i),s(j)}\in\mathcal{A}.
\]

\begin{lem}
The map sending $x_{r,g}$ to $U_{r,g}$ induces an isomorphism of
$F$-vector spaces between $C_{n}^{G}(A)$ and $\mathbf{C}_{n}^{G}(A)=\sp\left\{ U_{\sigma(1),g_{\sigma(1)}}\cdots U_{\sigma(n),g_{\sigma(n)}}\thinspace|\,\sigma\in S_{n};\, g_{1},...,g_{n}\in G\right\} $.\end{lem}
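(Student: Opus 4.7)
The plan is to interpret the $U_{r,g}$'s as \emph{generic homogeneous elements} of $A$, so that the claim becomes the standard identification of the relatively free $G$-graded algebra with the algebra of $G$-graded generic elements, restricted to the multilinear component. Concretely, extend the $G$-grading from $A$ to $\mathcal{A}=A\otimes_{F}F[\varepsilon_{r,h},u^{(r)}_{t(i),s(j)}]$ by declaring every commuting variable to be homogeneous of degree $e\in G$. With this convention each summand $\varepsilon_{r,h}u^{(r)}_{t(i),s(j)}\cdot(b_{h}\otimes e_{t(i),s(j)})$ appearing in $U_{r,g}$ is homogeneous of degree $t^{-1}hs=g$, so $U_{r,g}\in\mathcal{A}_{g}$, and the ($F$-linear) assignment $x_{r,g}\mapsto U_{r,g}$ extends to a graded algebra homomorphism $\Phi:F\langle x_{r,g}\rangle\to\mathcal{A}$ whose image on multilinear monomials of length $n$ is by definition $\mathbf{C}_{n}^{G}(A)$. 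The target of the induced map on $C_{n}^{G}(A)$ is thus surjective by construction; what must be proved is that $\Phi$ descends to a well-defined map on $C_{n}^{G}(A)=P_{n}^{G}/(P_{n}^{G}\cap Id^{G}(A))$ and that this induced map is injective.

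Both statements reduce to a single calculation: for every multilinear $f=f(x_{1,g_{1}},\dots,x_{n,g_{n}})\in P_{n}^{G}$, I would expand
\[
f(U_{1,g_{1}},\dots,U_{n,g_{n}})=\sum_{(h_{r},t_{r}(i_{r}),s_{r}(j_{r}))_{r=1}^{n}}\Bigl(\prod_{r=1}^{n}\varepsilon_{r,h_{r}}u^{(r)}_{t_{r}(i_{r}),s_{r}(j_{r})}\Bigr)\cdot f\bigl(b_{h_{1}}\otimes e_{t_{1}(i_{1}),s_{1}(j_{1})},\dots,b_{h_{n}}\otimes e_{t_{n}(i_{n}),s_{n}(j_{n})}\bigr),
\]
where the outer sum runs over admissible tuples, i.e.\ those with $t_{r}^{-1}h_{r}s_{r}=g_{r}$ for all $r$. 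The key observation is that distinct admissible tuples yield distinct monomials in the commutative polynomial ring: the variables $\varepsilon_{r,h}$ (respectively $u^{(r)}_{t(i),s(j)}$) are indexed faithfully by $h$ (respectively by $(t(i),s(j))$), and the superscript $r$ separates the factors coming from different positions. Consequently the displayed expansion is a sum in a free $A$-module with pairwise $F$-linearly independent monomial coefficients.

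From this the two required facts follow immediately. If $f\in Id^{G}(A)$ then every admissible evaluation $f(b_{h_{1}}\otimes e_{t_{1}(i_{1}),s_{1}(j_{1})},\dots)$ vanishes, hence $f(U_{1,g_{1}},\dots,U_{n,g_{n}})=0$; so $\Phi$ descends to $C_{n}^{G}(A)$. Conversely, if $f(U_{1,g_{1}},\dots,U_{n,g_{n}})=0$, the linear independence of the monomials forces every admissible evaluation on the $G$-graded basis $\{b_{h}\otimes e_{t(i),s(j)}\}$ of $A$ to be zero; by multilinearity, $f$ vanishes on all admissible evaluations, so $f\in Id^{G}(A)$ and $f=0$ in $C_{n}^{G}(A)$. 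Combining surjectivity with this injectivity gives the desired $F$-vector space isomorphism $C_{n}^{G}(A)\cong\mathbf{C}_{n}^{G}(A)$.

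The only mildly non-routine point, and thus the potential obstacle, is the verification that the coefficient monomials $\prod_{r}\varepsilon_{r,h_{r}}u^{(r)}_{t_{r}(i_{r}),s_{r}(j_{r})}$ attached to distinct admissible tuples are indeed $F$-linearly independent in $F[\varepsilon_{r,h},u^{(r)}_{t(i),s(j)}]$. This is a bookkeeping check: for a fixed position $r$ the product $\varepsilon_{r,h_{r}}u^{(r)}_{t_{r}(i_{r}),s_{r}(j_{r})}$ determines $h_{r}$ and $(t_{r}(i_{r}),s_{r}(j_{r}))$ uniquely, because these variables belong to disjoint families of indeterminates and distinct indices give distinct variables. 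Once this is in place, the proposal is complete.
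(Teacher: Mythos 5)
Your argument is correct and is essentially the paper's own proof: both expand $f(U_{1,g_{1}},\dots,U_{n,g_{n}})$ over admissible tuples, observe that the commutative coefficient monomials $\prod_{r}\varepsilon_{r,h_{r}}u^{(r)}_{t_{r}(i_{r}),s_{r}(j_{r})}$ attached to distinct tuples are distinct (hence linearly independent), and deduce well-definedness and injectivity from the vanishing of all admissible basis evaluations together with multilinearity. Your explicit justification of the linear-independence bookkeeping is a slightly fuller version of the paper's one-line remark, but the route is the same.
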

\begin{proof}
Since $\mathcal{A}$ is a scalar extension of $A$ by a commutative
$F$-algebra, the map induces a map from $C_{n}^{G}(A)$ to $\mathbf{C}_{n}^{G}(A)$.
Moreover, if $f(U_{1,g_{1}},...,U_{n.g_{n}})=0$, where $f=f(x_{1,g_{1}},...,x_{n,g_{n}})$
is a $G$-graded multilinear polynomial. Then,
\[
0=\sum\varepsilon_{1,h_{1}}\cdots\varepsilon_{n,h_{n}}u_{t_{1}(i_{1}),s_{1}(j_{1})}^{(1)}\cdots u_{t_{n}(i_{n}),s_{n}(j_{n})}^{(n)}f(b_{h_{1}}\otimes e_{t_{1}(i_{1}),s_{1}(j_{1})},...,b_{h_{n}}\otimes e_{t_{n}(i_{n}),s_{n}(j_{n})}),
\]
where the sum is over all suitable $h_{1},...,h_{n};s_{1},...,s_{n};t_{1},...,t_{n};i_{1},...,i_{n}$
and $j_{1},...,j_{n}$. Since the coefficients of different $f(b_{h_{1}}\otimes e_{t_{1}(i_{1}),s_{1}(j_{1})},...,b_{h_{n}}\otimes e_{t_{n}(i_{n}),s_{n}(j_{n})})$
are also different, we get that all $f(b_{h_{1}}\otimes e_{t_{1}(i_{1}),s_{1}(j_{1})},...,b_{h_{n}}\otimes e_{t_{n}(i_{n}),s_{n}(j_{n})})$
involved are zero. This shows, since $f$ is multilinear, that $f$
is an identity of $A$. Hence, the map in the lemma is injective.
Because it is clearly onto , we are done.
\end{proof}
To proceed we assume the following two assumption:
\begin{enumerate}
\item $\gamma_{1}=e$.
\item $H\gamma_{1},...,H\gamma_{k}$ are distinct $H$-cosets.
\end{enumerate}
(see in \cite{ELI+Darrel} that there is no loss of generality.) Therefore,
for a given $g\in G$ and $t\in B$ there could be no more than one
pair $(h,s)\in H\times B$ for which $g=t^{-1}hs$. Indeed, since
$hs=tg$, $hs$ must be the unique way to write the element $tg$
as a left $H$-coset. 
\begin{rem}
\label{rem:simplier_multi}As a result, if $I_{t}=\sum_{i}e_{t(i),t(i)}$
(here $t\in B$), then $I_{t}\cdot U_{r,g}=\sum_{i,j}\varepsilon_{r,h}u_{t(i),s(j)}^{(r)}\cdot b_{h}\otimes e_{t(i),s(j)}$,
where $g=t^{-1}hs$ (if there are no such $s\in B$ and $h\in H$,
the expression is zero). 

Furthermore, 
\[
I_{t}\cdot U_{r_{1},g_{1}}\cdots U_{r_{n},g_{n}}=\sum\varepsilon_{r_{1},h_{1}}\cdots\varepsilon_{r_{n},h_{n}}u_{t_{1}(i_{1}),t_{2}(i_{2})}^{(r_{1})}\cdots u_{t_{n}(i_{n}),t_{n+1}(i_{n+1})}^{(r_{n})}\cdot b_{h_{1}\cdots h_{n}}\otimes e_{t(i_{1}),t_{n+1}(j_{n+1})},
\]
where the sum is taken over all suitable $i_{1},...,i_{n+1}$ and
$t_{1},...,t_{n+1}\in B;\, h_{1},...,h_{n}\in H$ satisfy:
\[
t_{1}=t;g_{1}=t_{1}^{-1}ht_{2};...;g_{n}=t_{n}^{-1}h_{n}t_{n+1}.
\]

\end{rem}
\selectlanguage{american}%
It is clear that $\sp\left\{ U_{n+1,e}U_{\sigma(1),g_{1}}\cdots U_{\sigma(n),g_{n}}\,|\,\sigma\in S_{n},\,\,(g_{1},...,g_{n})\in G^{n}\right\} $
is contained in $C_{n+1}^{G}(A)$. Moreover, we may project the above
spaces \textbf{onto }$C_{e,n}^{G}(A)=\sp\left\{ I_{e}U_{\sigma(1),g_{1}}\cdots U_{\sigma(n),g_{n}}\,|\,\sigma\in S_{n},\,\,(g_{1},...,g_{n})\in G^{n}\right\} $,
where $I_{e}=\sum_{i}e_{e(i),e(i)}$ (recall that $e=\gamma_{1}\in B$).
Therefore, $\dim_{F}C_{e,n}^{G}(A)\leq\dim_{F}C_{n+1}^{G}(A)$. By
\lemref{trace}, 
\[
\psi(f)=tr(f\cdot\sum_{g\in G}U_{n+1,g})\in\Hom(A^{\otimes(n+1)},A),
\]
where $f\in C_{e,n}^{G}(A)$, is an embedding. Denote the image of
this map by $QI_{n}$. 

Now, choose $(h_{2},...,h_{n})\in H^{n-1}$, $\{g_{2},...,g_{n}\}\in B^{n-1}$
and a cyclic permutation $\sigma=(\sigma_{n+1}=\sigma_{2}\,...\,\sigma_{n})\in S_{\{2,...,n\}}$.
The element 
\[
\varepsilon_{1,e}\varepsilon_{2,h_{2}}\cdots\varepsilon_{n,h_{n}}\varepsilon_{n+1,h_{n+1}}\cdot\sum_{i_{0},...,i_{n}}u_{e(i_{0}),g_{\sigma_{2}}(i_{\sigma_{2}})}^{(1)}u_{g_{2}(i_{2}),g_{\sigma(2)}(i_{\sigma(2)})}^{(2)}\cdots u_{g_{n}(i_{n}),g_{\sigma(n)}(i_{\sigma(n)})}^{(n)}u_{g_{\sigma_{n}}(i_{\sigma_{n}}),e(i_{0})}^{(n+1)},
\]
where $h_{n+1}=(h_{\sigma_{2}}\cdots h_{\sigma_{n}})^{-1}$ , is inside
$QI_{n}$. Indeed, using \remref{simplier_multi}, it is not hard
to verify that the above expression is the image by $\psi$ of 
\[
I_{e}\cdot U_{1,g_{\sigma_{2}}}\cdot U_{\sigma_{2},g_{\sigma_{2}}^{-1}h_{\sigma_{2}}g_{\sigma_{3}}}\cdots U_{\sigma_{n},g_{\sigma_{n}}^{-1}h_{\sigma_{n}}g_{\sigma_{n+1}}}.
\]
Finally, by evaluating $\varepsilon_{1,e}=\varepsilon_{n+1,h}=u_{e(i),t(j)}^{(1)}=u_{s(j^{\prime}),e(i)}^{(n+1)}=1$
for all $h\in H,s,t\in B$ and suitable $i,j,j^{\prime}$, we obtain
a linear space with even smaller dimension containing all the elements
of the form 
\[
\varepsilon_{2,h_{2}}\cdots\varepsilon_{n,h_{n}}\cdot\sum_{i_{2},...,i_{n}}u_{g_{2}(i_{2}),g_{\sigma(2)}(i_{\sigma(2)})}^{(2)}\cdots u_{g_{n}(i_{n}),g_{\sigma(n)}(i_{\sigma(n)})}^{(n)}.
\]
The dimension is at least $|H|^{n-1}\cdot\dim_{F}SI_{n-1}(A_{2})\sim\beta n^{\frac{1-\dim_{F}A_{e}}{2}}\left(|H|m^{2}\right)^{n}$
for some constant $\beta$ (see \secref{Hilbert-series}).

\selectlanguage{english}%
Thus, by a result of the second author claiming that $c_{n}^{G}(A)\sim\alpha n^{-b}d^{n}$
for some constants $\alpha,b,d$, (see \cite{Shpigelman}), we conclude:
\begin{thm*}[\textbf{B}]
For every finite dimensional $G$-simple algebra $A$, there is a
constant $\alpha$ such that
\end{thm*}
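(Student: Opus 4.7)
The plan is to reduce Theorem B to the two special cases already handled, by sandwiching $c_{n}^{G}(A)$ between matching upper and lower bounds of the desired order $n^{\frac{1-\dim_{F}A_{e}}{2}}(\dim_{F}A)^{n}$. By \thmref{Let--be}, I may assume $A=A_{1}\otimes A_{2}$ with $A_{1}=F^{\mu}H$ and $A_{2}=M_{m}(F)$ carrying an elementary grading by a vector $\mathfrak{g}=(\gamma_{1}^{m_{1}},\ldots,\gamma_{k}^{m_{k}})$, and that $\gamma_{1}=e$ with $H\gamma_{1},\ldots,H\gamma_{k}$ pairwise distinct $H$-cosets (no loss of generality by \cite{ELI+Darrel}). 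The final ingredient is a theorem of the second author (see \cite{Shpigelman}) guaranteeing an asymptotic of the form $c_{n}^{G}(A)\sim\alpha\, n^{-b}d^{n}$ for some constants $\alpha>0$, half-integer $b$, and positive integer $d$. Since $d=\exp^{G}(A)=\dim_{F}A=|H|m^{2}$ is already known, only the exponent $b$ must be identified, and matching upper and lower bounds of the right order force $b=-\tfrac{1-\dim_{F}A_{e}}{2}$ and in particular yield the existence of $\alpha$.

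For the upper bound, I would first verify that $\widetilde{H}\times\overline{G'}$ embeds in $\Aut_{G}(A)$ via $(\chi,X)\cdot(b_{h}\otimes M)=\chi(h)b_{h}\otimes X^{-1}MX$, and that the bilinear form $tr(b_{h}\otimes M)=tr_{A_{1}}(b_{h})\cdot tr_{A_{2}}(M)$ is non-degenerate and $\widetilde{H}\times\overline{G'}$-invariant. By \lemref{trace}, $C_{n}^{G}(A)$ then embeds in
\[
\Hom(A^{\otimes n},A)^{\widetilde{H}\times\overline{G'}}\;\cong\;T_{n+1}^{\widetilde{H}}(A_{1})\otimes T_{n+1}^{\overline{G'}}(A_{2}).
\]
Combining the fine-grading dimension estimate from \secref{Fine} with the elementary-grading estimate from \corref{codim-asymp} produces $c_{n}^{G}(A)\le\delta\, n^{\frac{1-\dim_{F}A_{e}}{2}}(|H|m^{2})^{n}$ for some $\delta>0$.

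For the lower bound, I would use the generic elements $U_{r,g}\in\mathcal{A}$ introduced in \secref{General-case}. Inside $C_{n+1}^{G}(A)$, I single out the subspace spanned by the monomials $U_{n+1,e}U_{\sigma(1),g_{1}}\cdots U_{\sigma(n),g_{n}}$, project onto the $e$-block subspace $C_{e,n}^{G}(A)$, and apply the trace embedding $f\mapsto tr\!\left(f\cdot\sum_{g\in G}U_{n+1,g}\right)$ into $\Hom(A^{\otimes(n+1)},F)$, with image $QI_{n}$. Using \remref{simplier_multi} to expand the generic products and then specializing $\varepsilon_{1,e},\varepsilon_{n+1,h}$ and the matrix coordinates $u^{(1)}_{e(i),t(j)},u^{(n+1)}_{s(j'),e(i)}$ to $1$, I obtain a spanning set that factors as a fine contribution of dimension $|H|^{n-1}$ tensored with a special-invariant contribution of dimension $\dim_{F}SI_{n-1}(A_{2})$. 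By \thmref{.glory} and \corref{codim-asymp} this gives $c_{n+1}^{G}(A)\ge\beta\, n^{\frac{1-\dim_{F}A_{e}}{2}}(|H|m^{2})^{n}$ for some $\beta>0$.

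Putting the three ingredients together, Shpigelman's asymptotic $\alpha n^{-b}d^{n}$ is squeezed between two sequences of order $n^{\frac{1-\dim_{F}A_{e}}{2}}(|H|m^{2})^{n}$, forcing $b$ and $d$ to the claimed values and establishing Theorem B. The main technical obstacle is the lower bound: one must verify that after the projection onto $C_{e,n}^{G}(A)$, the subsequent trace embedding, and the specialization of variables, no collapse occurs between the fine and elementary data. Here the hypothesis that the $H\gamma_{i}$ are distinct $H$-cosets is crucial, since it ensures that for each $g\in G$ and $t\in B$ there is at most one pair $(h,s)$ with $g=t^{-1}hs$; this decouples the $\varepsilon$-variables from the $u$-variables after specialization and lets the dimensions of the fine and elementary factors genuinely multiply. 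Once this decoupling is justified, the rest of the argument is a bookkeeping reduction to results already proved in sections \ref{sec:Hilbert-series}, \ref{sec:SI=00003DC} and \ref{sec:Fine}.
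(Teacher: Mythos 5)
Your proposal is correct and follows essentially the same route as the paper: the upper bound via the $\widetilde{H}\times\overline{G'}$-invariants of the tensor-product trace form, the lower bound via the generic elements $U_{r,g}$, the projection onto $C_{e,n}^{G}(A)$ and the specialization argument (including the key role of the distinctness of the cosets $H\gamma_{i}$), and the final squeeze against Shpigelman's asymptotic form. No substantive differences from the paper's own argument.
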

\[
c_{n}^{G}(A)\sim\alpha n^{\frac{1-\dim_{F}A_{e}}{2}}\left(\dim_{F}A\right)^{n}.
\]

\section{Example}

In this section we demonstrate that the asymptotics of the codimension
sequence is incapable of distinguishing between two non-isomorphic
$G$-gradings. In other words, we address the question whether different
asymptotics of the codimension sequence implies different $G$-gradings.
Without any assumptions on the $G$-gradings, it is rather easy to
construct trivial counterexamples even with the same codimension sequence.
For instance, take any $H$-graded algebra $A$ and consider the following
$G=H\times H$-gradings: $A_{(h,t)}^{(1)}=\begin{cases}
A_{h} & ,t=e\\
0 & \mbox{,otherwise}
\end{cases}$ and $A_{(t,h)}^{(2)}=\begin{cases}
A_{h} & ,t=e\\
0 & \mbox{,otherwise}
\end{cases}$. 

Therefore, we seek after two $G$-graded algebras $A^{(1)}$ and $A^{(2)}$
having the same $G$-graded codimension asymptotics, but 
\begin{itemize}
\item $\mbox{supp}_{G}A^{(1)}=\mbox{supp}_{G}A^{(2)}=G$ (recall that $\mbox{supp}_{G}A$
is the subgroup of $G$ generated by all $g\in G$ for which $A_{g}\neq0$).
\item There is no $F$-algebra isomorphism $\phi:A^{(1)}\rightarrow A^{(2)}$
and group automorphism $\psi:G\to G$ such that $\phi(A_{g}^{(1)})=A_{\psi(g)}^{(2)}$
for every $g\in G$ (note that in particular we demand here that the
algebras are not $G$-isomorphic).
\end{itemize}
Using \thmref{A} we are able to give $A^{(1)}$ and $A^{(2)}$ satisfying
these restrictions. Let $G=D_{3}=\left\langle s,r|s^{2}=e,r^{3}=e,srs=r^{-1}\right\rangle $
the dihedral group of order 6 and $A=M_{6}(F)$. Consider $A^{\mathfrak{g}}$
and $A^{\mathfrak{h}}$-two elementary gradings of $A$ corresponding
to the vectors $\mathfrak{g}=\left(e,e,e,s,s,r\right)$ and $\mathfrak{h}=\left(e,e,e,r,r,s\right)$.
That is,
\[
A^{\mathfrak{g}}=\begin{pmatrix}e & e & e & s & s & r\\
e & e & e & s & s & r\\
e & e & e & s & s & r\\
s & s & s & e & e & sr\\
s & s & s & e & e & sr\\
r^{2} & r^{2} & r^{2} & sr & sr & e
\end{pmatrix};\,\,\, A^{\mathfrak{h}}=\begin{pmatrix}e & e & e & r & r & s\\
e & e & e & r & r & s\\
e & e & e & r & r & s\\
r^{2} & r^{2} & r^{2} & e & e & sr\\
r^{2} & r^{2} & r^{2} & e & e & sr\\
s & s & s & sr & sr & e
\end{pmatrix}
\]

This example satisfies the two restriction mentioned above: By definition
$A_{s}^{\mathfrak{g}},A_{r}^{\mathfrak{g}}\ne\{0\}$ and $A_{s}^{\mathfrak{h}},A_{r}^{\mathfrak{h}}\ne\{0\}$,
thus the support of both gradings is $G$. Moreover, it is easy to
see that $\dim_{F}A_{s}^{\mathfrak{g}}=12$, but there is no component
in the other grading of dimension $12$, thus there are no $\phi$
and $\psi$ such that $\phi\left(A_{s}^{\mathfrak{g}}\right)=A_{\psi(s)}^{\mathfrak{h}}$.
Finally, note that in both gradings $m_{1}=3$, $m_{2}=2$ and $m_{3}=1$,
thus $H_{\mathfrak{g}}=H_{\mathfrak{h}}=\{e\}$. Hence by \thmref{A}
the asymptotics of the two $G$-graded codimension sequences is exactly
\[
\alpha n^{-\frac{13}{2}}6^{2n},
\]
where
\[
\alpha=\frac{6^{9}}{2^{6}\sqrt{3\left(2\pi\right)^{5}}}.
\]

\end{document}